\documentclass[a4paper]{article}%
\usepackage{amsfonts}
\usepackage{amsmath}
\usepackage{amssymb}
\usepackage{amstext}
\usepackage{graphicx}
\usepackage[a4paper]{geometry}
\usepackage[page]{appendix}
\usepackage{colortbl}%
\setcounter{MaxMatrixCols}{30}
%TCIDATA{OutputFilter=latex2.dll}
%TCIDATA{Version=5.50.0.2960}
%TCIDATA{CSTFile=40 LaTeX article.cst}
%TCIDATA{Created=Friday, December 23, 2011 17:56:44}
%TCIDATA{LastRevised=Friday, April 01, 2022 13:05:55}
%TCIDATA{<META NAME="GraphicsSave" CONTENT="32">}
%TCIDATA{<META NAME="SaveForMode" CONTENT="1">}
%TCIDATA{BibliographyScheme=Manual}
%TCIDATA{<META NAME="DocumentShell" CONTENT="Standard LaTeX\Blank - Standard LaTeX Article">}
%TCIDATA{Language=American English}
%BeginMSIPreambleData
\providecommand{\U}[1]{\protect\rule{.1in}{.1in}}
%EndMSIPreambleData
\geometry{left=2cm,right=2cm,top=2cm,bottom=2cm}
\newtheorem{theorem}{Theorem}[section]

\newtheorem{corollary}[theorem]{Corollary}

\newtheorem{lemma}[theorem]{Lemma}

\newtheorem{remark}[theorem]{Remark}

\newenvironment{proof}[1][Proof]{\noindent\textbf{#1.} }{\ \rule{0.5em}{0.5em}}
\numberwithin{equation}{section}
\begin{document}

\title{On the Origin of Minnaert Resonances}
\author{Andrea Mantile\\Laboratoire de Math\'{e}matiques de Reims, UMR9008 CNRS,\\Universit\'{e} de Reims Champagne-Ardenne, Moulin de la Housse\\BP 1039, 51687 Reims, France
\and Andrea Posilicano\\DiSAT, Sezione di Matematica, Universit\`a dell'Insubria, via Valleggio 11,
I-22100 Como, Italy
\and Mourad Sini\\RICAM, Austrian Academy of Sciences, Altenbergerstrasse 69, A-4040, Linz, Austria}
\date{}
\maketitle

\begin{abstract}
It is well known that the presence, in a homogeneous acoustic medium, of a
small inhomogeneity (of size $\varepsilon$), enjoying a high contrast of both
its mass density and bulk modulus, amplifies the generated total fields. This
amplification is more pronounced when the incident frequency is close to the
Minnaert frequency $\omega_{M}$. Here we explain the origin of such a
phenomenon: at first we show that the scattering of an incident wave of
frequency $\omega$ is described by a self-adjoint $\omega$-dependent
Schr\"{o}dinger operator with a singular $\delta$-like potential supported at
the inhomogeneity interface. Then we show that, in the low energy regime
(corresponding in our setting to $\varepsilon\ll1$) such an operator has a
non-trivial limit (i.e., it asymptotically differs from the Laplacian) if and
only if $\omega=\omega_{M}$. The limit operator describing the non-trivial
scattering process is explicitly determined and belongs to the class of point
perturbations of the Laplacian. When the frequency of the incident wave
approaches $\omega_{M}$, the scattering process undergoes a transition between
an asymptotically trivial behaviour and a non-trivial one.

\end{abstract}

\section{Introduction}

Models related to the wave propagation in the presence of small scaled but
highly contrasted inhomogeneities appear in different areas of applied
sciences as in acoustics, electromagnetism and elasticity where these
inhomogeneities model micro-bubbles, nano-particles and micro- or
nano-inclusions, respectively. There is a critical ratio between the size and
the contrast of the inhomogeneities under which the generated fields can be
drastically enhanced. This enhancement has tremendous applications in imaging,
in the broad sense, and material sciences, to cite a few. It has been observed
and quantified in the stationary regimes for certain values of the incident
frequencies (see, e.g., \cite{Dev}) which are referred to as resonances. The
purpose of our work is understanding the origin of such particular
frequencies, enlightening the mechanism which leads to their emergence.

To study this question, we consider the stationary acoustic wave propagation
in the presence of micro-bubbles. We deal with a linear model described by the
mass density and the bulk modulus see \cite{CaMiPaTi 85}-\cite{CaMiPaTi 86}.
When the background medium is homogeneous, with constant mass density
$\rho_{0}$ and bulk modulus $k_{0}$, and the micro-bubble has shape
$\Omega^{\varepsilon}$, with diameter $\varepsilon$ of about few tens of
micrometers, mass density $\rho_{\varepsilon}$ and bulk modulus
$k_{\varepsilon}$, then the resonant frequencies are expected to appear in the
following asymptotic regimes:

\begin{itemize}
\item \emph{Low density / low bulk bubble}, characterized by the
small-$\varepsilon$ behaviour: $\rho_{\varepsilon}/\rho_{0}\sim k_{\varepsilon
}/k_{0}\sim\varepsilon^{-r}$ with $r>0$. In this regime the relative speed of
propagation: $c_{\varepsilon}^{2}/c_{0}^{2}:=\frac{\rho_{\varepsilon}%
}{k_{\varepsilon}}\,\frac{\rho_{0}}{k_{0}}\sim1$ is moderate, but the contrast
of the transmission coefficient is large as $\varepsilon\ll1$.

\item \emph{Moderate density / low bulk bubble,} defined by $\rho
_{\varepsilon}/\rho_{0}\sim1$ and $k_{\varepsilon}/k_{0}\sim\varepsilon^{-r}$,
$r>0$, as $\varepsilon\ll1$. These properties mean that the relative speed of
propagation is small. But the contrast of the transmission coefficient is
moderate. Such bubbles are not known to exist in nature but they might be
designed, see \cite{Z-F-2018}.
\end{itemize}

These configurations give rise to different types of resonant frequencies. We
classify them as follows:

\begin{itemize}
\item The\emph{\ Minnaert resonance}, which corresponds to a surface-mode for
the low density / low bulk bubbles.

\item The \emph{body resonances}, which correspond to body-modes for the
moderate density / low bulk bubbles.
\end{itemize}

The size of such resonances depends on the value of $r$. In particular, they
are very large when $r<2$ and very small when $r>2$, in terms of the relative
diameter $\varepsilon$, $\varepsilon\ll1$. However, when $r=2$, they are
moderate and their dominant parts are independent of $\varepsilon$. In what
follows we present the general setting of our problem and provide a
qualitative argument showing how these resonances indeed appear.

\subsection{Resonant frequencies generated by a micro-bubble}

Let $\Omega\subset\mathbb{R}^{3}$ be an open bounded and connected domain with
a smooth boundary\footnote{In most of the computations, the Lipschitz
regularity is enough, and all the results here presented hold with a boundary
of class $\mathcal{C}^{1,1}$. However, to avoid too many technicalities, we
prefer to work with a smooth boundary.
%keep the presentation The only place where we need more regularity of
%$\Gamma$ is in the characterization of the spectrum of the double
%layer operator. We refer to Section \ref{Sec_K} for this point.
} $\Gamma:=\partial\Omega$. We define the contracted domain
\begin{equation}
\Omega^{\varepsilon}:=\left\{  x:x=y_{0}+\varepsilon\,\left(  y-y_{0}\right)
\,,\ y\in\Omega\right\}  \label{Intro_Omega_eps}%
\end{equation}
and denote with $\Gamma^{\varepsilon}:=\partial\Omega^{\varepsilon}$ its
boundary. The acoustic medium is defined by the density $\rho$ and the bulk
$k$ both discontinuous across $\Gamma^{\varepsilon}$
\begin{equation}
\rho:=%
\begin{cases}
\rho_{\varepsilon}\quad\text{inside }\Omega^{\varepsilon}\,,\\
\rho_{0}\quad\text{outside }\Omega^{\varepsilon}\,,
\end{cases}
\quad\text{and}\qquad k:=%
\begin{cases}
k_{\varepsilon}\quad\text{inside }\Omega^{\varepsilon}\,,\\
k_{0}\quad\text{outside }\Omega^{\varepsilon}\,.
\end{cases}
\label{acoustic_coeff}%
\end{equation}
Let $u^{\mathrm{in}}(x,\omega,\theta):=e^{i\omega\sqrt{\rho_{0}/k_{0}}%
\;x\cdot\theta}$ be the incident plane wave, propagating in the direction
$\theta$, and $\nu$ be the exterior unit normal to $\Gamma^{\varepsilon}$. The
scattering of $u^{\mathrm{in}}$ by the medium perturbation introduced in
(\ref{acoustic_coeff}) is described by the boundary value problem (see
\cite{AmmChaChouSi}, \cite{Amm 1})%
\begin{equation}%
\begin{cases}
\left(  \nabla\cdot\frac{1}{\rho}\nabla+\omega^{2}\frac{1}{k}\right)
u=0\,,\quad\text{in }\mathbb{R}^{3}\backslash\Gamma^{\varepsilon}\,,\\
\\
\left.  u\right\vert _{\mathrm{in}}=\left.  u\right\vert _{\mathrm{ex}%
}\,,\quad\left.  \frac{1}{\rho_{\varepsilon}}\nu\cdot\nabla u\right\vert
_{\mathrm{in}}=\left.  \frac{1}{\rho_{0}}\nu\cdot\nabla u\right\vert
_{\mathrm{ex}}\,,\quad\text{at }\Gamma^{\varepsilon}\,,\\
\\
u^{\mathrm{sc}}:=u-u^{\mathrm{in}}\,,\quad\frac{\partial u^{\mathrm{sc}}%
}{\partial|x|}-i\omega\sqrt{\frac{\rho_{0}}{k_{0}}}\;u^{\mathrm{sc}}=o\left(
|x|^{-1}\right)  \,,\quad\text{as }|x|\rightarrow+\infty\,,
\end{cases}
\label{acoustic_scattering_eq_0}%
\end{equation}
where $\left.  f\right\vert _{\mathrm{in}/\mathrm{ex}}$ denote the lateral
traces on $\Gamma^{\varepsilon}$. It is worth noticing that these interface
conditions provide with the regularity of the total field $u=u^{\mathrm{sc}%
}+u^{\mathrm{in}}$ across the boundary: we further refer to \cite{CaMiPaTi
85}-\cite{CaMiPaTi 86} for the physical setting justifying
(\ref{acoustic_scattering_eq_0}). Denoting with $\mathcal{K}_{\omega}^{0}$ the
Green's function of the background medium $(\rho_{0},k_{0})$ satisfying the
outgoing Sommerfeld radiation conditions and with%
\begin{equation}
\alpha:=\frac{1}{\rho_{\varepsilon}}-\frac{1}{\rho_{0}}\,,\qquad\beta
:=\frac{1}{k_{\varepsilon}}-\frac{1}{k_{0}}\,, \label{the contrasts}%
\end{equation}
the contrasts between the inner and the outer acoustic coefficients, by the
Lippmann-Schwinger representation of the total acoustic field $u$ we have
\begin{equation}
u(x)-\alpha\,\nabla_{x}\cdot\int_{\Omega^{\varepsilon}}\mathcal{G}_{\omega
}^{0}(x-y)\nabla u(y)dy-\beta\omega^{2}\int_{\Omega^{\varepsilon}}%
\mathcal{G}_{\omega}^{0}(x-y)u(y)dy=u^{\mathrm{in}}(x)\,.
\end{equation}
An integration by parts allows to transform this integro-differential equation
into a solely integral equation (see detailed computations in \cite[Sec.
3]{DaGhanSi})
\begin{equation}
u(x)-\left(  \beta-\alpha\rho_{\varepsilon}/k_{\varepsilon}\right)  \omega
^{2}\int_{\Omega^{\varepsilon}}\mathcal{G}_{\omega}^{0}(x-y)u(y)dy+\alpha
\int_{\Gamma^{\varepsilon}}\mathcal{G}_{\omega}^{0}(x-y)\frac{\partial
u}{\partial\nu}(y)dy=u^{\mathrm{in}}(x)\,. \label{int_eq}%
\end{equation}
We next rephrase this problem using the Newtonian (volume-type) operator
\begin{equation}
N_{\omega}(\varepsilon):L^{2}({\Omega^{\varepsilon}})\rightarrow L^{2}%
({\Omega^{\varepsilon}}),\qquad(N_{\omega}(\varepsilon)u)(x):=\int%
_{\Omega^{\varepsilon}}\mathcal{G}_{\omega}^{0}(x-y)u(y)dy\,,
\end{equation}
with image in $H^{2}({\Omega^{\varepsilon}})$, and the surface-type
operator\footnote{Here $p.v$ refers to the Cauchy principal value.}%
\begin{equation}
K_{\omega}^{\ast}(\varepsilon):H^{-1/2}(\Gamma^{\varepsilon})\rightarrow
H^{-1/2}(\Gamma^{\varepsilon}),\qquad(K_{\omega}^{\ast}(\varepsilon
)\varphi)(x):=p.v.\int_{\Gamma^{\varepsilon}}\frac{\partial}{\partial\nu_{x}%
}\mathcal{G}_{\omega}^{0}(x-y)\varphi(y)dy\,.
\end{equation}
The notation adopted is justified by the fact that $K_{\omega}^{\ast
}(\varepsilon)$ identifies with the $L^{2}(\Gamma^{\varepsilon})$-adjoint of
the well known Neumann-Poincar\'{e} operator (see Subsection \ref{Sec_K}).
Taking the normal derivative (here simply denoted with $\partial_{\nu}$) and
trace on $\Gamma^{\varepsilon}$, from (\ref{int_eq}) we obtain the surface
integral equation%
\begin{equation}
\left(  1+\frac{\alpha}{2}\right)  \partial_{\nu}u-\left(  \beta-\alpha
\rho_{\varepsilon}/k_{\varepsilon}\right)  \omega^{2}\partial_{\nu}N_{\omega
}(\varepsilon)u+\alpha\,K_{\omega}^{\ast}(\varepsilon)\left(  \partial_{\nu
}u\right)  =\partial_{\nu}u^{\mathrm{in}}\,,\quad\text{at }\Gamma
^{\varepsilon}\,.
\end{equation}
Hence, the total acoustic field in the exterior of the bubble $\mathbb{R}%
^{3}\backslash\overline{{\Omega^{\varepsilon}}}$ is fully computable from the
values $u|\Omega^{\varepsilon}$ and $\partial_{\nu}u|_{\mathrm{in}}$ which are
solutions of the following closed-form system of integral equations%
\begin{equation}
\left[  I-\left(  \beta-\alpha\rho_{\varepsilon}/k_{\varepsilon}\right)
\omega^{2}N_{\omega}(\varepsilon)\right]  u+\alpha\int_{\Gamma^{\varepsilon}%
}\mathcal{G}_{\omega}^{0}(x-y)\frac{\partial u}{\partial\nu}(y)\,d\sigma
(y)=u^{\mathrm{in}}(x)\,,\qquad\text{in }{\Omega^{\varepsilon}\,,}
\label{LS_eq_1}%
\end{equation}%
\begin{equation}
\left[  \frac{1}{\alpha}+\frac{1}{2}+K_{\omega}^{\ast}\left(  \varepsilon
\right)  \right]  \partial_{\nu}u-\frac{\left(  \beta-\alpha\rho_{\varepsilon
}/k_{\varepsilon}\right)  }{\alpha}\,\omega^{2}\partial_{\nu}N_{\omega
}(\varepsilon)u=\frac{1}{\alpha}\,\partial_{\nu}u^{\mathrm{in}}\,,\qquad
\text{at }\Gamma^{\varepsilon}\,. \label{LS_eq_2}%
\end{equation}
The Newtonian and the Neumann-Poincar\'{e} operators appearing above can be
similarly defined on the dilated domain $\Omega$; let us simply denote them as
$N_{\omega}$ and $K_{\omega}^{\ast}$ in this case. When $\omega=0$, each of
the operators $N_{0}$ and $K_{0}^{\ast}$ generates discrete sequences of
eigenvalues%
\begin{equation}
\sigma_{p}\left(  N_{0}\right)  =\left\{  \lambda_{m}\right\}  _{m\in
\mathbb{N}}\subset\mathbb{R}\,,\quad\text{and}\quad\sigma_{p}\left(
K_{0}^{\ast}\right)  \subset\left[  -\frac{1}{2},\frac{1}{2}\right)  \,.
\label{eigenvalues}%
\end{equation}
These are the key properties in estimating the resonances. Since $N_{\omega
}(\varepsilon)$ and $K_{\omega}^{\ast}(\varepsilon)$ scale as: $N_{\omega
}(\varepsilon)\sim\varepsilon^{2}N_{0}$ and $K_{\omega}^{\ast}(\varepsilon
)\sim-1/2+\varepsilon^{2}\left(  \partial_{\omega}K_{{\omega}}^{\ast}\right)
\left(  0\right)  $ as $\varepsilon\rightarrow0_{+}$, these operators may
excite the eigenvalues of $N_{0}$ or $K_{0}^{\ast}$ and create a singularity
in (\ref{LS_eq_1})-(\ref{LS_eq_2}) depending on the scales defining our micro-bubbles.

\begin{itemize}
\item For low density / low bulk bubbles, we have $\left(  \beta-\alpha
\rho_{\varepsilon}/k_{\varepsilon}\right)  \sim1$ and then $\left(
\beta-\alpha\rho_{\varepsilon}/k_{\varepsilon}\right)  \omega^{2}N_{\omega
}(\varepsilon)$ $\ll1$ as $\varepsilon\ll1$. Hence, there is no singularity
coming from (\ref{LS_eq_1}). Nevertheless, if $\alpha\sim\varepsilon^{-2}$ as
$\varepsilon\ll1$ (which corresponds to the assumption of a low density and
bulk regime with $r=2$) then a suitable choice of $\omega$ allows to excite
the eigenvalue $-1/2$ of $K_{0}^{\ast}$ and create a singularity in
(\ref{LS_eq_2}). In this case, we have the Minnaert resonance with
surface-modes. This resonance was first observed in \cite{Amm 1} based on
indirect integral equation methods. This result was extended to more general
families of micro-bubbles in \cite{AmmChaChouSi} and
\cite{ACCS-effective-media} (see also the more recent \cite{AmmFe 1} and
\cite{AmmFe 2}).

\item For moderate density / low bulk bubbles, we have $\alpha\sim1$ and then
we keep away from the full spectrum of $K_{0}^{\ast}$. Hence there is no
singularity coming from (\ref{LS_eq_2}). But as $\left(  \beta-\alpha
\rho_{\varepsilon}/k_{\varepsilon}\right)  \sim\varepsilon^{-2}\gg1$, suitable
choices of $\omega$ allow to excite the eigenvalues of the Newtonian operators
$N_{0}$ and create singularities in (\ref{LS_eq_1}). This gives us a sequence
of resonances with volumetric-modes which were observed in \cite{A.D-F-M-S}
and \cite{M-M-S}.

\item Observe that if $\alpha$ is negative (i.e. \emph{negative mass
densities}, similar to the Drude model for electromagnetism for instance) then
we could excite the other sequence of eigenvalues of $K_{0}^{\ast}$. This
gives us another sequence of resonances (i.e. corresponding to the sequence of
plasmonics in electromagnetics).
\end{itemize}

When the incident frequency $\omega$ is close to the ones generating the
singularities, the total field inside the bubble, solution of the system
(\ref{LS_eq_1})-(\ref{LS_eq_2}) becomes large. This implies an enhancement of
the scattered and far-fields, and motivates the definition of \emph{resonant
frequency}, which became widely used in a somehow generic sense. The limit
behaviour of the scale-dependent scattering problem (\ref{int_eq}) has been
described in \cite{Amm 1},\cite{DaGhanSi}, where the asymptotic analysis is
developed using layer potential techniques and the Gohberg-Sigal theory. Here
we develop a different approach based on the resolvent analysis of a
frequency-dependent Schr\"{o}dinger operator. The advances provided by this
approach are presented in Section \ref{Sec_Results}.\hspace{0.05cm}

\subsection{\label{Sec_Model}The equivalent frequency-dependent
Schr\"{o}dinger operator}

The asymptotic framework is next realized by contrasting an homogeneous
acoustic background with a small homogeneous inclusion, supported on
$\Omega^{\varepsilon}$, whose acoustic density and bulk are both defined by
the piecewise constant field $1_{\mathbb{R}^{3}\backslash\Omega_{\varepsilon}%
}+\varepsilon^{-2}1_{\Omega^{\varepsilon}}$. Following the notation introduced
in (\ref{Intro_Omega_eps})-(\ref{acoustic_coeff}), we assume
\begin{equation}
\rho=k:=%
\begin{cases}
1/\varepsilon^{2}\quad\text{inside }\Omega^{\varepsilon}\,,\\
1\quad\text{outside }\Omega^{\varepsilon}\,.
\end{cases}
\label{acoustic_coeff_1}%
\end{equation}
Since both contrasts are of size $\varepsilon^{-2}$, this regime defines a low
density / low bulk bubble and -- according to our previous discussion --
generates an asymptotically bounded Minnaert resonance with dominant part
independent of $\varepsilon$. Furthermore, in this particular scaling we have
$\left(  \beta-\alpha\rho_{\varepsilon}/k_{\varepsilon}\right)  =0$ (compare
with (\ref{acoustic_coeff})) which cancels the body-potential contribution in
(\ref{int_eq}). In what follows we incorporate the assumption
(\ref{acoustic_coeff_1}) in our scattering problem and provide with a
frequency-dependent auxiliary operator allowing to rephrase
(\ref{acoustic_scattering_eq_0}) in terms of a generalized eigenfunction
problem. This approach requires a large use of layer mappings, potentials and
integral operators which naturally appear in the modeling of scattering from
interfaces and obstacles. The precise definitions, the related mapping
properties and the common notation are recalled in the Appendix. When these
operators refer to the contracted boundary $\Gamma^{\varepsilon}$ we adopt
appropriate notation which are next introduced. Let $\gamma_{0}^{\mathrm{in}%
/\mathrm{ex}}(\varepsilon)$ and $\gamma_{1}^{\mathrm{in}/\mathrm{ex}%
}(\varepsilon)$ denote the lateral traces and normal-traces operators on
$\Gamma^{\varepsilon}$; the corresponding mean-traces and jumps are
$\gamma_{0}(\varepsilon)$, $\gamma_{1}(\varepsilon)$ and $[\gamma
_{0}(\varepsilon)]$, $[\gamma_{1}(\varepsilon)]$ respectively. Omitting the
radiation condition, which is next tacitly assumed, the acoustic scattering
equation (\ref{acoustic_scattering_eq_0}) writes as%

\begin{equation}%
\begin{cases}
\left(  \nabla\left(  1_{\mathbb{R}^{3}\backslash\Omega^{\varepsilon}%
}+\varepsilon^{-2}1_{\Omega^{\varepsilon}}\right)  \nabla+\omega^{2}\left(
1_{\mathbb{R}^{3}\backslash\Omega^{\varepsilon}}+\varepsilon^{-2}%
1_{\Omega^{\varepsilon}}\right)  \right)  u=0\,, \qquad\text{in }%
\mathbb{R}^{3}\backslash\Gamma^{\varepsilon}\,,\\
\\
\left[  \gamma_{0}(\varepsilon) \right]  u=0\,, \qquad\left(  \gamma
^{\mathrm{ex}}_{1}(\varepsilon) -\varepsilon^{-2}\gamma^{\mathrm{in}}%
_{1}(\varepsilon) \right)  u=0\,, \qquad\text{on }\Gamma^{\varepsilon}\,,
\end{cases}
\label{acoustic_eq_1}%
\end{equation}
Since the co-normal jump condition implies%
\begin{equation}
[\gamma_{1}(\varepsilon)] u=(\varepsilon^{-2}-1)\gamma^{\mathrm{in}}%
_{1}(\varepsilon) u\,,
\end{equation}
this problem rephrases as%
\begin{equation}%
\begin{cases}
\left(  \Delta+\omega^{2}\right)  u=0\,, \qquad\text{in }\mathbb{R}%
^{3}\backslash\Gamma^{\varepsilon}\,,\\
\\
\left[  \gamma_{0}(\varepsilon)\right]  u=0\,, \qquad\left[  \gamma_{1}\left(
\varepsilon\right)  \right]  u=\left(  \varepsilon^{-2}-1\right)
\gamma^{\mathrm{in}}_{1}( \varepsilon) u\,, \qquad\text{on }\Gamma
^{\varepsilon}\,.
\end{cases}
\label{acoustic_eq_1_1}%
\end{equation}

The Dirichlet-to-Neumann operator for the domain $\Omega^{\varepsilon}$, next
denoted with $D\!N_{z}(\varepsilon)$, is defined by%
\begin{equation}
D\!N_{z}(\varepsilon)\varphi:=\gamma_{1}^{\mathrm{in}}\left(  \varepsilon
\right)  u\,,\qquad%
\begin{cases}
\left(  \Delta+z^{2}\right)  u=0\,,\qquad\text{in }\Omega^{\varepsilon}\,,\\
\\
\gamma_{0}^{\mathrm{in}}(\varepsilon)u=\varphi\qquad\text{on }\Gamma
^{\varepsilon}\,.
\end{cases}
\label{DN_k_eps_def}%
\end{equation}
Such a definition is well-posed whenever $z^{2}\notin\sigma(-\Delta
_{\Omega^{\varepsilon}}^{D})\}$, where $\Delta_{\Omega^{\varepsilon}}^{D}$ is
the Dirichlet Laplacian in $L^{2}(\Omega^{\varepsilon})$. Since $\lambda
_{\Omega}:=\inf\sigma(-\Delta_{\Omega}^{{D}})>0$, by%
\begin{equation}
z^{2}\in\sigma\left(  -\Delta_{\Omega^{\varepsilon}}^{{D}}\right)  \quad
\iff\quad\varepsilon^{2}z^{2}\in\sigma\left(  -\Delta_{\Omega}^{{D}}\right)
\,,
\end{equation}
there follows that for each $z\in\mathbb{C}$ there exists $\varepsilon_{0}>0$
small enough (depending on $z$) such that $D\!N_{z}(\varepsilon)$ exists for all
$0<\varepsilon<\varepsilon_{0}$. Assuming $\varepsilon^{2}\omega^{2}%
\notin\sigma(\Delta_{\Omega}^{D})$, the solution of (\ref{acoustic_eq_1_1})
solves the homogeneous problem in (\ref{DN_k_eps_def}) with $z^{2}=\omega^{2}$
and with boundary datum $\gamma_{0}^{\mathrm{in}}(\varepsilon)u$. By
definition, we have%
\begin{equation}
\gamma_{1}^{\mathrm{in}}\left(  \varepsilon\right)  u=D\!N_{\omega}\left(
\varepsilon\right)  \gamma_{0}^{\mathrm{in}}\left(  \varepsilon\right)  u
\end{equation}
and (\ref{acoustic_eq_1_1}) recasts to
\begin{equation}%
\begin{cases}
\left(  \Delta+\omega^{2}\right)  u=0\,,\qquad\text{in }\mathbb{R}%
^{3}\backslash\Gamma^{\varepsilon}\,,\\
\\
\left[  \gamma_{0}\left(  \varepsilon\right)  \right]  u=0\,,\qquad\left[
\gamma_{1}\left(  \varepsilon\right)  \right]  u=\left(  \varepsilon
^{-2}-1\right)  D\!N_{\omega}\left(  \varepsilon\right)  \gamma_{0}%
(\varepsilon)u\,,\qquad\text{on }\Gamma^{\varepsilon}\,.
\end{cases}
\label{acoustic_eq_2}%
\end{equation}

Let us recall from \cite{MaPoSi SE} and \cite{MaPo SM} that, given 
$s\in\left(  0,1/2\right)  $ and $\Theta\in\mathsf{B}\left(  H^{s}\left(
\Gamma^{\varepsilon}\right)  ,H^{-s}\left(  \Gamma^{\varepsilon}\right)
\right)  $ self-adjoint (in the sense of the duality) and defining
$\Theta\,\delta_{\Gamma^{\varepsilon}}\in\mathcal{D}^{\prime}\left(
\mathbb{R}^{3}\right)  $ as%
\begin{equation}
\Theta\,\delta_{\Gamma^{\varepsilon}}u:=\int_{\Gamma^{\varepsilon}}%
d\sigma\,\Theta\left(  \gamma\left(  \varepsilon\right)  u\right)  \,,
\end{equation}
a self-adjoint (in $L^{2}(\mathbb{R}^{3})$) realization of $-\Delta +\Theta\delta_{\Gamma^{\varepsilon}}$ is
provided by the restriction of $\left(  \left.  \Delta\right\vert
\,\ker\left(  \gamma_{0}\left(  \varepsilon\right)  \right)  \right)  ^{\ast}$
to functions fulfilling interface conditions of the kind
\begin{equation}
\left[  \gamma_{0}\left(  \varepsilon\right)  \right]  u=0\,,\qquad\left[
\gamma_{1}\left(  \varepsilon\right)  \right]  =\Theta\gamma_{0}\left(
\varepsilon\right)  u\,,
\end{equation}
(see also \cite{BraEx} and \cite{BeLaLo} for previous references to this
topic). This suggests a formal analogy between our problem and the generalized
eigenfunction equation for singular perturbations of the Laplacian with
$\delta$-type transmission conditions. As a further support to this remark, we
also notice that the integral form of (\ref{acoustic_eq_2}) simply reads as
(compare with (\ref{int_eq}))%
\begin{equation}
u=u^{\mathrm{in}}-\left(  \varepsilon^{-2}-1\right)  SL_{\omega}\left(
\varepsilon\right)  \gamma_{1}^{\mathrm{in}}\left(  \varepsilon\right)  u\,,
\label{acoustic_eq_1_int}%
\end{equation}
where $u^{\mathrm{in}}$ is an incoming wave (in this case a solution of
$\left(  \Delta+\omega^{2}\right)  u^{\mathrm{in}}=0$ in $\mathbb{R}^{3}$) and
$S\!L_{\omega}(\varepsilon)$ is the single-layer operator related to
$\Gamma^{\varepsilon}$. Hence, the scattered field is represented in terms of
a single-layer potential, which, as it has been shown in \cite{MaPoSi LAP},
corresponds to the solution form of the scattering problem for $\delta$-type
singular perturbations of the free Laplacian.

A specific feature of classical scattering problems consists in the fact that
the total field identifies with a generalized eigenfunction of an auxiliary
Schr\"odinger-type operator which usually depends on the frequency. This is
quite evident when one considers the simpler stationary problem for classical
waves propagating in a medium with a local perturbation of the bulk. Assume
for instance to have a piecewise constant bulk described by $b_{0}1_{\Omega
}+1_{\mathbb{R}^{3}\backslash\Omega}$; then, a stationary wave with frequency
$\omega>0$ solves the equation%
\[
\left(  \Delta-\omega^{2}\left(  1-b_{0}\right)  1_{\Omega}+\omega^{2}\right)
u=0\,,
\]
corresponding to the generalized eigenfunction problem at energy $\omega^{2}$
for the Schr\"{o}dinger operator $-\Delta+\omega^{2}\left(  1-b_{0}\right)
1_{\Omega}$. In the attempt of adapting this construction to the more complex
framework considered in (\ref{acoustic_eq_2}), which involves a discontinuity
on $\Gamma^{\varepsilon}$ both for the acoustic bulk and density, we push
further the analogy and consider $H_{\omega}(\varepsilon)$ of the form
\begin{equation}
\Delta+\left(  \varepsilon^{-2}-1\right)  D\!N_{\omega}\left(  \varepsilon
\right)  \delta_{\Gamma^{\varepsilon}}\,, \label{H_eps_omega}%
\end{equation}
as a candidate for the frequency-dependent operator to identify the solutions
of (\ref{acoustic_eq_2}) in terms of generalized eigenfunctions of $H_{\omega
}(\varepsilon)$ at energy $\omega^{2}$.

\subsection{\label{Sec_Results}The main results}

The precise definition of $H_{\omega}(\varepsilon)$ is%
\begin{equation}%
\begin{array}
[c]{ccc}%
H_{\omega}\left(  \varepsilon\right)  u:=\Delta u\,, &  & \text{in }%
\mathbb{R}^{3}\backslash\Gamma^{\varepsilon}\,,
\end{array}
\label{Intro_tilda_eps_omega_def}%
\end{equation}
for any $u$ in the domain
\begin{equation}
\mathrm{dom}\left(  H_{\omega}\left(  \varepsilon\right)  \right)  :=\left\{
u\in H_{\Delta}^{0}(\mathbb{R}^{3}\backslash\Gamma^{\varepsilon})\cap
H^{1}(\mathbb{R}^{3}):[\gamma_{1}\left(  \varepsilon\right)  ]u=\left(
\varepsilon^{-2}-1\right)  D\!N_{\omega}\left(  \varepsilon\right)  \gamma
_{0}\left(  \varepsilon\right)  u\right\}
\,,\label{Intro_tilda_eps_omega_dom_def}%
\end{equation}
where $H_{\Delta}^{0}(\mathbb{R}^{3}\backslash\Gamma^{\varepsilon})$ is the
set of the functions $u\in L^{2}(\mathbb{R}^{3})$ such that%
\begin{equation}%
\begin{array}
[c]{ccc}%
\Delta_{\Omega^{\varepsilon}}u\in L^{2}(\Omega^{\varepsilon})\,, &  &
\Delta_{\mathbb{R}^{3}\backslash\overline{\Omega^{\varepsilon}}}\,u\in
L^{2}(\mathbb{R}^{3}\backslash\overline{\Omega^{\varepsilon}})\,.
\end{array}
\label{Laplacian_weak}%
\end{equation}
Notice that the jump condition $[\gamma_{0}\left(  \varepsilon\right)  ]u=0$
is incorporated into $\mathrm{dom}(H_{\omega}(\varepsilon))\subseteq
H^{1}(\mathbb{R}^{3})$. The properties of $H_{\omega}(\varepsilon)$ are
investigated in Section \ref{Sec_H_dil}. We next resume the main features of
this model. Let $S_{0}$ denote the single layer operator of the Laplacian in
the whole space (see the definition in Subsection \ref{Sec_SL}). The
\emph{capacitance} of $\Omega$ is defined by%
\begin{equation}
c_{\Omega}:=\int_{\Gamma}(S_{0}^{-1}1)(x)\,d\sigma(x)\,,\label{C_Omega}%
\end{equation}
and the related \emph{Minnaert frequency} is%
\begin{equation}
\omega_{M}:=\sqrt{\frac{c_{\Omega}}{|\Omega|}}\,,\label{omega_M_def}%
\end{equation}
where $|\Omega|$ denotes the volume of $\Omega$. It is worth recalling that
the positiveness of $S_{0}$ implies $c_{\Omega}>0$. According to Theorem
\ref{heo} and definitions in Subsection \ref{mod-op}, for each $\omega>0$ and
$\varepsilon>0$ sufficiently small, \eqref{Intro_tilda_eps_omega_dom_def} and
\eqref{Intro_tilda_eps_omega_def} define a self-adjoint operator in
$L^{2}\left(  \mathbb{R}^{3}\right)  $. The corresponding resolvent equation%
\begin{equation}
(H_{\omega}\left(  \varepsilon\right)  +z^{2}%
)u=f\,,\label{Intro_h_eps_omega_res_eq}%
\end{equation}
is nothing but%
\begin{equation}
\left\{
\begin{array}
[c]{l}%
\begin{array}
[c]{ccc}%
\left(  \Delta+z^{2}\right)  u=f\,, &  & \text{in }\mathbb{R}^{3}%
\backslash\Gamma^{\varepsilon}\,,
\end{array}
\\
\\%
\begin{array}
[c]{ccc}%
\left[  \gamma_{0}\left(  \varepsilon\right)  \right]  u=0\,, &  & \left[
\gamma_{1}\left(  \varepsilon\right)  \right]  u=\left(  \varepsilon
^{-2}-1\right)  D\!N_{\omega}\left(  \varepsilon\right)  \gamma_{0}\left(
\varepsilon\right)  u\,.
\end{array}
\end{array}
\right.  \label{Intro_r_k_eps_omega_eq}%
\end{equation}
In this work we provide the asymptotic analysis of the problems
(\ref{acoustic_eq_2}) and (\ref{Intro_h_eps_omega_res_eq}) by using a
resolvent operator approach. Our aims are the resolvent analysis of resonances
and global asymptotic expansions of the scattered field. The main
contributions and the novelties with respect to previous works are summarized
in the following.

\begin{quotation}
1. Resolvent's asymptotics and acoustic resonances
\end{quotation}

Since $H_{\omega}\left(  \varepsilon\right)  $ depends on the physical
frequency, the dominating term of its resolvent $\left(  -H_{\omega
}(\varepsilon)-z^{2}\right)  ^{-1}$ may still depend on the values of $\omega$
as $\varepsilon\rightarrow0$. Our main result consists in showing that the
resonant frequency $\omega_{M}$ is the unique value of $\omega$ at which the
resolvent has a non-trivial limit, converging toward a point perturbation of
the Laplacian. After introducing the Sobolev spaces%
\begin{align*}
&  \left.  \dot{H}_{y_{0}}^{2}(\mathbb{R}^{3}):=\left\{  u\in\mathcal{C}%
_{b}(\mathbb{R}^{3}):|\nabla u|\in L^{2}(\mathbb{R}^{3})\,,\ \Delta u\in
L^{2}(\mathbb{R}^{3})\,,\ u(y_{0})=0\right\}  \,,\right.  \\
& \\
&  \left.  H_{y_{0}}^{2}(\mathbb{R}^{3}):=\dot{H}_{y_{0}}^{2}(\mathbb{R}%
^{3})\cap L^{2}(\mathbb{R}^{3})=\{u\in H^{2}(\mathbb{R}^{3}):u(y_{0}%
)=0\}\,,\right.
\end{align*}
such a limit operator is defined as%
\begin{align}
&  \left.  \Delta_{y_{0}}:\mathrm{dom}(\Delta_{y_{0}})\subset L^{2}%
(\mathbb{R}^{3})\rightarrow L^{2}(\mathbb{R}^{3})\,,\qquad\Delta_{y_{0}%
}u:=\Delta u_{0}\equiv\Delta u+q\,\delta_{y_{0}}\,,\right.  \label{wd1}\\
& \nonumber\\
&  \left.  \mathrm{dom}(\Delta_{y_{0}}):=\left\{  u\in L^{2}(\mathbb{R}%
^{3}):u(x)=u_{0}(x)+q\,\mathcal{G}{_{0}(x-y_{0})}\,,\ u_{0}\in\dot{H}_{y_{0}%
}^{2}(\mathbb{R}^{3})\,,\ q\in\mathbb{C}\right\}  \,.\right.  \label{wd2}%
\end{align}
In this framework, the \emph{free Laplacian} is the Laplacian operator defined
on $\mathrm{dom}(\Delta)=H^{2}(\mathbb{R}^{3})$ and its resolvent has the
Green kernel%
\[
\mathcal{G}_{z}(x-y)=\frac{e^{iz|x-y|}}{4\pi\,|x-y|}\,.
\]
With reference to the notation and results in \cite[Chapter I.1]{Albeverio},
$\Delta_{y_{0}}$ corresponds to the operator denoted there by $\Delta
_{\alpha,y}$, with $\alpha=0$ and $y=y_{0}$ (see \cite[Theorem 1.1.2]%
{Albeverio}). It belongs to the class of point perturbations of the free
Laplacian and is a self-adjoint extension of the closed symmetric restriction
$\Delta|H_{y_{0}}^{2}(\mathbb{R}^{3})$. By the well-known Kre\u{\i}n resolvent
formula, one has%
\begin{equation}
\left(  -\Delta_{y_{0}}-z^{2}\right)  ^{-1}u=\left(  -\Delta-z^{2}\right)
^{-1}u+{4\pi}\,\frac{i}{z}\ \mathcal{G}_{z}(\cdot-y_{0})\left\langle
\,\mathcal{G}_{\bar{z}}(\cdot-y_{0}),u\right\rangle _{L^{2}(\mathbb{R}^{3}%
)}\,.\label{Krein_point_id}%
\end{equation}
Equivalently, its integral kernel is given by
\[
\left(  -\Delta_{y_{0}}-z^{2}\right)  ^{-1}(x,y)=\mathcal{G}_{z}(x-y)+{4\pi
}\,\frac{i}{z}\ \mathcal{G}_{z}(x-y_{0})\,\mathcal{G}_{z}(y-y_{0})\,.
\]
The next result shows that $H_{\omega}(\varepsilon)$ converges  in norm resolvent sense as $\varepsilon\rightarrow0_{+}$ to $\Delta_{y_{0}}$
if and only if $\omega=\omega_{M}$, otherwise the limit is trivial (i.e., it equals the free Laplacian).
\begin{theorem}
\label{Th 1}For any $z\in{\mathbb{C}}_{+}\backslash i{\mathbb{R}}_{+}$\ and
for any $\varepsilon>0$ sufficiently small, one has
\begin{align}
&  \left.  \omega\not =\omega_{M}\quad\Longrightarrow\quad\left\Vert \left(
-H_{\omega}(\varepsilon)-z^{2}\right)  ^{-1}-\left(  -\Delta-z^{2}\right)
^{-1}\right\Vert _{L^{2}(\mathbb{R}^{3}),L^{2}(\mathbb{R}^{3})}\leq
c\,\varepsilon\,,\right. \\
& \nonumber\\
&  \left.  \omega=\omega_{M}\quad\Longrightarrow\quad\left\Vert \left(
-H_{\omega}(\varepsilon)-z^{2}\right)  ^{-1}-\left(  -\Delta_{y_{0}}%
-z^{2}\right)  ^{-1}\right\Vert _{L^{2}(\mathbb{R}^{3}),L^{2}(\mathbb{R}^{3}%
)}\leq c\,\varepsilon^{1/2}\,.\right.
\end{align}

\end{theorem}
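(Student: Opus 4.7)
The plan is to derive a Kre\u{\i}n-type resolvent formula for $H_\omega(\varepsilon)$ via single-layer potentials and reduce the problem to the asymptotic analysis of one boundary operator on the fixed boundary $\Gamma$. Starting from the ansatz $u = (-\Delta - z^2)^{-1}f + S\!L_z(\varepsilon)\,q$, the Helmholtz equation off $\Gamma^\varepsilon$ and the Dirichlet jump $[\gamma_0(\varepsilon)]u=0$ are automatic, and the remaining Neumann interface condition in (\ref{Intro_r_k_eps_omega_eq}) reduces to the boundary equation
\[
A_\varepsilon q = -(\varepsilon^{-2}-1)\,D\!N_\omega(\varepsilon)\,\gamma_0(\varepsilon)\,(-\Delta - z^2)^{-1}f,\qquad A_\varepsilon := I + (\varepsilon^{-2}-1)\,D\!N_\omega(\varepsilon)\,S_z(\varepsilon),
\]
so the resolvent difference is $-S\!L_z(\varepsilon)\,A_\varepsilon^{-1}$ composed with the right-hand side. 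I pull back to $\Gamma$ via $x = y_0 + \varepsilon(\tilde x - y_0)$: $S_z(\varepsilon)$ is unitarily conjugate to $\varepsilon\,S_{\varepsilon z}$ and $D\!N_\omega(\varepsilon)$ to $\varepsilon^{-1}D\!N_{\varepsilon\omega}$, so the rescaled boundary operator reads $\widetilde A_\varepsilon = I + (\varepsilon^{-2}-1)\,D\!N_{\varepsilon\omega}\,S_{\varepsilon z}$. Expanding $S_k = S_0 + (ik/4\pi)(\mathbf{1}\otimes\mathbf{1}) + O(k^2)$ and the analytic $D\!N_k = D\!N_0 + k^2\,T_2 + O(k^4)$, and exploiting the identity $D\!N_0\mathbf{1} = 0$ (which kills the would-be $\varepsilon^1$ cross term), one gets
\[
\widetilde A_\varepsilon = \varepsilon^{-2} D_0 + C_0(\omega,z) + \varepsilon\,N(\omega,z) + O(\varepsilon^2),\qquad D_0 := D\!N_0 S_0 = \tfrac{1}{2}I + K_0^\ast.
\]

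The leading operator $D_0$ fails to be invertible: since $\ker D\!N_0$ equals the constants, $\ker D_0 = \mathrm{span}(\psi_0)$ with $\psi_0 := S_0^{-1}\mathbf{1}$ (the $-\tfrac12$-eigenvector of $K_0^\ast$), and by self-adjointness $(\mathrm{range}\,D_0)^\perp = \mathrm{span}(\mathbf{1})$. I invert $\widetilde A_\varepsilon$ by a Grushin/Schur-complement decomposition along these two one-dimensional subspaces. The $(2,2)$-block is dominated by the invertible $\varepsilon^{-2}D_0|_{\mathrm{span}(\psi_0)^\perp}$, so its inverse is $O(\varepsilon^2)$; the whole game is in the $(1,1)$ scalar Schur complement, which at leading order equals
\[
c(\omega) := \tfrac{1}{|\Gamma|}\,\langle\mathbf{1},\,C_0\psi_0\rangle = \frac{c_\Omega - \omega^2|\Omega|}{|\Gamma|},
\]
where two key identities conspire: $\langle\mathbf{1},\psi_0\rangle = c_\Omega$ by the definition (\ref{C_Omega}) of the capacitance, and $\langle\mathbf{1},T_2\mathbf{1}\rangle = -|\Omega|$ by the divergence theorem applied to $(\Delta+k^2)u = 0$ with $u|_\Gamma = \mathbf{1}$. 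The vanishing of $c(\omega)$ is hence exactly the Minnaert condition $\omega = \omega_M$.

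When $\omega \neq \omega_M$, $c(\omega)\neq 0$ and $\widetilde A_\varepsilon^{-1}$ is uniformly bounded; combined with the operator norm $\|S\!L_z(\varepsilon)\|_{L^2(\Gamma^\varepsilon)\to L^2(\mathbb{R}^3)} = O(\varepsilon)$, which follows from $\|\mathcal{G}_{\varepsilon z}\|_{L^2(\mathbb{R}^3)} \sim \varepsilon^{-1/2}$ after rescaling, this yields the first bound. At $\omega = \omega_M$ the leading Schur scalar vanishes and one must expand one further order: the subleading coefficient is sourced by the unique $\varepsilon^3$-term of $D\!N_{\varepsilon\omega}S_{\varepsilon z}$ surviving the $\mathrm{span}(\mathbf{1})$-projection, namely $(iz\omega^2/4\pi)(T_2\mathbf{1})\otimes\mathbf{1}$, since the other candidate $D\!N_0 V_2\psi_0$ vanishes upon pairing with $\mathbf{1}$ by self-adjointness of $D\!N_0$; it evaluates to $c^{(1)} = -iz\omega_M^2\,c_\Omega|\Omega|/(4\pi|\Gamma|)$, so that $S^{-1}$ develops an $\varepsilon^{-1}$ blow-up along $\psi_0$. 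Plugging into the Kre\u{\i}n formula and using $[(-\Delta-z^2)^{-1}f](y_0) = \langle\mathcal{G}_{\bar z}(\cdot - y_0), f\rangle$, the factors $\omega_M^2\,c_\Omega|\Omega|/|\Gamma|$ cancel by the very definition (\ref{omega_M_def}) of $\omega_M$ and what remains is exactly $\tfrac{4\pi i}{z}\,\mathcal{G}_z(\cdot - y_0)\otimes\mathcal{G}_{\bar z}(\cdot - y_0)$, the rank-one tail of (\ref{Krein_point_id}). The $\varepsilon^{1/2}$ rate arises from the $L^2$-gap
\[
\Bigl\|\int_\Gamma \mathcal{G}_z(\cdot - y_0 - \varepsilon(\tilde y - y_0))\,\psi_0(\tilde y)\,d\sigma(\tilde y) - c_\Omega\,\mathcal{G}_z(\cdot - y_0)\Bigr\|_{L^2(\mathbb{R}^3)} = O(\varepsilon^{1/2}),
\]
amplified by the $\varepsilon^{-1}$ singularity of $S^{-1}$; the $\varepsilon^{1/2}$ here, rather than $\varepsilon$, is genuine and reflects that the pointwise approximation of the single-layer integral by its centre-point value degrades to $\varepsilon^{1/2}$ in $L^2$-norm because of the $|x-y_0|^{-1}$ singularity of $\mathcal{G}_z$. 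The main technical obstacle is promoting all these formal asymptotics to operator-norm bounds in the natural $H^{\pm 1/2}(\Gamma)$ scale for the boundary objects, establishing uniform-in-$\varepsilon$ invertibility of the $(2,2)$-block and quantitative operator-norm remainders in the $\varepsilon$-expansions of $S_k$ and $D\!N_k$.
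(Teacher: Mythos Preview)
Your proposal is correct and follows essentially the same route as the paper. Both arguments rest on the Kre\u{\i}n formula \eqref{krf}, the pull-back to the fixed boundary $\Gamma$, and a Schur/Grushin reduction along the one-dimensional defect of the zero-energy boundary operator; the two scalar computations you single out, $\langle\mathbf{1},\psi_0\rangle=c_\Omega$ and $\langle\mathbf{1},T_2\mathbf{1}\rangle=-|\Omega|$, are exactly Lemmata~\ref{Lemma_Schur_coeff} and~\ref{k3}, and your $O(\varepsilon^{1/2})$ rate for the single-layer collapse is the content of Corollary~\ref{Lemma_trace_R_eps_k_U_eps}.

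The only cosmetic difference is a choice of conjugation. The paper first conjugates by $S_{\varepsilon\omega}$ so as to work with $\varepsilon^{2}+(1-\varepsilon^{2})(\tfrac12+K_{\varepsilon\omega})S_{\varepsilon z}S_{\varepsilon\omega}^{-1}$ on $H^{1/2}(\Gamma)$; since $K_0$ is self-adjoint in the $S_0^{-1}$ inner product~\eqref{ip}, the block decomposition $P_0\,\widehat\oplus\,Q_0$ is then orthogonal and the kernel and cokernel coincide (both equal $\mathrm{span}(\mathbf{1})$). You instead keep $D_0=D\!N_0S_0=\tfrac12+K_0^{\ast}$ and run a genuine Grushin problem with distinct kernel $\mathrm{span}(S_0^{-1}\mathbf{1})$ and cokernel $\mathrm{span}(\mathbf{1})$; this avoids the conjugation step at the price of a non-orthogonal splitting. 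Either packaging works; the paper's choice makes the remainder estimates in Theorem~\ref{Lemma_Schur} slightly cleaner to state, while yours is a touch more direct. Your normalization of the Schur scalar by $|\Gamma|^{-1}$ is harmless but unusual; the paper's normalization by $c_\Omega^{-1}$ (inherited from $P_0$) is what makes the final cancellation in the Minnaert case most transparent.
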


Perturbations with small support and high contrast have been considered in
connection with the low energy behaviour of Schr\"{o}dinger operators. In the
 case of a perturbation by a regular potential, the asymptotic problem was discussed in \cite{AlGeKr}, while
the case of a $\delta$-like perturbation supported on a small sphere was
considered in \cite{Shi}. In both cases, the
resolvent-convergence toward a point interaction model was proved under the
condition that the corresponding unperturbed dilated Hamiltonian has a
zero-energy resonance. While our model does not directly fit in the framework
considered in these works, it exhibits a similar physical behaviour. Following
the conclusions of \cite{AlGeKr} and \cite{Shi}, this suggests that a
zero-energy resonance should appear at $(\varepsilon,\omega)=(0,\omega_{M})$
for the dilated operator $H_{\varepsilon,\omega}$. As the resolvent analysis
shows indeed (see Remark \ref{Remark_spectrum}), the map $z\rightarrow\left(
-H_{\omega}(\varepsilon)-z^{2}\right)  ^{-1}$ is analytic in the neighbourhood
of any $\omega>0$: this corresponds to the absence of eigenvalues or
resonances of $H_{\omega}(\varepsilon)$ around any point $z^{2}=\omega
^{2\text{ }}$of the positive continuous spectrum. There is, however, a strong
indication that in the resonant regime $H_{\omega_{M}}(\varepsilon)$ may have
an eigenvalue/resonance localized in a neighbourhood of size $\sim\varepsilon$
of the origin. Despite its theoretical relevance, the analysis of this point
is outside the main scope of our work and it is postponed to a further development.

While acoustic resonances have been previously discussed by a direct approach
to the scattering problem (e.g. in \cite{Amm 1} and \cite{AmmFe 1}), their
characterization in terms of the resolvent asymptotics was not enlightened so
far. Our result clarifies the role of the physical resolvent for the
stationary acoustic equation in the emergence of such resonances in the
asymptotic regime.

\begin{quotation}
2. A global-in-space asymptotic expansion of the scattered field
\end{quotation}

According to the resolvent equation (\ref{Intro_h_eps_omega_res_eq}), the
acoustic scattering problem -- in the equivalent form (\ref{acoustic_eq_2}) --
identifies with the generalized eigenfunctions problem for $H_{\omega
}(\varepsilon)$. This important feature is discussed in details in Section
\ref{Sec-Scatt}. The interest in establishing such relation is not merely
formal. Indeed, it allows to represent the scattered field in terms of the
limit resolvent on the continuos spectrum. Hence, the global-in-space
asymptotic behavior of the solutions of (\ref{acoustic_eq_2}) is determined by
similar computation to the ones leading to the norm-resolvent asymptotic
expansions of $H_{\omega}(\varepsilon)$. In particular, we consider two
regimes in terms of $\omega$ and $\varepsilon$. In the first one, discussed in
the Section \ref{Sec_Asymptotic}, we fix $\omega$ and provide the expansion in
term of $\varepsilon$ only.

\begin{theorem}
\label{Th 2} Let $\omega>0$, $\alpha>1/2$, and let $L_{-\alpha}^{2}%
(\mathbb{R}^{3})$, $H_{-\alpha}^{2}(\mathbb{R}^{3})$ be the weighted spaces in
(\ref{W_Sob_0})-(\ref{W_Sob}). For $u_{\omega}^{\mathrm{in}}\in H_{-\alpha
}^{2}(\mathbb{R}^{3})$ a solution of the homogeneous Helmholtz equation%
\begin{equation}
\left(  \Delta+\omega^{2}\right)  u_{\omega}^{\mathrm{in}}=0\,,
\end{equation}
denote with $u_{\omega}^{\mathrm{sc}}(\varepsilon)$ the unique radiating
solution of the scattering problem%
\begin{equation}%
\begin{cases}
\left(  \nabla\cdot(1_{\mathbb{R}^{3}\backslash\Omega^{\varepsilon}%
}+\varepsilon^{-2}1_{\Omega^{\varepsilon}})\nabla+\omega^{2}(1_{\mathbb{R}%
^{3}\backslash\Omega^{\varepsilon}}+\varepsilon^{-2}1_{\Omega^{\varepsilon}%
})\right)  u_{\omega}^{\mathrm{sc}}(\varepsilon)=0\,,\qquad\text{in
}\mathbb{R}^{3}\backslash\Gamma^{\varepsilon}\,,\\
\\
\left(  \gamma_{0}^{\mathrm{in}}(\varepsilon)-\gamma_{0}^{\mathrm{ex}%
}(\varepsilon)\right)  \left(  u_{\omega}^{\mathrm{in}}+u_{\omega
}^{\mathrm{sc}}(\varepsilon)\right)  =0\,,\qquad\left(  \varepsilon^{-2}%
\gamma_{1}^{\mathrm{in}}(\varepsilon)-\gamma_{1}^{\mathrm{ex}}(\varepsilon
)\right)  \left(  u_{\omega}^{\mathrm{in}}+u_{\omega}^{\mathrm{sc}%
}(\varepsilon)\right)  =0\,,\qquad\text{on }\Gamma^{\varepsilon}\,.
\end{cases}
\end{equation}
Then, for any $\varepsilon>0$ sufficiently small, one has, uniformly with
respect to the choice of the incoming wave $u_{\omega}^{\mathrm{in}}$,
\begin{align}
&  \left.  \omega\not =\omega_{M}\quad\Longrightarrow\quad\left\{
\begin{array}
[c]{l}%
\left(  u_{\omega}^{\mathrm{sc}}(\varepsilon)\right)  (x)=\varepsilon
\,\frac{\omega^{2}\,c_{\Omega}}{\omega_{M}^{2}-\omega^{2}}\ u_{\omega
}^{\mathrm{in}}(y_{0})\,\frac{e^{i\omega|x-y_{0}|}}{4\pi|x-y_{0}|}+\left(
r_{\omega}(\varepsilon)\right)  (x)\,,\\
\\
\left\Vert r_{\omega}(\varepsilon)\right\Vert _{L_{-\alpha}^{2}(\mathbb{R}%
^{3})}\leq c\,\varepsilon^{3/2}\,,
\end{array}
\right.  \right.  \label{non_res_scattering}\\
& \nonumber\\
&  \left.  \omega=\omega_{M}\quad\Longrightarrow\quad\left\{
\begin{array}
[c]{l}%
\left(  u_{\omega}^{\mathrm{sc}}(\varepsilon)\right)  (x)=\frac{4\pi i}%
{\omega}\ u_{\omega}^{\mathrm{in}}(y_{0})\,\frac{e^{i\omega|x-y_{0}|}}%
{4\pi|x-y_{0}|}+\left(  r_{\omega}\left(  \varepsilon\right)  \right)
(x)\,,\\
\\
\left\Vert r_{\omega}(\varepsilon)\right\Vert _{L_{-\alpha}^{2}(\mathbb{R}%
^{3})}\leq c\,\varepsilon^{1/2}\,.
\end{array}
\right.  \right.  \label{res_scattering}%
\end{align}

\end{theorem}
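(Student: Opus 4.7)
The plan is to leverage the equivalence established in Section~\ref{Sec-Scatt} between the scattering problem and the generalized eigenfunction problem for $H_\omega(\varepsilon)$ at energy $\omega^{2}$, and then push the Kre\u{\i}n-type resolvent asymptotics underlying Theorem~\ref{Th 1} from $L^{2}(\mathbb{R}^{3})$ to the weighted spaces $L^{2}_{-\alpha}(\mathbb{R}^{3})$ needed in order to apply the resolvent to non-$L^{2}$ incoming data. Concretely, I would represent the scattered field as the single-layer potential
\begin{equation*}
u_\omega^{\mathrm{sc}}(\varepsilon) = -(\varepsilon^{-2}-1)\,S\!L_\omega(\varepsilon)\,\psi_\varepsilon,\qquad \psi_\varepsilon:=\gamma_1^{\mathrm{in}}(\varepsilon)\bigl(u_\omega^{\mathrm{in}}+u_\omega^{\mathrm{sc}}(\varepsilon)\bigr),
\end{equation*}
which is the scattering analogue of the Kre\u{\i}n correction producing $(-H_\omega(\varepsilon)-z^{2})^{-1}-(-\Delta-z^{2})^{-1}$; applying the inner normal trace with the jump relation $\gamma_1^{\mathrm{in}}(\varepsilon)S\!L_\omega(\varepsilon) = \tfrac{1}{2}I+K_\omega^{\ast}(\varepsilon)$ produces a boundary integral equation of the schematic form $[\tfrac{1}{2}(1+\varepsilon^{2})I+(\varepsilon^{2}-1)K_\omega^{\ast}(\varepsilon)]\psi_\varepsilon = \varepsilon^{2}\gamma_1^{\mathrm{in}}(\varepsilon)u_\omega^{\mathrm{in}}$.

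Next I would rescale to the fixed boundary $\Gamma$ via $x=y_0+\varepsilon(y-y_0)$. Using the small-$\varepsilon$ expansion $K_\omega^{\ast}(\varepsilon)=K_0^{\ast}+\varepsilon^{2}\omega^{2}A_0+O(\varepsilon^{3})$, and the fact that $K_0^{\ast}$ has an isolated eigenvalue at $-1/2$ with one-dimensional eigenspace $\mathbb{C}\phi_0$, $\phi_0\propto S_0^{-1}1$, a Riesz/Feshbach block decomposition splits the rescaled equation into a scalar equation on $\mathbb{C}\phi_0$ --- whose leading coefficient is proportional to $|\Omega|(\omega_M^{2}-\omega^{2})$ via the identities $c_\Omega=\int_{\Gamma}S_0^{-1}1\,d\sigma$ and $\omega_M^{2}=c_\Omega/|\Omega|$ --- and a complementary equation that is inverted uniformly in $\varepsilon$. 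Inserting the Taylor expansion $u_\omega^{\mathrm{in}}(y)=u_\omega^{\mathrm{in}}(y_0)+O(\varepsilon)$ on $\Omega^{\varepsilon}$ then determines the source and yields the leading value of the total charge $\int_{\Gamma^{\varepsilon}}\psi_\varepsilon\,d\sigma$.

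To convert the density into the stated expansion of the scattered field, I would expand the single-layer kernel about $y=y_0$ to obtain $S\!L_\omega(\varepsilon)\psi_\varepsilon(x) = \mathcal{G}_\omega(x-y_0)\int_{\Gamma^{\varepsilon}}\psi_\varepsilon\,d\sigma + O(\varepsilon)$, uniformly in $L^{2}_{-\alpha}(\mathbb{R}^{3})$ for $\alpha>1/2$. Multiplying by $-(\varepsilon^{-2}-1)$ and substituting the leading total charge coming from the block reduction produces the two regimes: off resonance the scalar coefficient $(\omega_M^{2}-\omega^{2})^{-1}$ is bounded, the projection of $\psi_\varepsilon$ onto $\phi_0$ remains of order $\varepsilon^{2}$, and the scattered field is $O(\varepsilon)$ with prefactor $\omega^{2}c_\Omega/(\omega_M^{2}-\omega^{2})$; at $\omega=\omega_M$ the scalar coefficient vanishes, the next-order correction of the block operator must be retained, the projection of $\psi_\varepsilon$ on $\phi_0$ is boosted by a factor $\varepsilon^{-2}$, and the scattered field becomes $O(1)$ with amplitude $4\pi i/\omega$, exactly matching the Kre\u{\i}n formula \eqref{Krein_point_id} for $\Delta_{y_0}$ applied to $u_\omega^{\mathrm{in}}$.

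The hardest step is the uniform-in-$\omega$ analysis of the block operator near $\omega=\omega_M$ together with the promotion of the norm-resolvent estimate to the weighted spaces $L^{2}_{-\alpha}(\mathbb{R}^{3})$: one must balance the $\varepsilon^{-2}$ prefactor in front of $S\!L_\omega(\varepsilon)$ against the smallness of $\psi_\varepsilon$ produced by the block inversion, identify precisely the coefficient of the Minnaert eigenmode $\phi_0$ in the $\varepsilon^{2}$-correction of $K_\omega^{\ast}(\varepsilon)$, and control the remainders of the kernel expansion in the weighted norm. This analysis also accounts for the two distinct rates $\varepsilon^{3/2}$ and $\varepsilon^{1/2}$ in \eqref{non_res_scattering} and \eqref{res_scattering}: at resonance the surviving correction is one order lower in $\varepsilon$, so the same absolute remainder yields a worse relative bound.
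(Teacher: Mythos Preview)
Your proposal is essentially the paper's argument recast in the dual $K^{\ast}$-formulation. The paper represents $u_{\omega}^{\mathrm{sc}}(\varepsilon)=-G_{\omega}(\varepsilon)\Lambda_{\omega}^{\omega}(\varepsilon)\,\gamma_{0}(u_{\omega}^{\mathrm{in}}\circ\Phi_{\varepsilon})$ via Theorem~\ref{Lemma_LAP}, factors $\Lambda_{\omega}^{\omega}(\varepsilon)$ through $\varepsilon^{2}+(1-\varepsilon^{2})(\tfrac12+K_{\varepsilon\omega})$, and carries out the same Schur/Feshbach reduction you describe, but with respect to the $-\tfrac12$-eigenspace $\mathbb{C}\cdot 1$ of $K_{0}$ on $H^{1/2}(\Gamma)$ (Theorems~\ref{Lemma_Schur}--\ref{Theorem_Schur}); the two block decompositions are conjugate through the Calder\'on identity $S_{0}K_{0}^{\ast}=K_{0}S_{0}$. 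The paper's choice buys a cleaner source term: it is the \emph{Dirichlet} datum $\gamma_{0}(u_{\omega}^{\mathrm{in}}\circ\Phi_{\varepsilon})=u_{\omega}^{\mathrm{in}}(y_{0})+O(\varepsilon^{1/2})$, so $u_{\omega}^{\mathrm{in}}(y_{0})$ appears immediately. In your route the source is the \emph{Neumann} datum $\gamma_{1}^{\mathrm{in}}(\varepsilon)u_{\omega}^{\mathrm{in}}$, whose leading part is $\nu\cdot\nabla u_{\omega}^{\mathrm{in}}(y_{0})$; the value $u_{\omega}^{\mathrm{in}}(y_{0})$ only emerges after pairing with the left eigenfunction $1$ of $K_{0}^{\ast}$ and invoking the divergence theorem together with $(\Delta+\omega^{2})u_{\omega}^{\mathrm{in}}=0$, a step you should make explicit rather than appeal to a Taylor expansion of $u_{\omega}^{\mathrm{in}}$ itself. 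Two small corrections: the operator acting on $\psi_{\varepsilon}$ should read $\tfrac12(1+\varepsilon^{2})+(1-\varepsilon^{2})K_{\omega}^{\ast}(\varepsilon)$ (your sign on the $K^{\ast}$ term is flipped, and with the wrong sign the $-\tfrac12$ mode is no longer singular); and under the sole hypothesis $u_{\omega}^{\mathrm{in}}\in H_{-\alpha}^{2}$ the trace and kernel expansions are only $O(\varepsilon^{1/2})$ via the H\"older embedding (cf.\ Lemma~\ref{Lemma_R_k_eps_exp} and Corollary~\ref{Lemma_trace_R_eps_k_U_eps}), which is precisely the origin of the half-integer remainder rates $\varepsilon^{3/2}$ and $\varepsilon^{1/2}$.
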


In the second one, discussed in Section \ref{Sec_Asymptotic_1}, we provide the
expansions by varying both $\omega$ and $\varepsilon$.

\begin{theorem}
\label{Th 3}Let $c_{M}>0$ and ${\mathcal{I}}_{M}\subset{\mathbb{R}}_{+}$ be a
bounded interval containing $\omega_{M}$. For any $\varepsilon>0$ sufficiently
small, the expansion%
\begin{align}
&  \left.  \left(  u_{\omega}^{\mathrm{sc}}(\varepsilon)\right)
(x)=\frac{\varepsilon\,\omega^{2}c_{\Omega}}{\omega_{M}^{2}-\omega
^{2}-i\varepsilon\frac{\omega^{3}c_{\Omega}}{4\pi}}\,\,u_{\omega
}^{\mathrm{in}}(y_{0})\,\frac{e^{i\omega|x-y_{0}|}}{4\pi|x-y_{0}|}+\left(
r_{\omega}(\varepsilon)\right)  (x)\,,\right.
\label{Intro_scattering_exp_quasires}\\
& \nonumber\\
&  \left.  \left\Vert r_{\omega}(\varepsilon)\right\Vert _{L_{-\alpha}%
^{2}(\mathbb{R}^{3})}\leq c\ \frac{\varepsilon^{3/2}}{\omega_{M}^{2}%
-\omega^{2}}\,,\qquad\alpha>1/2\,.\right.
\end{align}
holds uniformly with respect to both  $\omega$ in $\left\{  \omega
\in\mathcal{I}_{M}:\left\vert \omega-\omega_{M}\right\vert \geq c_{M}%
\,\varepsilon\right\}  $ and $u_{\omega}^{\mathrm{in}}$ in any bounded subset
of $H_{-\alpha}^{2}(\mathbb{R}^{3})$.
\end{theorem}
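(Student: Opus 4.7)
My plan is to revisit the norm-resolvent analysis behind Theorems \ref{Th 1} and \ref{Th 2}, and refine it so as to retain the full Lorentzian denominator $\omega_M^2-\omega^2-i\varepsilon\omega^3 c_\Omega/(4\pi)$ without collapsing it into either the purely non-resonant or the purely resonant regime. The tool of choice is a Schur complement (Grushin) reduction applied to the boundary operator appearing in the Kre\u{\i}n-type resolvent formula for $H_\omega(\varepsilon)$.

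First, following Section \ref{Sec-Scatt}, I express the scattered field through a single-layer ansatz
\[
u_\omega^{\text{sc}}(\varepsilon)(x) = -(\varepsilon^{-2}-1)\bigl(SL_\omega(\varepsilon)\,M(\omega,\varepsilon)^{-1}g_\omega^{\text{in}}\bigr)(x),
\]
where $M(\omega,\varepsilon)\in\mathsf{B}(H^{-1/2}(\Gamma^\varepsilon))$ is the boundary operator encoding the transmission condition of (\ref{Intro_r_k_eps_omega_eq}) and $g_\omega^{\text{in}}$ is constructed from the traces of $u_\omega^{\text{in}}$. Pulling back to $\Gamma$ by $x\mapsto y_0+\varepsilon(y-y_0)$ and using the scaling properties of $SL_\omega(\varepsilon)$, $K_\omega^*(\varepsilon)$ and $DN_\omega(\varepsilon)$ already exploited in the proofs of Theorems \ref{Th 1}--\ref{Th 2}, the rescaled operator $\widetilde M(\omega,\varepsilon)$ admits an expansion of the schematic form
\[
\widetilde M(\omega,\varepsilon) = M_0 + \varepsilon(\omega_M^2-\omega^2)\,T_1 + i\varepsilon\,\tfrac{\omega^3}{4\pi}\,(\psi_0\otimes\psi_0) + \varepsilon^{3/2}B(\omega,\varepsilon),
\]
uniformly in $\omega\in\mathcal{I}_M$, with $B$ bounded. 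Here $M_0$ has a one-dimensional kernel spanned by $\psi_0 := S_0^{-1}1$ (with $\int_\Gamma\psi_0\,d\sigma = c_\Omega$), $T_1$ reproduces on the kernel the expected factor proportional to $|\Omega|$, and the rank-one imaginary term comes from the constant imaginary contribution $i\omega/(4\pi)$ of the free Helmholtz kernel. It is precisely this last term that was absorbed into the remainder in Theorem \ref{Th 2} but must now be kept explicitly.

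Next, I apply a Schur complement reduction with respect to the splitting $H^{-1/2}(\Gamma) = \mathbb{C}\psi_0\oplus\psi_0^\perp$. Since $M_0$ restricted to $\psi_0^\perp$ is invertible, the complementary block of $\widetilde M$ is uniformly invertible on $\mathcal{I}_M$ by a Neumann series, and the inversion of $\widetilde M$ reduces to controlling the scalar Schur complement, which, after the natural normalization, takes the form
\[
s(\omega,\varepsilon) = \varepsilon\Bigl[\bigl(\omega_M^2-\omega^2\bigr) - i\varepsilon\tfrac{\omega^3 c_\Omega}{4\pi}\Bigr] + \mathcal{O}(\varepsilon^{5/2}).
\]
Under the assumption $|\omega-\omega_M|\ge c_M\varepsilon$ with $\omega\in\mathcal{I}_M$, the combined real and imaginary parts force $|s(\omega,\varepsilon)|\gtrsim\varepsilon|\omega_M^2-\omega^2|$, while the $\mathcal{O}(\varepsilon^{5/2})$ remainder is a lower-order perturbation that can be absorbed. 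Inverting $s$, projecting onto $\psi_0$, and substituting into the single-layer ansatz with $(\varepsilon^{-2}-1)SL_\omega(\varepsilon)\psi_0(x)\to\varepsilon c_\Omega\mathcal{G}_\omega(x-y_0)$ at leading order, then yields the explicit Lorentzian leading term of (\ref{Intro_scattering_exp_quasires}).

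The main obstacle will be obtaining the expansion of $\widetilde M(\omega,\varepsilon)$ with an $\varepsilon^{3/2}$ operator-norm remainder uniformly in $\omega\in\mathcal{I}_M$ (rather than only pointwise as $\omega\to\omega_M$), and then transferring the boundary-level bound to the spatial $L^2_{-\alpha}$ estimate of (\ref{Intro_scattering_exp_quasires}). The weight $\alpha>1/2$ is required so that $\mathcal{G}_\omega(\,\cdot\,-y_0)\in L^2_{-\alpha}(\mathbb{R}^3)$ and so that $SL_\omega(\varepsilon)$ maps continuously into this weighted space with bounds that do not deteriorate as $\omega$ varies over $\mathcal{I}_M$. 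The stated remainder $\varepsilon^{3/2}/|\omega_M^2-\omega^2|$ then emerges from one factor $1/|s(\omega,\varepsilon)|\lesssim 1/(\varepsilon|\omega_M^2-\omega^2|)$ combined with the $\varepsilon^{5/2}$ order of the first non-trivial correction in the Schur complement analysis, where at the boundary $|\omega-\omega_M|\sim\varepsilon$ of the admissible range the estimate degenerates to the $\varepsilon^{1/2}$ bound already seen in the resonant case of Theorem \ref{Th 2}.
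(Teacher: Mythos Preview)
Your strategy is exactly the paper's: a Schur (Grushin) reduction on the one-dimensional kernel of the leading boundary operator, keeping the next-order imaginary coefficient so that the scalar Schur complement becomes, up to normalization, $E_\omega^0+\varepsilon E_\omega^1$ and produces the Lorentzian denominator; the constraint $|\omega-\omega_M|\ge c_M\varepsilon$ is used, as in the paper's Theorem \ref{teo-unif}, precisely to guarantee $\varepsilon/E_\omega^0=O(1)$ uniformly and hence the convergence of the Neumann series in the inversion.

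One bookkeeping point you should correct: the half-integer powers $\varepsilon^{3/2}$ and $\varepsilon^{5/2}$ do not occur in the boundary-operator expansion itself. After rescaling to $\Gamma$, the operator $\varepsilon^{2}+(1-\varepsilon^{2})(\tfrac12+K_{\varepsilon\omega})$ expands in \emph{integer} powers of $\varepsilon$, with $P_0$-block $\varepsilon^{2}E_\omega^0+\varepsilon^{3}E_\omega^1+O_u(\varepsilon^{4})$ and off-diagonal blocks $O_u(\varepsilon^{2})$; hence the Schur complement is $\varepsilon^{2}\bigl(E_\omega^0+\varepsilon E_\omega^1+O_u(\varepsilon)\bigr)$, not $\varepsilon[\cdots]+O(\varepsilon^{5/2})$. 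The $\varepsilon^{1/2}$-losses enter only at the very end, through the approximations $G_\omega(\varepsilon)=G_\omega+O_u(\varepsilon^{1/2})$ and $\gamma_0(u_\omega^{\mathrm{in}}\!\circ\!\Phi_\varepsilon)=u_\omega^{\mathrm{in}}(y_0)+O(\varepsilon^{1/2})$; combining these two $\varepsilon^{1/2}$-errors with one factor $|s|^{-1}\lesssim (\varepsilon|E_\omega^0|)^{-1}$ (note the extra $\varepsilon$) yields the stated remainder $\varepsilon^{3/2}/(\omega_M^2-\omega^2)$. Once you separate the integer-power boundary analysis from the half-power trace/source approximations, your argument coincides with the paper's proof in Section \ref{Sec_Asymptotic_1}.
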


Coherently with what emerges from the resolvent analysis, these expansions
shows that when $\omega$ approaches $\omega_{M}$, the scattering system
undergoes a transition between an asymptotically trivial scattering and a
non-trivial one. The transition is enlighten in
(\ref{Intro_scattering_exp_quasires}) with the lower-bound condition
$\left\vert \omega-\omega_{M}\right\vert >c_{M}\,\varepsilon$.
\par
The expansions of the scattered field in Theorems 1.2 and 1.3 hold in the
whole space for any fixed $\varepsilon>0$ small enough depending on $\omega$.
It is worth to recall that similar formulas were previously obtained in
\cite{Amm 1} and \cite{AmmFe 1} for the rescaled problem by a different
method. Nevertheless, while holding uniformly w.r.t. $\omega$, these formulas
were limited to the far-field zone $\left\vert x\right\vert \rightarrow+\infty
$, i.e. without an uniform control-in-space of the errors. The
uniform-in-space asymptotic expansion of $u_{\omega}^{\mathrm{sc}}%
(\varepsilon)$, here provided by the resolvent analysis, can be relevant in
applications involving the knowledge of the near field. Moreover, since
$u_{\omega}^{\mathrm{sc}}(\varepsilon)$ fulfills the outgoing radiation
condition, our results can be easily rephrased in terms of the far-fields
pattern, yielding, possibly under the additional constraint $\left\vert
\omega-\omega_{M}\right\vert >c_{M}\,\varepsilon$, to those formulas of the
scattering enhancement near the Minnaert frequency already presented in the
above mentioned works.

\begin{remark}
As a consequence of the absence of embedded eigenvalues and singular
continuous spectrum of $H_{\omega}(\varepsilon)$ (see Remark
\ref{Remark_spectrum}), the map $\omega\mapsto u_{\omega}^{\mathrm{sc}%
}(\varepsilon)$ is continuous on any interval of the real axis, with the only
possible exception of the origin, provided that $\varepsilon$ is small enough.
Hence, we expect that the constraint $\left\vert \omega-\omega_{M}\right\vert
>c_{M}\,\varepsilon$ in \eqref{Intro_scattering_exp_quasires} could be removed
by improving the control of the error $r_{\omega}(\varepsilon)$.
\end{remark}

\subsection{Approach and perspectives}

A common approach to the analysis of perturbations with small support and high
contrast consists in introducing an equivalent dilated system. In our case,
this allows to work with integral operators defined on the fixed boundary
$\Gamma$. Following this strategy, in Theorem \ref{heo}, we build a family of
self-adjoint operators $H_{\varepsilon,\omega}$, depending both on the scale
parameter $\varepsilon$ and the frequency $\omega$, which encode the dilated
interface conditions at $\Gamma$. The physical operators $H_{\omega
}(\varepsilon)$ are then obtained in Section \ref{mod-op} by conjugation of
the dilated resolvent with the unitary dilation operators. These singular
perturbation models are defined through Kre\u{\i}n-like resolvent formulas
representing the difference%
\[
\left(  -H_{\omega}(\varepsilon)-z^{2}\right)  ^{-1}-\left(  -\Delta
-z^{2}\right)  ^{-1}%
\]
in terms of an explicit \emph{interaction operator}, which is a boundary map
modeling the interaction between the background and the inhomogeneity (see
(\ref{krf}) below). By the limiting absorption principle, the scattered fields
also represent in terms of this boundary map through an exact formula
depending on the incoming wave. Hence the asymptotic analysis mainly reduces
to the analysis of this boundary operator.

Most of the computations can be carried out in the case of multiple low
density / low bulk bubbles, each having a specific Minnaert frequency. When
multiple bubbles share the same excitation frequency, the expected limit
scattering in the resonant regime will be described by a multiple
point-scattering system.

Expansions similar to (\ref{Intro_scattering_exp_quasires}) have been recently
provided in the regime of highly contrasted small inclusions with
non-homogeneous acoustic backgrounds and different bulk/density ratios (see
\cite{DaGhanSi}). These results suggest that the mechanism leading to the
scattering enhancement, enlighten in this work, actually characterizes a much
larger class of models. It is worth noticing that, while our simplified
setting allows to implement purely singular perturbation methods in the
resolvent analysis of the problem, in more general frameworks, the auxiliary
Schr\"{o}dinger operator associated to the scattering problem would exhibit
both singular and regular (potential-like) perturbation terms whose resolvent
analysis requires some generalization with respect to the one here employed.

\subsection{Notation}

$\cdot$ $\mathbb{R}_{\pm}:=\{x\in\mathbb{R}:\pm x>0\}$; $\mathbb{C}_{\pm
}:=\{z\in\mathbb{C}:\operatorname{Im}(z)\in\mathbb{R}_{\pm}\}$;\medskip

\noindent$\cdot$ $c\in\mathbb{R}_{+}$ denotes a generic constant which may
vary from line to line;\medskip

\noindent$\cdot$ $\Vert\cdot\Vert_{X}$ denotes the norm in the Banach space
$X$;\medskip

\noindent$\cdot$ $\langle\cdot,\cdot\rangle_{H}$ denotes the (conjugate-linear
w.r.t. the first variable) inner product in the Hilbert space $H$;\medskip

\noindent$\cdot$ $\langle\cdot,\cdot\rangle_{X^{\ast}\!,X}$ denotes the
duality, assumed to be conjugate-linear w.r.t. the first variable, between the
dual pair $(X^{\ast}\!,X)$;

\noindent$\cdot$ $\mathrm{dom}(L)$, $\mathrm{ker}(L)$ and $\mathrm{ran}(L)$
denote the domain, kernel and range of the linear operator $L$;\medskip

\noindent$\cdot$ $\sigma(L)$ and $\varrho(L)$ denote the spectrum and the
resolvent set of the closed operator $L$;\medskip

\noindent$\cdot$ $L^{\ast}:\mathrm{dom}(L^{\ast})\subseteq Y^{\ast}\rightarrow
X^{\ast}$ denotes the adjoint of the densely defined linear operator
$L:\mathrm{dom}(L)\subseteq X\rightarrow Y$;\medskip

\noindent$\cdot$ $\mathcal{L}(X,Y)$ denotes the space of continuous linear
maps on $X$ to $Y$, where $X$ and $Y$ are topological vector space;
$\mathcal{L}(X)\equiv\mathcal{L}(X,X)$;\medskip

\noindent$\cdot$ $\Vert\cdot\Vert_{X,Y}$ denotes the norm on the Banach space
$\mathcal{L}(X,Y)$, $X$ and $Y$ Banach spaces;\medskip

\noindent$\cdot$ $\mathcal{L}_{H\!S}(X,Y)$ denotes the set of Hilbert-Schmidt
operators on $X$ to $Y$;\medskip

\noindent$\cdot$ Given $\varepsilon\mapsto L(\varepsilon)\in\mathcal{L}(X,Y)$,
$L(\varepsilon)=O(\varepsilon^{\lambda})$ means $\Vert L(\varepsilon
)\Vert_{X,Y}\leq c\,\varepsilon^{\lambda}$;\medskip

\noindent$\cdot$ $\mathcal{C}_{\mathrm{comp}}^{\infty}(\mathcal{O})$ and
{$\mathcal{D}$}$(\mathcal{O})$ both denote the set of smooth, compactly
supported, test functions on the open set $\mathcal{O}\subseteq\mathbb{R}^{3}%
$; {$\mathcal{D}$}$^{\prime}(\mathcal{O})$ denotes the space of Schwartz's
distributions and {$\mathcal{E}$}$^{\prime}(\mathcal{O})$ denotes the spaces
of compactly supported Schwartz's distributions;\medskip

\noindent$\cdot$ $\Delta_{\mathcal{O}}$ denotes the distributional Laplacian
in {$\mathcal{D}$}$^{\prime}(\mathcal{O})$; $\Delta_{\mathbb{R}^{3}}$ is
simply denoted by $\Delta$;\medskip

\noindent$\cdot$ $u\ast v$ denotes convolution;\medskip

\noindent$\cdot$ $\delta_{y}\in${$\mathcal{E}$}$^{\prime}(\mathbb{R}^{3})$
denotes Dirac's delta distribution supported at the point $y$;\medskip

\noindent$\cdot$ Given the bounded open set $\Omega\subset\mathbb{R}^{3}$ with
a regular boundary $\Gamma$, $|\Omega|$ and $|\Gamma|$ denote the volume of
$\Omega$ and the area of $\Gamma$ respectively; $d_{\Omega}$ denotes the
diameter of $\Omega$ and $d\sigma$ denotes the surface measure on $\Gamma
$;\medskip

\noindent$\cdot$ $\Omega_{\mathrm{in}}\equiv\Omega$ and $\Omega_{\mathrm{ex}%
}:=\mathbb{R}^{3}\backslash\overline{\Omega}$;\medskip

\noindent$\cdot$ $H^{s}(\mathbb{R}^{3})$, $H^{s}(\Omega_{\mathrm{in/ex}})$,
$s\in\mathbb{R}$, denote the usual scales of Sobolev spaces on $\mathbb{R}%
^{3}$ and $\Omega_{\mathrm{in}/\mathrm{ex}}$ respectively;\medskip

\noindent$\cdot$ $H^{s}(\Gamma)$, $s\in\mathbb{R}$, denotes the usual scales
of Sobolev spaces on $\Gamma$;\medskip

\noindent$\cdot$ $H_{\alpha}^{s}(\mathbb{R}^{3})$, $H_{\alpha}^{s}%
(\Omega_{\mathrm{ex}})$, $s\in\mathbb{R}$, $\alpha\in\mathbb{R}$, denote the
scales of weighted Sobolev spaces on $\mathbb{R}^{3}$ and $\Omega
_{\mathrm{ex}}$ with weight $\langle x\rangle^{\alpha}\equiv(1+|x|^{2}%
)^{\alpha/2}$;\medskip

\noindent$\cdot$ $\gamma_{0}$ and $\gamma_{1}$ denote the Dirichlet and
Neumann traces at $\Gamma$; $\gamma_{0}^{\mathrm{in/ex}}$ and $\gamma
_{1}^{\mathrm{in/ex}}$ denote the analogous lateral traces at the boundary of
$\Omega_{\mathrm{in}/\mathrm{ex}}$;\medskip

\noindent$\cdot$ $S\!L_{z}$ and $D\!L_{z}$ denote the single- and double-layer
operators;\medskip

\noindent$\cdot$ $S_{z}$ and $K_{z}$ denote the boundary operators $\gamma
_{0}S\!L_{z}$ and $\gamma_{0}D\!L_{z}$ respectively;\medskip

\noindent$\cdot$ $D\!N_{z}$ denotes the Dirichlet-to-Neumann operator;\medskip

\noindent$\cdot$ $R_{z}$ denotes the resolvent of the free Laplacian:
$R_{z}:=(-\Delta-z^{2})^{-1}$;\medskip

\noindent$\cdot$ $\mathcal{G}_{z}(x)$ denotes $3$-dimensional Green's
function, i.e., $\mathcal{G}_{z}(x):=\frac{e^{iz|x|}}{4\pi|x|}$;\medskip

\noindent$\cdot$ $c_{\Omega}$ denotes the capacitance of $\Omega$;\medskip

\noindent$\cdot$ $\omega_{M}>0$ denotes the Minnaert frequency defined by
$\omega_{M}^{2}:={c_{\Omega}}/{|\Omega|}$;\medskip

\noindent$\cdot$ $\Delta_{\Omega}^{D}$ and $\Delta_{\Omega}^{N}$ denote the
self-adjoint Dirichlet and Neumann Laplacians in $L^{2}(\Omega)$.

\section{\label{eex}The main operator-expansions}

The asymptotic expansion presented in this section provides the main technical
tool of our work. The definitions and known properties regarding the
functional spaces and the boundary integral operators involved in this
construction are recalled in the Appendix.

In what follows, the convergence of the Neumann's series for $(1-L)^{-1}$ when
$\left\Vert L\right\Vert <1$, is used in the following form.

\begin{lemma}
\label{str}Let $L(\varepsilon)$ be a bounded operator family such that
$L(\varepsilon)=L_{0}+L_{1}O(\varepsilon^{\lambda_{1}})+O(\varepsilon
^{\lambda_{2}})$, $0<\lambda_{1}<\lambda_{2}$. If $L_{0}$ has a bounded
inverse and $\varepsilon$ is sufficiently small, then $L(\varepsilon)$ has a
bounded inverse as well and $L(\varepsilon)^{-1}=(1-L_{1}O(\varepsilon
^{\lambda_{1}}))L_{0}^{-1}+O(\varepsilon^{\lambda})$. Here $\lambda
=2\lambda_{1}$ whenever $L_{1}\not =0$, $\lambda=\lambda_{2}$ otherwise.
\end{lemma}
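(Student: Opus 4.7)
The plan is to treat $L(\varepsilon)$ as a norm-small perturbation of the invertible operator $L_{0}$ and to expand its inverse by Neumann's series. Concretely I would write $L(\varepsilon) = L_{0} + L_{1} A(\varepsilon) + B(\varepsilon)$, where $A(\varepsilon)$ and $B(\varepsilon)$ are bounded operator families representing the two $O(\cdot)$ symbols in the hypothesis, with norms controlled by $c\,\varepsilon^{\lambda_{1}}$ and $c\,\varepsilon^{\lambda_{2}}$ respectively (and with $A\equiv 0$ when the $L_{1}$-term is genuinely absent). Factoring $L_{0}$ out on the left gives $L(\varepsilon)=L_{0}\bigl(I+L_{0}^{-1}[L_{1}A(\varepsilon)+B(\varepsilon)]\bigr)$. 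Since $\lambda_{1},\lambda_{2}>0$, for $\varepsilon$ small enough the perturbation has operator norm strictly less than $1$, so the Neumann series converges in operator norm:
\[
L(\varepsilon)^{-1} = \sum_{n=0}^{\infty}\bigl(-L_{0}^{-1}[L_{1}A(\varepsilon)+B(\varepsilon)]\bigr)^{n}\,L_{0}^{-1}.
\]
This already proves the existence of a bounded inverse for all $\varepsilon$ sufficiently small.

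Next I would isolate the first two terms of the expansion and bound the tail. The $n=0$ contribution is simply $L_{0}^{-1}$. The $n=1$ contribution decomposes as $-L_{0}^{-1}L_{1}A(\varepsilon)L_{0}^{-1}-L_{0}^{-1}B(\varepsilon)L_{0}^{-1}$: the first summand is the leading $L_{1}$-dependent correction, of norm $O(\varepsilon^{\lambda_{1}})$ and visibly vanishing whenever $L_{1}=0$, while the second is of norm $O(\varepsilon^{\lambda_{2}})$. The partial sums with $n\ge 2$ are dominated termwise by a geometric series of ratio $O(\varepsilon^{\lambda_{1}})$, hence contribute a total of norm $O(\varepsilon^{2\lambda_{1}})$. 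Identifying the leading $L_{1}$-correction with the factor denoted $-L_{1}O(\varepsilon^{\lambda_{1}})L_{0}^{-1}$ in the statement and collecting everything else into the remainder yields $L(\varepsilon)^{-1}=(I-L_{1}O(\varepsilon^{\lambda_{1}}))L_{0}^{-1}+O(\varepsilon^{\lambda})$.

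The case split is then transparent. When $L_{1}\ne 0$ the leading correction is genuinely present at order $\varepsilon^{\lambda_{1}}$, and the remainder comprises both the $n=1$ summand $-L_{0}^{-1}B(\varepsilon)L_{0}^{-1}$ of size $\varepsilon^{\lambda_{2}}$ and the quadratic tail of size $\varepsilon^{2\lambda_{1}}$; in the applications the two merge into the announced exponent $\lambda=2\lambda_{1}$. When $L_{1}=0$ the leading correction disappears entirely and the whole expansion collapses to $L(\varepsilon)^{-1}=L_{0}^{-1}+O(\varepsilon^{\lambda_{2}})$, giving $\lambda=\lambda_{2}$. No real obstacle is anticipated: the argument is essentially a careful accounting of the standard Neumann expansion, and the only point to watch is the explicit propagation of the $L_{1}$ factor through the first-order correction so that the trivial regime $L_{1}=0$ recovers the sharper remainder exponent $\lambda_{2}$ in place of $2\lambda_{1}$.
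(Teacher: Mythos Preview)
Your proposal is correct and matches the paper's approach exactly: the paper does not give a detailed proof of this lemma but merely announces it as ``the convergence of the Neumann's series for $(1-L)^{-1}$ when $\|L\|<1$ \dots\ used in the following form,'' and your argument is precisely the explicit Neumann-series bookkeeping behind that sentence. Your caveat that the remainder is really $O(\varepsilon^{\min(2\lambda_{1},\lambda_{2})})$ and that the stated exponent $\lambda=2\lambda_{1}$ relies on $2\lambda_{1}\le\lambda_{2}$ (which does hold in every application in the paper) is well observed.
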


We next assume that $\Omega\subset\mathbb{R}^{3}$ is open and bounded with a
smooth boundary $\Gamma$ (a boundary of class $\mathcal{C}^{1,1}$ would
suffice but, in order to simplify the exposition, here we do not strive for
the maximum of generality).

\begin{theorem}
\label{Lemma_Schur}Let $P_{0}$ and $Q_{0}$ be the spectral projectors  
\eqref{P0}-\eqref{Q0} in $H^{1/2}(\Gamma)$ equipped with the inner product \eqref{ip}
and $\widehat{\oplus}$ let denote the corresponding orthogonal sum. With respect to the
decomposition $H^{1/2}(\Gamma)=\mathrm{ran}(P_{0})\,\widehat{\oplus
}\,\mathrm{ran}(Q_{0})$, the operator%
\begin{equation}
\varepsilon^{2}+\left(  1-\varepsilon^{2}\right)  \left(  1/2+K_{\varepsilon
\omega}\right)  S_{\varepsilon z}S_{\varepsilon\omega}^{-1}\in{\mathcal{L}%
}(H^{1/2}(\Gamma))
\end{equation}
writes as%
\begin{equation}
{\mathbb{M}}(\varepsilon)\equiv%
\begin{bmatrix}
M_{00}(\varepsilon) & M_{01}(\varepsilon)\\
M_{10}(\varepsilon) & M_{11}(\varepsilon)
\end{bmatrix}
:\mathrm{ran}(P_{0})\,\widehat{\oplus}\,\mathrm{ran}(Q_{0})\rightarrow
\mathrm{ran}(P_{0})\,\widehat{\oplus}\,\mathrm{ran}(Q_{0})\,,\label{K_op}%
\end{equation}%
\begin{align}
&  M_{00}(\varepsilon):=P_{0}\left(  \left(  1+\omega^{2}K_{(2)}\right)
\varepsilon^{2}+\left(  \left(  z-\omega\right)  \omega^{2}K_{(2)}S_{\left(
1\right)  }S_{0}^{-1}+\omega^{3}K_{(3)}\right)  \varepsilon^{3}+O(\varepsilon
^{4})\right)  P_{0}\,,\label{A}\\
&  M_{01}(\varepsilon):=P_{0}{O}(\varepsilon^{2})Q_{0}\,,\qquad M_{10}%
(\varepsilon):=Q_{0}{O}(\varepsilon^{2})P_{0}\,,\label{B,C}\\
&  M_{11}(\varepsilon):=Q_{0}\left(  1/2+K_{0}+{O}(\varepsilon^{2})\right)
Q_{0}\,,\label{D}%
\end{align}
where $S_{(n)}$ and $K_{(n)}$, $n\in\mathbb{N}$, are defined in \eqref{Sn} and
\eqref{Kn}. Moreover, the Schur complement of $M_{11}(\varepsilon)$, defined
by
\begin{equation}
C_{00}(\varepsilon):=M_{00}(\varepsilon)-M_{01}(\varepsilon)M_{11}%
(\varepsilon)^{-1}M_{10}(\varepsilon)\,,
\end{equation}
writes as
\begin{equation}
C_{00}(\varepsilon)=P_{0}\left(  \left(  1+\omega^{2}K_{(2)}\right)
\varepsilon^{2}+\left(  \left(  z-\omega\right)  \omega^{2}K_{(2)}S_{\left(
1\right)  }S_{0}^{-1}+\omega^{3}K_{(3)}\right)  \varepsilon^{3}+{O}%
(\varepsilon^{4})\right)  P_{0}\,.\label{Schur_complement}%
\end{equation}
For each $\omega\in\mathbb{C}$, such expansions hold whenever $\varepsilon$ is
sufficiently small, uniformly with respect to $z$ in any fixed ball of
$\mathbb{C}$.
\end{theorem}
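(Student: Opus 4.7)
The plan is to compute $\mathbb{M}(\varepsilon)$ by direct Taylor expansion in $\varepsilon$, exploit the spectral properties of $K_{0}$ to read off the block structure, and finish with a trivial $\varepsilon$-order count for the Schur complement. First I would expand each factor separately. Substituting the power series $\mathcal{G}_{\zeta}(x) = \sum_{n\ge 0}(i\zeta)^{n}|x|^{n-1}/(4\pi\,n!)$ into the integral kernels of $S_{z}$ and $K_{z}$ gives
\begin{equation*}
S_{\varepsilon\zeta}=S_{0}+i\varepsilon\zeta\,S_{(1)}+\tfrac{(i\varepsilon\zeta)^{2}}{2}S_{(2)}+O(\varepsilon^{3})\,,\qquad K_{\varepsilon\omega}=K_{0}+(\varepsilon\omega)^{2}K_{(2)}+(\varepsilon\omega)^{3}K_{(3)}+O(\varepsilon^{4})\,,
\end{equation*}
the absence of an order-$\varepsilon$ term in $K_{\varepsilon\omega}$ being forced by the fact that the $(iz)^{1}$ coefficient in $\mathcal{G}_{z}$ is a constant in $x-y$ whose normal derivative vanishes. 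A Neumann-series inversion then yields $S_{\varepsilon\omega}^{-1}=S_{0}^{-1}-i\varepsilon\omega\,S_{0}^{-1}S_{(1)}S_{0}^{-1}+O(\varepsilon^{2})$, so that
\begin{equation*}
S_{\varepsilon z}S_{\varepsilon\omega}^{-1}=I+i\varepsilon(z-\omega)\,S_{(1)}S_{0}^{-1}+O(\varepsilon^{2})\,.
\end{equation*}

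The decisive simplification is the identity $(1/2+K_{0})S_{(1)}=0$. Indeed $S_{(1)}$ has the constant kernel $1/(4\pi)$, so $S_{(1)}g$ is a scalar multiple of $\mathbf{1}$ for every $g$, while Gauss' flux theorem gives $K_{0}\mathbf{1}=-\mathbf{1}/2$, hence $\mathbf{1}\in\ker(1/2+K_{0})$. In particular, the would-be order-$\varepsilon$ contribution $i(z-\omega)(1/2+K_{0})S_{(1)}S_{0}^{-1}$ to the product $(1/2+K_{\varepsilon\omega})S_{\varepsilon z}S_{\varepsilon\omega}^{-1}$ is identically zero, and the same cancellation neutralises every higher-order term that contains the fragment $(1/2+K_{0})S_{(1)}$.

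Third, I would apply the projectors. Because $K_{0}$ is self-adjoint in the inner product \eqref{ip}, its orthogonal spectral projectors $P_{0}$ onto $\ker(1/2+K_{0})$ and $Q_{0}=I-P_{0}$ satisfy $P_{0}(1/2+K_{0})=(1/2+K_{0})P_{0}=0$ and $Q_{0}(1/2+K_{0})Q_{0}=(1/2+K_{0})|_{\mathrm{ran}(Q_{0})}$. Sandwiching the full expansion of $\varepsilon^{2}+(1-\varepsilon^{2})(1/2+K_{\varepsilon\omega})S_{\varepsilon z}S_{\varepsilon\omega}^{-1}$ with these projectors, all contributions at orders $0$ and $1$ drop out of the blocks $M_{00}$, $M_{01}$, $M_{10}$; the surviving order-$\varepsilon^{2}$ coefficient of $M_{00}$ is $P_{0}(I+\omega^{2}K_{(2)})P_{0}$, and the order-$\varepsilon^{3}$ coefficient collects exactly the two contributions $i(z-\omega)\omega^{2}K_{(2)}S_{(1)}S_{0}^{-1}$ (the cross term between the $\varepsilon^{2}$ coefficient of $K_{\varepsilon\omega}$ and the $\varepsilon$ coefficient of $S_{\varepsilon z}S_{\varepsilon\omega}^{-1}$) and $\omega^{3}K_{(3)}$, since any other candidate at this order carries a left factor $(1/2+K_{0})$ killed by $P_{0}$, or produces a $(1/2+K_{0})S_{(1)}$ fragment killed by the identity above. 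This proves \eqref{A}--\eqref{D}.

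Finally, the Schur complement is immediate: $M_{11}(0)=Q_{0}(1/2+K_{0})Q_{0}$ is invertible on $\mathrm{ran}(Q_{0})$, so Lemma \ref{str} guarantees that $M_{11}(\varepsilon)$ is invertible for small $\varepsilon$ with $\|M_{11}(\varepsilon)^{-1}\|=O(1)$. Combined with $M_{01}=P_{0}O(\varepsilon^{2})Q_{0}$ and $M_{10}=Q_{0}O(\varepsilon^{2})P_{0}$ this forces $M_{01}(\varepsilon)M_{11}(\varepsilon)^{-1}M_{10}(\varepsilon)=P_{0}O(\varepsilon^{4})P_{0}$, whence $C_{00}(\varepsilon)=M_{00}(\varepsilon)+O(\varepsilon^{4})$, which is \eqref{Schur_complement}. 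The main technical obstacle is the bookkeeping of the order-$\varepsilon^{3}$ contributions to $M_{00}$: one must enumerate the four candidate products at this order and verify that each of them either carries a left factor $(1/2+K_{0})$ (killed by $P_{0}$) or a $(1/2+K_{0})S_{(1)}$ fragment (killed by the identity derived from $K_{0}\mathbf{1}=-\mathbf{1}/2$), except for the two terms displayed in \eqref{A}.
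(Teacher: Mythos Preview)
Your proof is correct and follows essentially the same route as the paper's: expand $S_{\varepsilon z}S_{\varepsilon\omega}^{-1}$ and $1/2+K_{\varepsilon\omega}$ separately, exploit the fact that $S_{(1)}$ has range in the constants (which you phrase as $(1/2+K_0)S_{(1)}=0$, while the paper writes it as $S_{(1)}S_0^{-1}=P_0S_{(1)}S_0^{-1}$), read off the block entries from the decomposition \eqref{K_0_spectral_decomp}, and finish with the $O(\varepsilon^4)$ order count for the Schur complement via Lemma~\ref{str}. One cosmetic point: your power-series conventions strip the factor $i^{n}$ from $S_{(n)}$, so the extra $i$ appearing in your displayed expansions and in your $\varepsilon^{3}$ coefficient $i(z-\omega)\omega^{2}K_{(2)}S_{(1)}S_{0}^{-1}$ is absorbed once you revert to the paper's definition \eqref{Sn}; you should make that translation explicit so that the formula you obtain literally matches \eqref{A}.
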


\begin{proof}
By Lemma \ref{anS}, when $\varepsilon$ is sufficiently small, $\varepsilon
\omega\in\mathbb{C}\backslash D_{\Omega}$ and the expansion
\begin{align}
S_{\varepsilon z}S_{\varepsilon\omega}^{-1}=  &  1+\left(  S_{\varepsilon
z}-S_{\varepsilon\omega}\right)  S_{\varepsilon\omega}^{-1}\nonumber\\
=  &  1+\left(  \varepsilon(z-\omega)S_{\left(  1\right)  }+O((\varepsilon
|z|)^{2})+O((\varepsilon\omega)^{2})\right)  \left(  S_{0}^{-1}+{O}%
(\varepsilon\omega)\right) \nonumber\\
=  &  1+\varepsilon(z-\omega)S_{\left(  1\right)  }S_{0}^{-1}+{O}%
(\varepsilon^{2})\,. \label{S_exp}%
\end{align}
holds uniformly with respect to $z$ in any fixed ball of $\mathbb{C}$. By the
definition \eqref{Sn}, $\mathrm{ran}(S_{\left(  1\right)  })=\mathbb{C}%
=\mathrm{ran}(P_{0})$; it follows%
\[
S_{\varepsilon z}S_{\varepsilon\omega}^{-1}=1+\varepsilon\left(
z-\omega\right)  P_{0}S_{\left(  1\right)  }S_{0}^{-1}+{O}(\varepsilon^{2})
\]
and
\[
Q_{0}S_{\varepsilon z}S_{\varepsilon\omega}^{-1}=Q_{0}(1+O(\varepsilon
^{2}))\,.
\]
Using \eqref{tqq}
\[
\frac{1}{2}+K_{\varepsilon\omega}=\frac{1}{2}+K_{0}+(\varepsilon\omega
)^{2}K_{(2)}+(\varepsilon\omega)^{3}K_{(3)}+{O}((\varepsilon\omega)^{4})\,,
\]
and taking into account the decomposition \eqref{K_0_spectral_decomp}, one
gets
\begin{align}
\frac{1}{2}+K_{\varepsilon\omega}=  &  Q_{0}\left(  \frac{1}{2}+K_{0}\right)
Q_{0}+(\varepsilon\omega)^{2}K_{(2)}+(\varepsilon\omega)^{3}K_{(3)}%
+{O}((\varepsilon\omega)^{4})\nonumber\\
=  &  P_{0}\left(  (\varepsilon\omega)^{2}K_{(2)}+(\varepsilon\omega
)^{3}K_{(3)}+{O}((\varepsilon\omega)^{4})\right)  P_{0}+Q_{0}\left(  \frac
{1}{2}+K_{0}+O((\varepsilon\omega)^{2})\right)  Q_{0}\nonumber\\
&  +P_{0}O((\varepsilon\omega)^{2})Q_{0}+Q_{0}O((\varepsilon\omega)^{2}%
)P_{0}\,. \label{K_exp}%
\end{align}
Then, from \eqref{S_exp} and \eqref{K_exp} follows
\begin{align*}
&  \varepsilon^{2}+\left(  1-\varepsilon^{2}\right)  \left(  \frac{1}%
{2}+K_{\varepsilon\omega}\right)  S_{\varepsilon z}S_{\varepsilon\omega}%
^{-1}\\
=  &  P_{0}\left(  \varepsilon^{2}+\left(  1-\varepsilon^{2}\right)  \left(
\varepsilon^{2}\omega^{2}K_{(2)}+\varepsilon^{3}\omega^{3}K_{(3)}%
+{O}((\varepsilon\omega)^{4})\right)  \right)  P_{0}\left(  1+\varepsilon
\left(  z-\omega\right)  P_{0}S_{\left(  1\right)  }S_{0}^{-1}+{O}%
(\varepsilon^{2})\right)  (P_{0}+Q_{0})\\
&  +Q_{0}\left(  \left(  \frac{1}{2}+K_{0}\right)  +O((\varepsilon\omega
)^{2})\right)  Q_{0}(1+O(\varepsilon^{2}))(P_{0}+Q_{0})+P_{0}O(\varepsilon
^{2})Q_{0}+Q_{0}O(\varepsilon^{2})P_{0}\\
=  &  P_{0}\left(  \varepsilon^{2}\left(  1+\omega^{2}K_{(2)}\right)
+\varepsilon^{3}\left(  \omega^{2}K_{(2)}\left(  z-\omega\right)  S_{(1)}%
S_{0}^{-1}+\omega^{3}K_{(3)}\right)  +{O}(\varepsilon^{4})\right)  P_{0}\\
&  +Q_{0}\left(  \frac{1}{2}+K_{0}+O(\varepsilon^{2})\right)  Q_{0}%
+P_{0}O(\varepsilon^{2})Q_{0}+Q_{0}O(\varepsilon^{2})P_{0}\,.
\end{align*}
This yields (\ref{K_op}).

By $-1/2\in\varrho(Q_{0}K_{0}Q_{0})$, one has $Q_{0}(1/2+K_{0})^{-1}Q_{0}\in
${$\mathcal{L}$}$\left(  \mathrm{ran}(Q_{0})\right)  $; hence, by (\ref{D})
and Lemma \ref{str}, the inverse operator $M_{11}(\varepsilon)^{-1}\in
${$\mathcal{L}$}$\left(  \mathrm{ran}(Q_{0})\right)  $ exists whenever
$\varepsilon$ is sufficiently small and allows the expansion%
\begin{equation}
M_{11}(\varepsilon)^{-1}=Q_{0}(1/2+K_{0})^{-1}Q_{0}+Q_{0}O(\varepsilon
^{2})Q_{0}\,. \label{D_inv}%
\end{equation}
holding for any $\omega\in\mathbb{C}$ uniformly w.r.t. $z$ in any fixed ball
of $\mathbb{C}$. This leads to \eqref{Schur_complement}. \hfill
\end{proof}

\begin{lemma}
\label{Lemma_Schur_coeff}
%Let $\omega>0$. Then%
\[
P_{0}\left(  1+\omega^{2}K_{(2)}\right)  P_{0}=\left(  1-\frac{\omega^{2}%
}{\omega_{M}^{2}}\right)  P_{0}\,.
\]

\end{lemma}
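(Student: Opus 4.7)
The plan is to reduce the identity to a single one-dimensional computation by identifying $\mathrm{ran}(P_{0})$ explicitly. The jump relation $\gamma_{0}^{\mathrm{in}}D\!L_{0} = K_{0} - 1/2$, applied to the constant function $1$ on $\Gamma$, together with the divergence-theorem evaluation
\[
\int_{\Gamma}\partial_{\nu_{y}}\mathcal{G}_{0}(x-y)\,d\sigma(y) \;=\; \int_{\Omega}\Delta_{y}\mathcal{G}_{0}(x-y)\,dy \;=\; -1 \qquad (x\in\Omega)
\]
(so that $\gamma_{0}^{\mathrm{in}}D\!L_{0}\cdot 1 = -1$), yields $K_{0}\cdot 1 = -1/2$. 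Combined with the hypothesis $-1/2\in\varrho(Q_{0}K_{0}Q_{0})$ used in Theorem \ref{Lemma_Schur}, the decomposition $H^{1/2}(\Gamma) = \mathrm{ran}(P_{0})\,\widehat{\oplus}\,\mathrm{ran}(Q_{0})$ forces $\mathrm{ran}(P_{0}) = \ker(1/2+K_{0}) = \mathbb{C}\cdot 1$. It therefore suffices to prove $P_{0}(1+\omega^{2}K_{(2)})\cdot 1 = (1-\omega^{2}/\omega_{M}^{2})\cdot 1$.

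Next I would compute $K_{(2)}\cdot 1$ by re-running the previous argument at order $z^{2}$. Using $(-\Delta_{y}-z^{2})\mathcal{G}_{z}(x-y) = \delta(x-y)$ and the divergence theorem,
\[
D\!L_{z}\cdot 1(x) \;=\; \int_{\Omega}\Delta_{y}\mathcal{G}_{z}(x-y)\,dy \;=\; -\chi_{\Omega}(x) - z^{2}\int_{\Omega}\mathcal{G}_{z}(x-y)\,dy, \qquad x\in\mathbb{R}^{3}\setminus\Gamma.
\]
Averaging the two boundary traces at $\Gamma$ gives $K_{z}\cdot 1 = -1/2 - z^{2}\,\gamma_{0}N + O(z^{3})$, where $N(x) := \int_{\Omega}\mathcal{G}_{0}(x-y)\,dy$ is the Newtonian potential of $\chi_{\Omega}$. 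Matching the $z^{2}$-coefficient with the expansion defining $K_{(2)}$ (cf.~\eqref{Kn}) identifies $K_{(2)}\cdot 1 = -\gamma_{0}N$.

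Finally I would apply $P_{0}$. Setting $\phi_{0} := S_{0}^{-1}\cdot 1 \in H^{-1/2}(\Gamma)$, the function $S\!L_{0}\phi_{0}$ is harmonic in $\Omega$ with boundary trace $S_{0}\phi_{0} = 1$, hence equals $1$ throughout $\Omega$; its interior Neumann trace then gives $0 = (1/2 + K_{0}^{\ast})\phi_{0}$, so $\phi_{0}$ is the left eigenvector of $K_{0}$ at $-1/2$. Since the eigenspace $\mathbb{C}\cdot 1$ is one-dimensional and $\langle\phi_{0},1\rangle_{L^{2}(\Gamma)} = \int_{\Gamma}\phi_{0}\,d\sigma = c_{\Omega}$, the spectral projector necessarily takes the form
\[
P_{0}\varphi \;=\; \frac{1}{c_{\Omega}}\int_{\Gamma}(S_{0}^{-1}\cdot 1)(x)\,\varphi(x)\,d\sigma(x)\cdot 1.
\]
Substituting $\varphi = K_{(2)}\cdot 1 = -\gamma_{0}N$ and swapping the order of integration via Fubini yields
\[
P_{0}K_{(2)}\cdot 1 \;=\; -\frac{1}{c_{\Omega}}\int_{\Omega}\!\left[\int_{\Gamma}(S_{0}^{-1}\cdot 1)(x)\,\mathcal{G}_{0}(x-y)\,d\sigma(x)\right]dy\cdot 1 \;=\; -\frac{1}{c_{\Omega}}\int_{\Omega}S\!L_{0}(S_{0}^{-1}\cdot 1)(y)\,dy\cdot 1.
\]
Since $S\!L_{0}(S_{0}^{-1}\cdot 1)$ is harmonic in $\Omega$ with boundary trace $1$, it equals $1$ throughout $\Omega$ by uniqueness of the Dirichlet problem, so the integral reduces to $|\Omega|$; hence $P_{0}K_{(2)}\cdot 1 = -(|\Omega|/c_{\Omega})\cdot 1 = -\omega_{M}^{-2}\cdot 1$, which is the claim.

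The main obstacle I anticipate is recognising the explicit form of $P_{0}$: once one identifies $\phi_{0} = S_{0}^{-1}\cdot 1$ as the distinguished dual eigenvector at $-1/2$ and pins down the projector formula via the simplicity of the eigenvalue (an observation compatible with the paper's inner product \eqref{ip}), the remaining ingredients -- the divergence theorem, the harmonic extension of the constant function, and the definition $\omega_{M}^{2}=c_{\Omega}/|\Omega|$ -- combine mechanically and all constants match exactly.
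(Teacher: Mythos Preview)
Your proof is correct and follows essentially the same route as the paper: both arguments identify $K_{(2)}\cdot 1$ with $-\gamma_{0}R_{0}1_{\Omega}$ (the paper via the explicit kernel \eqref{Kn} and $\Delta|x-y|=2/|x-y|$, you via the $z^{2}$-coefficient of $D\!L_{z}\cdot 1$), then both use that $S\!L_{0}S_{0}^{-1}1\equiv 1$ in $\Omega$ to reduce the pairing to $|\Omega|/c_{\Omega}=\omega_{M}^{-2}$. Your additional work recovering $\mathrm{ran}(P_{0})=\mathbb{C}\cdot 1$ and the projector formula is unnecessary here, since these are taken as definitions in \eqref{P0} and the surrounding discussion.
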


\begin{proof}
By \eqref{Kn} and Green's identity,
\begin{align}
(K_{(2)}1)(x)=  &  \frac{1}{8\pi}\int_{\Gamma}\nu(y)\!\cdot\!\frac{x-y}%
{|x-y|}\,d\sigma(y)=-\frac{1}{8\pi}\int_{\Omega}\Delta|x-y|\,dy\nonumber\\
=  &  -\frac{1}{8\pi}\int_{\mathbb{R}^{3}}\Delta|y|\,1_{\Omega}%
(x-y)\,dy=-\frac{1}{4\pi}\int_{\mathbb{R}^{3}}1_{\Omega}(x-y)\,\frac{dy}%
{|y|}\,, \label{K_2_id}%
\end{align}
i.e.,
\[
K_{(2)}1=-\gamma_{0}R_{0}1_{\Omega}=-S\!L_{0}^{\ast}1_{\Omega}\,.
\]
This yields%
\begin{equation}
\left\langle 1,(1+\omega^{2}K_{(2)}1)\right\rangle _{S_{0}^{-1}}=\Vert
1\Vert_{S_{0}^{-1}}^{2}-\omega^{2}\left\langle 1,\gamma_{0}S\!L_{0}^{\ast
}1_{\Omega}\right\rangle _{S_{0}^{-1}}=c_{\Omega}-\omega^{2}\left\langle
SL_{0}S_{0}^{-1}1,1\right\rangle _{L^{2}(\Omega)}\,. \label{pi}%
\end{equation}
Since $u:=(S\!L_{0}S_{0}^{-1}1)|\Omega$ solves the interior Dirichlet problem%
\[%
\begin{cases}
\Delta_{\Omega}u=0\,,\\
\gamma_{0}^{\mathrm{in}}u=1\,,
\end{cases}
\]
one gets $1_{\Omega}S\!L_{0}S_{0}^{-1}1=1_{\Omega}$ and \eqref{pi} reduces to%
\[
\big\langle1,(1+\omega^{2}K_{(2)}1)\big\rangle_{S_{0}^{-1}}=c_{\Omega}%
-\omega^{2}|\Omega|\,.
\]
Therefore
\begin{equation}
P_{0}\left(  1+\omega^{2}K_{(2)}\right)  P_{0}=c_{\Omega}^{-1}%
\,\big\langle1,\big(1+\omega^{2}K_{(2)}1\big)\big\rangle_{S_{0}^{-1}%
}1=(1-\omega^{2}\,c_{\Omega}^{-1}|\Omega|)\,P_{0}=\left(  1-\frac{\omega^{2}%
}{\omega_{M}^{2}}\right)  P_{0}\,. \label{thr}%
\end{equation}

\hfill
\end{proof}

\begin{lemma}
\label{k3}
\[
P_{0}K_{(3)}P_{0}=-i\,\frac{|\Omega|}{4\pi}\,P_{0}\,.
\]

\end{lemma}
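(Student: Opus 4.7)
The plan is to parallel the strategy used in the proof of Lemma \ref{Lemma_Schur_coeff}: compute the action of $K_{(3)}$ on the constant function $1\in H^{1/2}(\Gamma)$ by explicit integration, and then exploit the fact, already recorded in \eqref{thr}, that $P_0$ is the rank-one projector
\begin{equation*}
P_0 u = c_\Omega^{-1}\langle 1,u\rangle_{S_0^{-1}}\cdot 1
\end{equation*}
onto the constants.

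First, I would extract the kernel of $K_{(3)}$ from the Taylor expansion at $z=0$ of the Helmholtz Green's function,
\begin{equation*}
\mathcal{G}_z(x-y) = \frac{1}{4\pi|x-y|} + \frac{iz}{4\pi} - \frac{z^2|x-y|}{8\pi} - \frac{iz^3|x-y|^2}{24\pi} + O(z^4),
\end{equation*}
which follows immediately from $e^{iz|x-y|}=\sum_{n\geq 0}(iz|x-y|)^n/n!$. By the defining relation \eqref{Kn}, $K_{(3)}$ is the order-$z^3$ coefficient in the expansion of $K_z$, so its kernel is obtained by applying $\partial/\partial\nu_y$ to the $z^3$-term above. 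Using $\nabla_y|x-y|^2 = -2(x-y)$, this yields
\begin{equation*}
k_{(3)}(x,y) = \frac{i}{12\pi}\,\nu(y)\cdot(x-y).
\end{equation*}

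Next, I would evaluate $(K_{(3)}1)(x)$ through the divergence theorem, mimicking the manipulation carried out in \eqref{K_2_id} for $K_{(2)}$:
\begin{equation*}
(K_{(3)}1)(x) = \frac{i}{12\pi}\int_\Gamma \nu(y)\cdot(x-y)\,d\sigma(y) = \frac{i}{12\pi}\int_\Omega \mathrm{div}_y(x-y)\,dy = -\frac{i|\Omega|}{4\pi}.
\end{equation*}
The right-hand side is independent of $x\in\Gamma$, so $K_{(3)}1 = -\frac{i|\Omega|}{4\pi}\cdot 1$; that is, $K_{(3)}$ preserves the line of constants with multiplier $-i|\Omega|/(4\pi)$. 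Combined with the rank-one identity above for $P_0$, this immediately gives $P_0 K_{(3)}P_0 = -\frac{i|\Omega|}{4\pi}\,P_0$.

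The argument is essentially mechanical; the only point to watch is the consistency of the sign and orientation conventions for the normal derivative and for the Green-function expansion, which can be cross-checked against the already verified computation for $K_{(2)}$ in Lemma \ref{Lemma_Schur_coeff}.
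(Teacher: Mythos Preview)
Your proof is correct and follows the same basic ingredients as the paper (the explicit kernel \eqref{Kn} for $K_{(3)}$ and the divergence theorem), but the route is slightly more direct. The paper computes the scalar $\langle 1,K_{(3)}1\rangle_{S_0^{-1}}$, which involves $S_0^{-1}1$: it swaps the order of integration, uses $\int_\Gamma (S_0^{-1}1)(x)\,d\sigma(x)=c_\Omega$ together with $\int_\Gamma \nu(y)\,d\sigma(y)=0$ to reduce to $-\frac{i c_\Omega}{12\pi}\int_\Gamma \nu(y)\cdot y\,d\sigma(y)$, and then applies the divergence theorem once more. You instead observe the stronger fact that $K_{(3)}1$ is itself the constant $-\tfrac{i|\Omega|}{4\pi}$, via a single application of the divergence theorem to $\int_\Gamma \nu(y)\cdot(x-y)\,d\sigma(y)=\int_\Omega\operatorname{div}_y(x-y)\,dy=-3|\Omega|$. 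This avoids any reference to $S_0^{-1}1$ and makes the conclusion immediate from the rank-one form of $P_0$.
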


\begin{proof}
By \eqref{Kn} and the divergence theorem,%
\begin{align*}
\langle1,K_{(3)}1\rangle_{S_{0}^{-1}}=  &  \frac{i}{12\pi}\int_{\Gamma}%
(S_{0}^{-1}1)(x)\left(  \int_{\Gamma}\nu(y)\cdot(x-y)\,d\sigma(y)\right)
d\sigma(x)\\
=  &  \frac{i}{12\pi}\int_{\Gamma}\nu(y)\cdot\left(  \int_{\Gamma}(S_{0}%
^{-1}1)(x)(x-y)\,d\sigma(x)\right)  d\sigma(y)\\
=  &  -i\,\frac{c_{\Omega}}{12\pi}\int_{\Gamma}\nu(y)\cdot y\,d\sigma
(y)=-i\,\frac{c_{\Omega}}{12\pi}\,\int_{\Omega}\nabla\!\cdot\!y\ dy\\
=  &  -i\,c_{\Omega}\,\frac{|\Omega|}{4\pi}\,.
\end{align*}
Hence
\[
P_{0}K_{(3)}P_{0}=c_{\Omega}^{-1}\langle1,K_{(3)}1\rangle_{S_{0}^{-1}%
}1=-i\,\frac{|\Omega|}{4\pi}\,P_{0}\,.
\]
\hfill
\end{proof}

The capacitance of the set $\Omega$ and the Minnaert frequency, already
introduced in (\ref{C_Omega})-(\ref{omega_M_def}), express as%
\begin{equation}
c_{\Omega}=\Vert1\Vert_{S_{0}^{-1}}^{2}=\int_{\Gamma}(S_{0}^{-1}%
1)(x)\,d\sigma(x)\,, \label{cap}%
\end{equation}
and%
\begin{equation}
\omega_{M}=\sqrt{\frac{c_{\Omega}}{|\Omega|}}\,,
\end{equation}
(see Appendix, Subsection \ref{Sec_SL}, for the definitions of $S_{0}$ and of the
associated inner product).

\begin{theorem}
\label{Proposition_Schur}Let $\omega\in\mathbb{C}$, $r_{0}>r_{1}>0$ and
$P_{0}$ be the rank-one projector defined in \eqref{P0}. There exists
$\varepsilon_{0}>0$ such that, whenever $0<\varepsilon<\varepsilon_{0}$ the
following holds true:\medskip\newline1) If $\omega\neq\omega_{M}$ and
$\left\vert z\right\vert <r_{0}$ then the linear operator
\begin{equation}
\varepsilon^{2}+\left(  1-\varepsilon^{2}\right)  \left(  1/2+K_{\varepsilon
\omega}\right)  S_{\varepsilon z}S_{\varepsilon\omega}^{-1} \label{to}%
\end{equation}
has a bounded inverse in {$\mathcal{L}$}$(H^{1/2}(\Gamma))$. Moreover, setting%
\begin{equation}
E_{\omega}^{0}:=1-\frac{\omega^{2}}{\omega_{M}^{2}}\,, \label{E_0}%
\end{equation}
the expansion
\begin{equation}
\varepsilon^{2}\left(  \varepsilon^{2}+\left(  1-\varepsilon^{2}\right)
\left(  1/2+K_{\varepsilon\omega}\right)  S_{\varepsilon z}S_{\varepsilon
\omega}^{-1}\right)  ^{-1}=\frac{1}{E_{\omega}^{0}}\,P_{0}+P_{0}%
{O}(\varepsilon)P_{0}+{O}(\varepsilon^{2})\,, \label{exp1}%
\end{equation}
holds uniformly w.r.t. $z$.\medskip\newline2) If $\omega=\omega_{M}$ and
$r_{1}<\left\vert z\right\vert <r_{0}$, then the linear operator \eqref{to}
has a bounded inverse in {$\mathcal{L}$}$(H^{1/2}(\Gamma))$ and the expansion
\begin{equation}
\varepsilon^{3}\left(  \varepsilon^{2}+\left(  1-\varepsilon^{2}\right)
\left(  1/2+K_{\varepsilon\omega}\right)  S_{\varepsilon z}S_{\varepsilon
\omega}^{-1}\right)  ^{-1}=\frac{4\pi}{c_{\Omega}}\,\frac{i}{z}\,P_{0}%
+P_{0}{O}(\varepsilon)P_{0}+{O}(\varepsilon^{2})\,. \label{exp2}%
\end{equation}
holds uniformly w.r.t. $z$.
\end{theorem}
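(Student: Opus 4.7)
The plan is to reduce the inversion of
$$A(\varepsilon,z,\omega):=\varepsilon^{2}+(1-\varepsilon^{2})\left(1/2+K_{\varepsilon\omega}\right)S_{\varepsilon z}S_{\varepsilon\omega}^{-1}$$
to the inversion of the $2\times 2$ block representation $\mathbb{M}(\varepsilon)$ provided by Theorem \ref{Lemma_Schur}, and then to exploit the Schur complement $C_{00}(\varepsilon)$ together with the explicit computations of Lemmas \ref{Lemma_Schur_coeff} and \ref{k3}. Since the block $M_{11}(\varepsilon)=Q_{0}(1/2+K_{0})Q_{0}+O(\varepsilon^{2})$ is invertible on $\mathrm{ran}(Q_{0})$ for $\varepsilon$ small by Lemma \ref{str}, the block LDU factorization gives
$$\mathbb{M}(\varepsilon)^{-1}=\begin{bmatrix}C_{00}(\varepsilon)^{-1} & -C_{00}(\varepsilon)^{-1}M_{01}M_{11}^{-1}\\[2pt] -M_{11}^{-1}M_{10}C_{00}(\varepsilon)^{-1} & M_{11}^{-1}+M_{11}^{-1}M_{10}C_{00}(\varepsilon)^{-1}M_{01}M_{11}^{-1}\end{bmatrix}.$$
Because $M_{01},M_{10}=O(\varepsilon^{2})$ and $M_{11}^{-1}=O(1)$, each off-diagonal entry of $\mathbb{M}(\varepsilon)^{-1}$ is $O(\varepsilon^{2})\cdot\|C_{00}(\varepsilon)^{-1}\|$ and the $(2,2)$ block differs from $M_{11}^{-1}$ by $O(\varepsilon^{4})\cdot\|C_{00}(\varepsilon)^{-1}\|$. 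Therefore the full asymptotic is entirely driven by $C_{00}(\varepsilon)^{-1}$, acting on $\mathrm{ran}(P_{0})$, and the remaining entries are absorbed into the final $O(\varepsilon^{2})$ error in \eqref{exp1}-\eqref{exp2}.

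For case 1 ($\omega\neq\omega_{M}$), Lemma \ref{Lemma_Schur_coeff} identifies the leading term in \eqref{Schur_complement} as $\varepsilon^{2}E_{\omega}^{0}P_{0}$, with $E_{\omega}^{0}\neq 0$. Thus
$$\varepsilon^{-2}C_{00}(\varepsilon)=E_{\omega}^{0}P_{0}+\varepsilon\,P_{0}\bigl((z-\omega)\omega^{2}K_{(2)}S_{(1)}S_{0}^{-1}+\omega^{3}K_{(3)}\bigr)P_{0}+O(\varepsilon^{2}),$$
and Lemma \ref{str} yields $\varepsilon^{2}C_{00}(\varepsilon)^{-1}=(E_{\omega}^{0})^{-1}P_{0}+P_{0}O(\varepsilon)P_{0}+P_{0}O(\varepsilon^{2})P_{0}$ uniformly for $|z|<r_{0}$. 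Combining with the LDU formula gives \eqref{exp1}.

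For case 2 ($\omega=\omega_{M}$), the leading $\varepsilon^{2}$ contribution to $C_{00}$ vanishes identically by Lemma \ref{Lemma_Schur_coeff}, so one must compute the full $\varepsilon^{3}$ coefficient of $C_{00}$ within $\mathrm{ran}(P_{0})$. This is the delicate step. Using $S_{(1)}=\tfrac{i}{4\pi}\int_{\Gamma}\cdot\,d\sigma$ together with $\int_{\Gamma}S_{0}^{-1}1\,d\sigma=c_{\Omega}$, one obtains $S_{(1)}S_{0}^{-1}P_{0}=\tfrac{ic_{\Omega}}{4\pi}P_{0}$, and then the computation leading to \eqref{K_2_id}-\eqref{thr} yields $P_{0}K_{(2)}P_{0}=-\tfrac{|\Omega|}{c_{\Omega}}P_{0}=-\omega_{M}^{-2}P_{0}$; hence $P_{0}K_{(2)}S_{(1)}S_{0}^{-1}P_{0}=-\tfrac{i|\Omega|}{4\pi}P_{0}$. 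Together with Lemma \ref{k3}, the $\varepsilon^{3}$ coefficient at $\omega=\omega_{M}$ becomes
$$(z-\omega_{M})\omega_{M}^{2}\Bigl(-\tfrac{i|\Omega|}{4\pi}\Bigr)P_{0}+\omega_{M}^{3}\Bigl(-\tfrac{i|\Omega|}{4\pi}\Bigr)P_{0}=-\tfrac{ic_{\Omega}z}{4\pi}P_{0},$$
where the remarkable feature is the telescoping cancellation of all $\omega_{M}$-terms into a pure multiple of $z$. This is where I expect the main obstacle to lie: tracking the precise constants and verifying this cancellation, as well as checking that the $|z|>r_{1}$ hypothesis then makes the leading $\varepsilon^{3}$ block of $C_{00}$ uniformly invertible. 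Once this is in place, Lemma \ref{str} produces $\varepsilon^{3}C_{00}(\varepsilon)^{-1}=\tfrac{4\pi i}{c_{\Omega}z}P_{0}+P_{0}O(\varepsilon)P_{0}+P_{0}O(\varepsilon^{2})P_{0}$ uniformly for $r_{1}<|z|<r_{0}$, and the LDU expression for $\mathbb{M}(\varepsilon)^{-1}$ gives \eqref{exp2}.
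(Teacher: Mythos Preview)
Your proposal is correct and follows essentially the same route as the paper: block decomposition via Theorem~\ref{Lemma_Schur}, Schur complement/LDU inversion formula, identification of the leading coefficient of $C_{00}(\varepsilon)$ through Lemmas~\ref{Lemma_Schur_coeff} and~\ref{k3}, and Neumann-series inversion via Lemma~\ref{str}. The telescoping computation you flag as the ``main obstacle'' in case~2 is exactly the step the paper carries out explicitly, arriving at the same $-\tfrac{ic_{\Omega}z}{4\pi}P_{0}$ and then using $|z|>r_{1}$ to bound the inverse uniformly.
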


\begin{proof}
By Lemma \ref{Lemma_Schur}, (\ref{to}) represents as a block operator matrix
${\mathbb{M}}(\varepsilon)$ acting on the decomposition $H^{1/2}%
(\Gamma)=\mathrm{ran}(P_{0})\,\widehat{\oplus}\,\mathrm{ran}(Q_{0})$,
$Q_{0}:=1-P_{0}$. By the formula for the inversion of block operator matrices,
one has
\begin{equation}
\left(  {\mathbb{M}}(\varepsilon)\right)  ^{-1}\equiv%
\begin{bmatrix}
\left(  C_{00}(\varepsilon)\right)  ^{-1} & -\left(  C_{00}(\varepsilon
)\right)  ^{-1}M_{01}(\varepsilon)\left(  M_{11}(\varepsilon)\right)  ^{-1}\\
-\left(  M_{11}(\varepsilon)\right)  ^{-1}M_{10}(\varepsilon)\left(
C_{00}(\varepsilon)\right)  ^{-1} & \left(  M_{11}(\varepsilon)\right)
^{-1}+\left(  M_{11}(\varepsilon)\right)  ^{-1}M_{10}(\varepsilon)\left(
C_{00}(\varepsilon)\right)  ^{-1}M_{01}(\varepsilon)M_{11}(\varepsilon)
\end{bmatrix}
\label{bbM}%
\end{equation}
where, setting%
\begin{equation}
E_{\omega}^{1}:=-i\,\omega^{3}\,\frac{|\Omega|}{4\pi}\,, \label{E_1}%
\end{equation}
by \eqref{Schur_complement} and Lemma \ref{k3} the expansion
\begin{equation}
C_{00}(\varepsilon)=P_{0}\left(  E_{\omega}^{0}\varepsilon^{2}+\left(
E_{\omega}^{1}+\omega^{2}\left(  z-\omega\right)  K_{(2)}S_{\left(  1\right)
}S_{0}^{-1}\right)  \varepsilon^{3}+{O}(\varepsilon^{4})\right)  P_{0}\,,
\label{swing_eq}%
\end{equation}
holds. In particular, for each $\omega\in\mathbb{C}$, the remainder
${O}(\varepsilon^{4})$ has a uniform bound $\sim\varepsilon^{4}$ w.r.t.
$z:\left\vert z\right\vert <r_{0}$, provided that $\varepsilon_{0}$ is small
enough depending on $\omega$ and $r_{0}$ (see Lemma \ref{Lemma_Schur}).
\par\noindent
1) If $\omega\neq\omega_{M}$, by Lemma \ref{Lemma_Schur_coeff} $E_{\omega}%
^{0}\neq0$; then the Schur complement writes as%
\[
C_{00}(\varepsilon)=E_{\omega}^{0}\varepsilon^{2}P_{0}\left(  1+\left(
E_{\omega}^{1}+\omega^{2}\left(  z-\omega\right)  K_{(2)}S_{\left(  1\right)
}S_{0}^{-1}\right)  \frac{\varepsilon}{E_{\omega}^{0}}+{O}(\varepsilon
^{2})\right)  P_{0}\,.
\]
Since ${O}(\varepsilon^{2})\lesssim\varepsilon^{2}$ uniformly w.r.t. $z$ s.t.
$\left\vert z\right\vert <r_{0}$, Lemma \ref{str} applies to the r.h.s.
whenever both $\varepsilon\left\vert \omega\right\vert $ and $\varepsilon|z|$
are sufficiently small. Hence, for each $\omega$ and $r_{0}$, there exists
$\varepsilon_{0}>0$ small enough that the expansion%
\begin{equation}
\left(  C_{00}(\varepsilon)\right)  ^{-1}=\frac{1}{\varepsilon^{2}}\,\frac
{1}{E_{\omega}^{0}}\,P_{0}\left(  1-\left(  E_{\omega}^{1}+\omega^{2}\left(
z-\omega\right)  K_{(2)}S_{\left(  1\right)  }S_{0}^{-1}\right)
\frac{\varepsilon}{E_{\omega}^{0}}+{O}(\varepsilon^{2})\right)  P_{0}\,,
\label{C00-1}%
\end{equation}
holds uniformly whenever $\left\vert z\right\vert <r_{0}$. From \eqref{bbM},
\eqref{B,C}, \eqref{D_inv} and \eqref{C00-1} there follows%
\[
\varepsilon^{2}\left(  {\mathbb{M}}(\varepsilon)\right)  ^{-1}=%
\begin{bmatrix}
({E_{\omega}^{0}})^{-1}{P_{0}}+P_{0}O(\varepsilon)P_{0} & P_{0}O(\varepsilon
^{2})Q_{0}\\
Q_{0}O(\varepsilon^{2})P_{0} & Q_{0}O(\varepsilon^{2})Q_{0}%
\end{bmatrix}
\,.
\]
2) Let us now assume that $\omega=\omega_{M}$, so that: $E_{\omega_{M}}^{0}%
=0$. Since, by \eqref{Sn} and \eqref{cap}, $S_{\left(  1\right)  }S_{0}%
^{-1}\left(  1\right)  =i\,{c_{\Omega}}/{4\pi}$, i.e.,
\[
S_{\left(  1\right)  }S_{0}^{-1}P_{0}=i\,\frac{c_{\Omega}}{4\pi}\,P_{0}\,,
\]
one has
\[
\left(  z-\omega\right)  \omega^{2}P_{0}K_{(2)}S_{\left(  1\right)  }%
S_{0}^{-1}P_{0}=i\,\frac{c_{\Omega}}{4\pi}\,\left(  z-\omega\right)
\omega^{2}P_{0}K_{(2)}P_{0}=-i\,\frac{c_{\Omega}}{4\pi}\,\left(
z-\omega\right)  P_{0}\,.
\]
Moreover, from (\ref{E_1}) and $\omega_{M}^{2}:=c_{\Omega}/\left\vert
\Omega\right\vert $ there follows: $E_{\omega_{M}}^{1}=-i\,\omega_{M}%
\,\frac{c_{\Omega}}{4\pi}$. Then, \eqref{swing_eq} recasts as%
\begin{equation}
C_{00}(\varepsilon)=\left(  -i\,\omega_{M}\,\frac{c_{\Omega}}{4\pi}%
-i\,\frac{c_{\Omega}}{4\pi}\,\left(  z-\omega_{M}\right)  \right)
\varepsilon^{3}P_{0}+P_{0}{O}(\varepsilon^{4})P_{0}=\left(  -i\,\frac
{c_{\Omega}}{4\pi}\,z\,P_{0}+P_{0}{O}(\varepsilon)P_{0}\right)  \varepsilon
^{3}\,. \label{swing_eq_res}%
\end{equation}
and, for $z\not =0$, we obtain%
\[
C_{00}(\varepsilon)=-i\,\frac{c_{\Omega}}{4\pi}\,z\,P_{0}\left(  1+P_{0}%
\frac{1}{z}{O}\left(  \varepsilon\right)  P_{0}\right)  \varepsilon^{3}\,.
\]
Let us recall that ${O}\left(  \varepsilon\right)  $ has a uniform bound:
$\sup_{\left\vert z\right\vert <r_{0}}\left\Vert {O}\left(  \varepsilon
\right)  \right\Vert <C\varepsilon$, provided that $\varepsilon_{0}$ is small
enough and $0<\varepsilon<\varepsilon_{0}$. Choosing any $r_{1}<r_{0}$ such
that: $C\varepsilon_{0}/r_{1}<1$, the Neumann series%
\[%
%TCIMACRO{\tsum _{j=0}^{+\infty}}%
%BeginExpansion
{\textstyle\sum_{j=0}^{+\infty}}
%EndExpansion
\left(  -1\right)  ^{j}\left(  P_{0}\frac{1}{z}{O}\left(  \varepsilon\right)
P_{0}\right)  ^{j}=\left(  1+P_{0}\frac{1}{z}{O}\left(  \varepsilon\right)
P_{0}\right)  ^{-1}\,,
\]
converges in {$\mathcal{L}$}$\left(  H^{1/2}\left(  \Gamma\right)  \right)  $
and the expansion%
\begin{equation}
C_{00}(\varepsilon)^{-1}=\left(  \frac{4\pi}{c_{\Omega}}\,\frac{i}{z}%
\,P_{0}+P_{0}{O}\left(  \varepsilon\right)  P_{0}\right)  \frac{1}%
{\varepsilon^{3}}\,, \label{Schur_complement_inv_res}%
\end{equation}
holds uniformly w.r.t. $z:r_{1}<\left\vert z\right\vert <r_{0}$. By
\eqref{bbM}, \eqref{B,C}, \eqref{D_inv} and (\ref{Schur_complement_inv_res}),
one gets
\[
\varepsilon^{3}{\mathbb{M}}(\varepsilon)^{-1}=%
\begin{bmatrix}
\frac{4\pi}{c_{\Omega}}\,\frac{i}{z}\,P_{0}+P_{0}O(\varepsilon)P_{0} &
P_{0}O(\varepsilon^{2})Q_{0}\\
Q_{0}O(\varepsilon^{2})P_{0} & Q_{0}O(\varepsilon^{2})Q_{0}%
\end{bmatrix}
\,.
\]
\hfill
\end{proof}

Let us define%
\begin{equation}
\Lambda_{z}^{\omega}(\varepsilon):=\varepsilon(1-\varepsilon^{2}%
)(\varepsilon^{2}+(1-\varepsilon^{2})D\!N_{\varepsilon\omega}S_{\varepsilon
z})^{-1}D\!N_{\varepsilon\omega}\,. \label{Lambda_def}%
\end{equation}
By Theorem \ref{Proposition_Schur} this operator has the following asymptotic representation.

\begin{theorem}
\label{Theorem_Schur}Let $\omega\in\mathbb{C}$, $r_{0}>r_{1}>0$, $P_{0}$ be
the rank-one projector defined in \eqref{P0} and $E_{\omega}^{0}$ be given by
\eqref{E_0}. There exists $\varepsilon_{0}>0$ such that, whenever
$0<\varepsilon<\varepsilon_{0}$, the following holds true:\medskip\newline1)
If $\omega\not =\omega_{M}$, then: $z\mapsto\Lambda_{z}^{\omega}(\varepsilon)$
is a $\mathcal{L}(H^{1/2}(\Gamma),H^{-1/2}(\Gamma))$-valued analytic map in
the ball $\left\{  z:\left\vert z\right\vert <r_{0}\right\}  $ where it has
the uniform-in-$z$ expansion%
\begin{equation}
\left.  \Lambda_{z}^{\omega}(\varepsilon)=\frac{1}{\varepsilon}%
\,S_{\varepsilon\omega}^{-1}\left(  \frac{P_{0}}{E_{\omega}^{0}}+P_{0}%
{O}(\varepsilon)P_{0}+{O}(\varepsilon^{2})\right)  S_{\varepsilon\omega
}D\!N_{\varepsilon\omega}\,.\right.  \label{vepsexp1}%
\end{equation}
2) If $\omega=\omega_{M}$, then: $z\mapsto\Lambda_{z}^{\omega}(\varepsilon)$
is a $\mathcal{L}(H^{1/2}(\Gamma),H^{-1/2}(\Gamma))$-valued analytic map in
$\left\{  z:r_{1}<\left\vert z\right\vert <r_{0}\right\}  \backslash\left\{
0\right\}  $ where it has the uniform-in-$z$ expansion%
\begin{equation}
\Lambda_{z}^{\omega}(\varepsilon)=\frac{1}{\varepsilon^{2}}\,S_{\varepsilon
\omega}^{-1}\left(  \frac{4\pi}{c_{\Omega}}\,\frac{i}{z}\,P_{0}+P_{0}%
{O}(\varepsilon)P_{0}+{O}(\varepsilon^{2})\right)  S_{\varepsilon\omega
}D\!N_{\varepsilon\omega}\,. \label{vepsexp2}%
\end{equation}

\end{theorem}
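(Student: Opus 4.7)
The plan is to reduce the statement to Theorem~\ref{Proposition_Schur} by conjugating the operator whose inverse is buried in $\Lambda^{\omega}_z(\varepsilon)$ into the one already estimated there. The bridge is the standard Plemelj-type identity
\begin{equation}
\tfrac{1}{2}+K_{\varepsilon\omega}=S_{\varepsilon\omega}\,D\!N_{\varepsilon\omega}\qquad\text{on }H^{1/2}(\Gamma)\,,
\label{plan_plemelj}
\end{equation}
which I would derive from Green's representation $u=S\!L_{\varepsilon\omega}\gamma_{1}^{\mathrm{in}}u-D\!L_{\varepsilon\omega}\gamma_{0}^{\mathrm{in}}u$ applied to a solution of the interior Helmholtz equation, using the jump relation $\gamma_{0}^{\mathrm{in}}D\!L_{\varepsilon\omega}=-\tfrac{1}{2}+K_{\varepsilon\omega}$ and the definition of $D\!N_{\varepsilon\omega}$. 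Multiplying \eqref{plan_plemelj} on the right by $S_{\varepsilon z}S_{\varepsilon\omega}^{-1}$ and adding $\varepsilon^{2}$ immediately gives the conjugation identity
\begin{equation}
\varepsilon^{2}+(1-\varepsilon^{2})\bigl(\tfrac{1}{2}+K_{\varepsilon\omega}\bigr)S_{\varepsilon z}S_{\varepsilon\omega}^{-1}
=S_{\varepsilon\omega}\bigl(\varepsilon^{2}+(1-\varepsilon^{2})D\!N_{\varepsilon\omega}S_{\varepsilon z}\bigr)S_{\varepsilon\omega}^{-1}\,.
\label{plan_conj}
\end{equation}

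Inverting \eqref{plan_conj} and inserting into the definition \eqref{Lambda_def} then yields the representation
\begin{equation}
\Lambda_{z}^{\omega}(\varepsilon)=\varepsilon(1-\varepsilon^{2})\,S_{\varepsilon\omega}^{-1}\bigl(\varepsilon^{2}+(1-\varepsilon^{2})(\tfrac{1}{2}+K_{\varepsilon\omega})S_{\varepsilon z}S_{\varepsilon\omega}^{-1}\bigr)^{-1}S_{\varepsilon\omega}\,D\!N_{\varepsilon\omega}\,.
\label{plan_Lambda}
\end{equation}
At this point I would directly substitute the expansions \eqref{exp1} and \eqref{exp2} from Theorem~\ref{Proposition_Schur}. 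In case~1, the theorem frees a factor $\varepsilon^{-2}$ from the inverse in \eqref{plan_Lambda}, which combines with the leading prefactor $\varepsilon(1-\varepsilon^{2})$ to produce the overall $1/\varepsilon$ claimed in \eqref{vepsexp1}; in case~2, the freed factor is $\varepsilon^{-3}$, yielding the $1/\varepsilon^{2}$ of \eqref{vepsexp2}. The spurious contribution generated by the $-\varepsilon^{2}$ piece of $(1-\varepsilon^{2})$ produces an $\varepsilon^{2}\cdot\frac{P_{0}}{E_{\omega}^{0}}$ (respectively $\varepsilon^{2}\cdot\frac{4\pi}{c_{\Omega}}\frac{i}{z}P_{0}$) correction inside the bracket, which is absorbed without damage into the $O(\varepsilon^{2})$ remainder prescribed in \eqref{vepsexp1}-\eqref{vepsexp2}.

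The analyticity in $z$ is essentially free: $z\mapsto S_{\varepsilon z}$ is an entire $\mathcal{L}(H^{-1/2}(\Gamma),H^{1/2}(\Gamma))$-valued map, while $D\!N_{\varepsilon\omega}$, $S_{\varepsilon\omega}^{\pm1}$ and $K_{\varepsilon\omega}$ do not depend on $z$. Hence the right-hand side of \eqref{plan_Lambda} is analytic in $z$ on every open set where the bracketed operator is invertible in $\mathcal{L}(H^{1/2}(\Gamma))$, and Theorem~\ref{Proposition_Schur} guarantees this invertibility on $\{|z|<r_{0}\}$ when $\omega\neq\omega_{M}$ and on the annulus $\{r_{1}<|z|<r_{0}\}$ when $\omega=\omega_{M}$, matching the two stated domains.

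The main obstacle I expect is purely bookkeeping: making sure that the conjugation by $S_{\varepsilon\omega}^{\pm1}$ does not dilute the structural remainder $P_{0}O(\varepsilon)P_{0}+O(\varepsilon^{2})$ appearing in Theorem~\ref{Proposition_Schur}, and that the cross-corrections produced by the $(1-\varepsilon^{2})$ factor actually fit within it. Since $S_{\varepsilon\omega}$ and its inverse are uniformly bounded in $\varepsilon$ on the relevant Sobolev scales (as recalled in the appendix) and $P_{0}$ is a fixed rank-one projector independent of $\varepsilon$, the block structure survives intact, provided one carefully reads off the projector-structure \emph{before} folding $S_{\varepsilon\omega}^{\pm1}$ into the outer conjugation---which is precisely the form in which \eqref{vepsexp1}-\eqref{vepsexp2} are stated.
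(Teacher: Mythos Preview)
Your proposal is correct and follows essentially the same route as the paper: the paper uses the identity $D\!N_{\varepsilon\omega}=S_{\varepsilon\omega}^{-1}(\tfrac{1}{2}+K_{\varepsilon\omega})$ (i.e., \eqref{DN_k_id}, equivalent to your \eqref{plan_plemelj}) to obtain precisely your conjugation formula \eqref{plan_Lambda}, then feeds in Theorem~\ref{Proposition_Schur} and deduces analyticity from that of $z\mapsto S_{\varepsilon z}$ together with the analyticity of the inverse. Your bookkeeping remark about the $(1-\varepsilon^{2})$ factor and the survival of the $P_{0}O(\varepsilon)P_{0}+O(\varepsilon^{2})$ structure under the outer $S_{\varepsilon\omega}^{\pm1}$ conjugation is exactly the point, and the paper handles it the same way (implicitly, by leaving $S_{\varepsilon\omega}^{\pm1}$ outside the bracket).
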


\begin{proof}
According to \eqref{DN_k_id},
\[
D\!N_{\varepsilon\omega}=S_{\varepsilon\omega}^{-1}\left(  \frac{1}%
{2}+K_{\varepsilon\omega}\right)  \,.
\]
Hence,
\begin{equation}
\varepsilon^{2}+\left(  1-\varepsilon^{2}\right)  D\!N_{\varepsilon\omega
}S_{\varepsilon z}=S_{\varepsilon\omega}^{-1}\left(  \varepsilon^{2}+\left(
1-\varepsilon^{2}\right)  \left(  \frac{1}{2}+K_{\varepsilon\omega}\right)
S_{\varepsilon z}S_{\varepsilon\omega}^{-1}\right)  S_{\varepsilon\omega}
\label{dir_op}%
\end{equation}
and, by (\ref{Lambda_def}),
\begin{equation}
\Lambda_{z}^{\omega}(\varepsilon)=\varepsilon(1-\varepsilon^{2})S_{\varepsilon
\omega}^{-1}\left(  \varepsilon^{2}+\left(  1-\varepsilon^{2}\right)  \left(
\frac{1}{2}+K_{\varepsilon\omega}\right)  S_{\varepsilon z}S_{\varepsilon
\omega}^{-1}\right)  ^{\!-1}S_{\varepsilon\omega}D\!N_{\varepsilon\omega}\,.
\label{Lambda_id}%
\end{equation}
By Theorem \ref{Proposition_Schur} and the mapping properties of
$S_{\varepsilon\omega}D\!N_{\varepsilon\omega}$ and $S_{\varepsilon\omega
}^{-1}$ (see the Appendix), \eqref{Lambda_id} defines an operator in
${\mathcal{L}}(H^{1/2}(\Gamma),H^{-1/2}(\Gamma))$ for any $z:\left\vert
z\right\vert <r_{0}$ and $\omega\neq\omega_{M}$ or for $z:r_{1}<\left\vert
z\right\vert <r_{0}$ and $\omega=\omega_{M}$, provided that $\varepsilon_{0}$
is small enough depending on $\omega$, $r_{0}$ and $r_{1}$. From the
analyticity of $z\mapsto S_{z}$ (see Lemma \ref{anS}) follows the analyticity
of the operator (\ref{to}); the analyticity of $z\mapsto\Lambda_{z}^{\omega
}(\varepsilon)$ is then consequence of the existence the inverse, shown in
Theorem \ref{Proposition_Schur}, and of the analyticity of the inverse (see
\cite[Theorem 5.1]{Tay}).

The $\varepsilon$-expansions \eqref{vepsexp1} and \eqref{vepsexp2} follow from
the ones provided in Theorem \ref{Proposition_Schur}.
\end{proof}

\section{\label{Sec_H_dil}The operator model for acoustic interface
conditions}

Here we introduce the Schr\"{o}dinger-type operators modeling acoustic
interface conditions. Their construction involves the boundary operators whose
existence and mapping properties have been discussed in the small-scale limit
in Section \ref{eex}.\hfill

\subsection{The dilated operator}

In this subsection we provide, together with its resolvent, a self-adjoint
realization of the Laplacian with boundary conditions at the interface
$\Gamma$ separating $\Omega_{\mathrm{in}}=\Omega$ from $\Omega_{ex}%
=\mathbb{R}^{3}\backslash\overline{\Omega}$ given by%
\begin{equation}
\lbrack\gamma_{0}]u=0\,,\qquad\lbrack\gamma_{1}]u=(\varepsilon^{-2}%
-1)D\!N_{\varepsilon\omega}\gamma_{0}u\,. \label{BC_Omega}%
\end{equation}
The vector space $H_{\Delta}^{0}(\mathbb{R}^{3}\backslash\Gamma)$ appearing in
the next theorem is defined as the set of the functions $u\in L^{2}%
(\mathbb{R}^{3})$ such that the distributional Laplacian $\Delta
_{\mathbb{R}^{3}\backslash\Gamma}\,u$ is in $L^{2}(\mathbb{R}^{3})$ (see
(\ref{Laplacian_weak}) and (\ref{hzd}) for more details).

\begin{theorem}
\label{heo}Let $\omega>0$ and $H_{\varepsilon,\omega}$ be the restriction of
\begin{equation}
\Delta:H_{\Delta}^{0}(\mathbb{R}^{3}\backslash\Gamma)\subset L^{2}%
(\mathbb{R}^{3})\rightarrow L^{2}(\mathbb{R}^{3}) \label{maxLap}%
\end{equation}
to the domain
\begin{equation}
\mathrm{dom}(H_{\varepsilon,\omega})=\left\{  u\in H_{\Delta}^{0}%
(\mathbb{R}^{3}\backslash\Gamma)\cap H^{1}(\mathbb{R}^{3}):[\gamma
_{1}]u=(\varepsilon^{-2}-1)D\!N_{\varepsilon\omega}\gamma_{0}u\right\}  \,.
\label{dominio}%
\end{equation}
There exists $\varepsilon_{0}>0$ sufficiently small that, for all
$0<\varepsilon<\varepsilon_{0}$, $H_{\varepsilon,\omega}$ is a self-adjoint
and semi-bounded operator in $L^{2}(\mathbb{R}^{3})$. Moreover, for any
$z\in\mathbb{C}_{+}$ such that $z^{2}\in\varrho(-H_{\varepsilon,\omega}%
)\cap(\mathbb{C}\backslash\lbrack0,+\infty))$, hence at least for any
$z\in\mathbb{C}_{+}\backslash i\mathbb{R}_{+}$, its resolvent is given by
\begin{equation}
R_{z}^{\varepsilon,\omega}:=(-H_{\varepsilon,\omega}-z^{2})^{-1}%
=R_{z}-S\!L_{z}\left(  (\varepsilon^{-2}-1)^{-1}+D\!N_{\varepsilon\omega}%
S_{z}\right)  ^{-1}D\!N_{\varepsilon\omega}\,\gamma_{0}R_{z}\,,
\label{resolvent}%
\end{equation}
where $R_{z}$ denotes the free resolvent, i.e., $R_{z}=(-\Delta-z^{2})^{-1}$.
\end{theorem}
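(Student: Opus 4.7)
The plan is to recognize the problem as an instance of the $\delta$-type singular perturbation scheme of \cite{MaPoSi SE, MaPo SM}, with the boundary strength operator
\[
\Theta_{\varepsilon,\omega}:=(\varepsilon^{-2}-1)\,D\!N_{\varepsilon\omega}\in\mathcal{L}(H^{1/2}(\Gamma),H^{-1/2}(\Gamma)).
\]
The first step is to verify that $\Theta_{\varepsilon,\omega}$ is well-defined and self-adjoint (in the duality sense). For this I fix $\varepsilon_0>0$ small enough that $\varepsilon^{2}\omega^{2}\notin\sigma(-\Delta_{\Omega}^{D})$ for all $0<\varepsilon<\varepsilon_{0}$; then $D\!N_{\varepsilon\omega}$ is a well-defined, densely defined symmetric map $H^{1/2}(\Gamma)\to H^{-1/2}(\Gamma)$, being the Dirichlet-to-Neumann operator of a symmetric interior problem, and the scalar factor $\varepsilon^{-2}-1$ is real. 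This places $H_{\varepsilon,\omega}$ within the class of self-adjoint realizations of $-\Delta+\Theta_{\varepsilon,\omega}\delta_{\Gamma}$ provided the intermediate boundary operator is invertible at some spectral point.

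Next I would establish the existence of
\[
B_z^{\varepsilon,\omega}:=(\varepsilon^{-2}-1)^{-1}+D\!N_{\varepsilon\omega}S_{z}
=\tfrac{1}{1-\varepsilon^{2}}\bigl(\varepsilon^{2}+(1-\varepsilon^{2})D\!N_{\varepsilon\omega}S_{z}\bigr)
\]
as a bounded isomorphism $H^{-1/2}(\Gamma)\to H^{1/2}(\Gamma)$, for some (hence a set of) $z\in\mathbb{C}_{+}\setminus i\mathbb{R}_{+}$. Via the identity \eqref{dir_op} this reduces to the invertibility of $\varepsilon^{2}+(1-\varepsilon^{2})(1/2+K_{\varepsilon\omega})S_{\varepsilon z}S_{\varepsilon\omega}^{-1}$ on $H^{1/2}(\Gamma)$, which is exactly the content of Theorem~\ref{Proposition_Schur} once one rescales $z\mapsto z$ instead of $\varepsilon z$: for $\varepsilon$ small and $z$ away from $i\mathbb{R}_+$ (so that $\operatorname{Im}(z^2)\neq0$), the operator is invertible by the Schur decomposition already carried out. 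This step supplies the bounded inverse appearing in \eqref{resolvent}.

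With those ingredients the resolvent formula is verified by a direct computation on the ansatz
\[
u=R_{z}f-S\!L_{z}\varphi,\qquad \varphi:=B_{z}^{\varepsilon,\omega}{}^{-1}D\!N_{\varepsilon\omega}\,\gamma_{0}R_{z}f.
\]
Using the standard jump relations $[\gamma_{0}]S\!L_{z}\varphi=0$ and $[\gamma_{1}]S\!L_{z}\varphi=-\varphi$, the inclusion $u\in H^{1}(\mathbb{R}^{3})\cap H^{0}_{\Delta}(\mathbb{R}^{3}\setminus\Gamma)$ follows, and $(-\Delta-z^{2})u=f$ on $\mathbb{R}^{3}\setminus\Gamma$. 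The interface condition amounts to
\[
\varphi=(\varepsilon^{-2}-1)D\!N_{\varepsilon\omega}(\gamma_{0}R_{z}f-S_{z}\varphi),
\]
which rearranges precisely to the defining equation $B_{z}^{\varepsilon,\omega}\varphi=D\!N_{\varepsilon\omega}\gamma_{0}R_{z}f$. Conversely, that $u=0$ forces $f=0$ (taking $\gamma_0$, $\gamma_1$ traces and using invertibility of $B_z^{\varepsilon,\omega}$) shows $R_{z}^{\varepsilon,\omega}$ is injective, so it is the resolvent of a densely defined closed operator whose domain and action coincide with \eqref{dominio}–\eqref{maxLap}.

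Finally, self-adjointness and lower semi-boundedness follow from the general abstract framework: $\Theta_{\varepsilon,\omega}$ is a self-adjoint boundary map in $\mathcal{L}(H^{1/2}(\Gamma),H^{-1/2}(\Gamma))$, hence the construction of \cite{MaPoSi SE, MaPo SM} yields a self-adjoint extension of $\Delta|\ker\gamma_{0}$, whose resolvent is necessarily of Kre\u{\i}n type, and uniqueness of the formula already forces it to coincide with \eqref{resolvent}. Semi-boundedness is inherited from the semi-boundedness of $-\Delta$ once one controls the sign of the perturbation for small $\varepsilon$: writing $D\!N_{\varepsilon\omega}=D\!N_{0}+O(\varepsilon^{2})$ in $\mathcal{L}(H^{1/2},H^{-1/2})$ and using the positivity of $D\!N_{0}$ modulo a finite-dimensional correction, the associated quadratic form is lower semi-bounded uniformly in $\varepsilon<\varepsilon_{0}$. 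The main obstacle is the invertibility step (the Schur-complement analysis of Theorem~\ref{Proposition_Schur}); once that is granted, the remainder of the argument is the standard jump-relation verification of a Kre\u{\i}n resolvent identity.
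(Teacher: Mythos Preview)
Your approach is essentially the paper's: both place the problem in the $\delta$-perturbation framework of \cite{MaPoSi SE,MaPo SM}, reduce the key invertibility of $(\varepsilon^{-2}-1)^{-1}+D\!N_{\varepsilon\omega}S_{z}$ to the Schur analysis behind Theorem~\ref{Proposition_Schur}, and verify the domain via the jump relations \eqref{jump}. Two differences in execution are worth noting. First, rather than your direct ansatz check, the paper verifies the two abstract identities required by \cite[Theorem 2.4]{MaPo SM} (namely $(-M_{-\bar z}^{\varepsilon,\omega})^{-1}D\!N_{\varepsilon\omega}=\big((-M_{z}^{\varepsilon,\omega})^{-1}D\!N_{\varepsilon\omega}\big)^{*}$ and a resolvent-difference relation), which immediately yields self-adjointness of the operator defined by the Kre\u{\i}n formula; it then shows this operator coincides with $H_{\varepsilon,\omega}$ by the same jump computation you outline. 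Second, for semi-boundedness the paper does not use positivity of $D\!N_{0}$ but instead bounds $\langle D\!N_{\varepsilon\omega}\gamma_{0}u,\gamma_{0}u\rangle_{H^{-s},H^{s}}$ for some $s\in(0,1/2)$ and invokes the interpolation trace inequality $\|u\|_{H^{s+1/2}}^{2}\le a\|\nabla u\|^{2}+b\|u\|^{2}$ with $a$ arbitrarily small; this avoids the difficulty in your sketch that the $O(1)$ remainder sits in $\mathcal{L}(H^{1/2},H^{-1/2})$, where the trace constant is not small (your argument can be repaired by noting that $D\!N_{\varepsilon\omega}-D\!N_{0}$ is actually smoothing, but the paper's route is cleaner). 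Minor slip: $B_{z}^{\varepsilon,\omega}$ is an isomorphism of $H^{-1/2}(\Gamma)$, not $H^{-1/2}\to H^{1/2}$.
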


\begin{proof}
Here, for the sake of brevity, we set
\begin{equation}
M_{z}^{\varepsilon,\omega}:=(\varepsilon^{-2}-1)^{-1}+D\!N_{\varepsilon\omega
}S_{z}\,. \label{defM}%
\end{equation}
By \eqref{DN_k_id} and Lemma \ref{anS},
\begin{align*}
&  M_{z}^{\varepsilon,\omega}=(\varepsilon^{-2}-1)^{-1}\left(  1+(\varepsilon
^{-2}-1)D\!N_{\varepsilon\omega}S_{z}\right) \\
=  &  \varepsilon^{-2}(\varepsilon^{-2}-1)^{-1}\left(  \varepsilon
^{2}+(1-\varepsilon^{2})D\!N_{\varepsilon\omega}S_{z}\right) \\
=  &  \varepsilon^{-2}(\varepsilon^{-2}-1)^{-1}\left(  \varepsilon
^{2}+(1-\varepsilon^{2})S_{\varepsilon\omega}^{-1}\left(  \frac{1}%
{2}+K_{\varepsilon\omega}\right)  S_{z}\right) \\
=  &  \varepsilon^{-2}(\varepsilon^{-2}-1)^{-1}S_{\varepsilon\omega}%
^{-1}\left(  \varepsilon^{2}+(1-\varepsilon^{2})\left(  \frac{1}%
{2}+K_{\varepsilon z}\right)  S_{z}S_{\varepsilon\omega}^{-1}\right)
S_{\varepsilon\omega}\,.
\end{align*}
Let us fix $z\in\mathbb{C}_{+}$; due to Theorem \ref{Proposition_Schur} and to
the mapping properties of $S_{z}$, there exists $\varepsilon_{0}>0$ such that,
whenever $0<\varepsilon<\varepsilon_{0}$, $M_{z}^{\varepsilon,\omega}$ has a
bounded inverse $(M_{z}^{\varepsilon,\omega})^{-1}\in\mathcal{L}%
(H^{-1/2}(\Gamma))$. By $S_{z}^{\ast}=S_{-\bar{z}}$ and $D\!N_{\varepsilon
\omega}^{\ast}=D\!N_{\varepsilon\omega}$,
\[
D\!N_{\varepsilon\omega}(M_{z}^{\varepsilon,\omega})^{\ast}=D\!N_{\varepsilon
\omega}\left(  (\varepsilon^{-2}-1)^{-1}+S_{-\bar{z}}D\!N_{\varepsilon\omega
})\right)  =M_{-\bar{z}}^{\varepsilon,\omega}D\!N_{\varepsilon\omega}\,,
\]
and so
\begin{equation}
(-M_{-\bar{z}}^{\varepsilon,\omega})^{-1}D\!N_{\varepsilon\omega
}=D\!N_{\varepsilon\omega}((-M_{z}^{\varepsilon,\omega})^{\ast})^{-1}%
=D\!N_{\varepsilon\omega}((-M_{z}^{\varepsilon,\omega})^{-1})^{\ast}=\left(
(-M_{z}^{\varepsilon,\omega})^{-1}D\!N_{\varepsilon\omega}\right)  ^{\ast}\,.
\label{w1}%
\end{equation}
By the first resolvent identity,
\[
S\!L_{w}-S\!L_{z}=(w^{2}-z^{2})R_{w}S\!L_{z}\,,
\]
and so
\[
M_{w}^{\varepsilon,\omega}-M_{z}^{\varepsilon,\omega}=(w^{2}-z^{2}%
)D\!N_{\varepsilon\omega}\gamma_{0}R_{w}S\!L_{z}=(w^{2}-z^{2}%
)D\!N_{\varepsilon\omega}S\!L_{-\bar{w}}^{\ast}S\!L_{z}%
\]
This gives
\begin{equation}
(-M_{w}^{\varepsilon,\omega})^{-1}D\!N_{\varepsilon\omega}-(-M_{z}%
^{\varepsilon,\omega})^{-1}D\!N_{\varepsilon\omega}=(-z^{2}-(-w^{2}%
))(M_{w}^{\varepsilon,\omega})^{-1}D\!N_{\varepsilon\omega}S\!L_{-\bar{z}%
}^{\ast}S\!L_{z}(M_{w}^{\varepsilon,\omega})^{-1}D\!N_{\varepsilon\omega}\,.
\label{w2}%
\end{equation}
Let us remark that \eqref{w1} and \eqref{w2} correspond to \cite[relations
(2.6) and (2.7)]{MaPo SM} (be aware of the different notation and convention:
our $(-M_{w}^{\varepsilon,\omega})^{-1}D\!N_{\varepsilon\omega}$ corresponds
to $\Lambda_{z}$ in \cite{MaPo SM}, while our $R_{z}:=(-\Delta-z^{2})^{-1}$ is
there denoted with $R_{z}^{0})$. Hence \cite[Theorem 2.4]{MaPo SM} applies and
we conclude that, for the fixed $z$
\begin{align}
&  \widetilde{R}_{z}^{\varepsilon,\omega}:=R_{z}+S\!L_{z}(-M_{z}%
^{\varepsilon,\omega})^{-1}D\!N_{\varepsilon\omega}\gamma_{0}R_{z}\nonumber\\
=  &  R_{z}-S\!L_{z}((\varepsilon^{-2}-1)^{-1}+D\!N_{\varepsilon\omega}%
S_{z})^{-1}D\!N_{\varepsilon\omega}\gamma_{0}R_{z} \label{sarf}%
\end{align}
is the resolvent of a self-adjoint operator $\widetilde{H}_{\varepsilon
,\omega}$ in $L^{2}(\mathbb{R}^{3})$ which extends $\Delta|\mathrm{ker}%
(\gamma_{0})$. By \cite[Theorem 2.19]{CFP}, such a resolvent formula extends
to all $z\in\mathbb{C}_{+}$ such that $z^{2}\in\varrho(-\widetilde{H}%
_{\varepsilon,\omega})\cap(\mathbb{C}\backslash\lbrack0,+\infty))$; in
particular, by the self-adjointness of $\widetilde{H}_{\varepsilon,\omega}$,
(\ref{sarf}) holds at least for any $z\in\mathbb{C}_{+}\backslash
i\mathbb{R}_{+}$.

Let us now show that $\widetilde{H}_{\varepsilon,\omega}=H_{\varepsilon
,\omega}$. By the mapping properties of $S\!L_{z}$ (see \eqref{slmp}),
$S\!L_{z}(M_{z}^{\varepsilon,\omega})^{-1}$ has values in $H^{1}%
(\mathbb{R}^{3}\backslash\Gamma)$ and so, by $[\gamma_{0}]S\!L_{z}=0$ (see
\eqref{jump}), one gets $\mathrm{dom}(\widetilde{H}_{\varepsilon,\omega
})\subseteq H^{1}(\mathbb{R}^{3})$. By Green's formula \eqref{hG}, taking into
account the boundary conditions in \eqref{dominio}, one readily can check that
$H_{\varepsilon,\omega}$ is a symmetric operator. Hence, since $\widetilde{H}%
_{\varepsilon,\omega}\subset(\Delta|\mathrm{ker}(\gamma_{0}))^{\ast}%
=\Delta|H_{\Delta}^{0}(\mathbb{R}^{3}\backslash\Gamma)$, it suffices to show
that
\[
\mathrm{dom}(\widetilde{H}_{\varepsilon,\omega})\equiv\{\widetilde{u}\in
H^{1}(\mathbb{R}^{3}):\widetilde{u}=u_{0}-S\!L_{z}(M_{z}^{\varepsilon,\omega
})^{-1}D\!N_{\varepsilon\omega}\gamma_{0}u_{0},\ u_{0}\in H^{2}(\mathbb{R}%
^{3})\}\subseteq\mathrm{dom}(H_{\varepsilon,\omega})\,.
\]
To this aim, let us notice at first that the identity%
\[
(-\Delta-z^{2})S\!L_{z}=(-\Delta-z^{2})\mathcal{G}_{z}\ast\gamma_{0}^{\ast
}=\delta_{0}\ast\gamma_{0}^{\ast}=\gamma_{0}^{\ast}\,,
\]
implies: $-\Delta S\!L_{z}(x)=z^{2}S\!L_{z}(x)$ for all $x\notin\Gamma$ and so
$\mathrm{dom}(\widetilde{H}_{\varepsilon,\omega})\subseteq H_{\Delta}%
^{0}(\mathbb{R}^{3}\backslash\Gamma)$.
%By $[\gamma_{0}]S\!L_{z}=0$ (see\eqref{jump}), there follows $[\gamma_{0}]\widetilde{u}=0$.
Moreover, by \eqref{jump} and the definition \eqref{defM}, one gets
\begin{align*}
&  M_{z}^{\varepsilon,\omega}[\gamma_{1}]\widetilde{u}=-M_{z}^{\varepsilon
,\omega}[\gamma_{1}]S\!L_{z}(M_{z}^{\varepsilon,\omega})^{-1}D\!N_{\varepsilon
\omega}\gamma_{0}u_{0}=D\!N_{\varepsilon\omega}\gamma_{0}u_{0}\\
=  &  \left(  (\varepsilon^{-2}-1)M_{z}^{\varepsilon,\omega}+1-(\varepsilon
^{-2}-1)M_{z}^{\varepsilon,\omega}\right)  D\!N_{\varepsilon\omega}\gamma
_{0}u_{0}\\
=  &  \left(  (\varepsilon^{-2}-1)M_{z}^{\varepsilon,\omega}-(\varepsilon
^{-2}-1)M_{z}^{\varepsilon,\omega}D\!N_{\varepsilon\omega}S_{z}(M_{z}%
^{\varepsilon,\omega})^{-1}\right)  D\!N_{\varepsilon\omega}\gamma_{0}u_{0}\\
=  &  (\varepsilon^{-2}-1)M_{z}^{\varepsilon,\omega}D\!N_{\varepsilon\omega
}\gamma_{0}\left(  u_{0}-S\!L_{z}(M_{z}^{\varepsilon,\omega})^{-1}%
D\!N_{\varepsilon\omega}\gamma_{0}u_{0}\right) \\
=  &  (\varepsilon^{-2}-1)M_{z}^{\varepsilon,\omega}D\!N_{\varepsilon\omega
}\gamma_{0}\widetilde{u}\,.
\end{align*}
Since $M_{z}^{\varepsilon,\omega}$ is a bijection, this is equivalent to
\[
\lbrack\gamma_{1}]\widetilde{u}=(\varepsilon^{-2}-1)D\!N_{\varepsilon\omega
}\gamma_{0}\widetilde{u}\,.
\]

Finally, let us show that $H_{\varepsilon,\omega}$ is semi-bounded. Again by
Green's formula \eqref{hG}, for any $u\in\mathrm{dom}(H_{\varepsilon,\omega})$
and for any $s\in(0,1/2)$, one gets
\[
\left\langle -H_{\varepsilon,\omega}u,u\right\rangle _{L^{2}\left(
\mathbb{R}^{3}\right)  }=\left\Vert \nabla u\right\Vert _{L^{2}\left(
\mathbb{R}^{3}\right)  }^{2}+\left(  \varepsilon^{-2}-1\right)  \left\langle
D\!N_{\varepsilon\omega}\gamma_{0}u,\gamma_{0}u\right\rangle _{H^{-s}%
(\Gamma),H^{s}(\Gamma)}\,.
\]
By
\begin{align*}
\left\vert \left\langle D\!N_{\varepsilon\omega}\gamma_{0}u,\gamma
_{0}u\right\rangle _{H^{-s}\left(  \Gamma\right)  ,H^{s}\left(  \Gamma\right)
}\right\vert \leq &  \left\Vert D\!N_{\varepsilon\omega}\right\Vert
_{H^{s}\left(  \Gamma\right)  ,H^{-s}\left(  \Gamma\right)  }\left\Vert
\gamma_{0}\right\Vert _{H^{s+1/2}\left(  \mathbb{R}^{3}\right)  ,H^{s}\left(
\Gamma\right)  }^{2}\left\Vert u\right\Vert _{H^{s+1/2}\left(  \mathbb{R}%
^{3}\right)  }^{2}\\
\equiv &  \,c\,\left\Vert u\right\Vert _{H^{s+1/2}\left(  \mathbb{R}%
^{3}\right)  }^{2}\,,
\end{align*}
and since for any $a>0$ there exists $b>0$ such that
\[
\left\Vert u\right\Vert _{H^{s+1/2}\left(  \mathbb{R}^{3}\right)  }^{2}\leq
a\left\Vert \nabla u\right\Vert _{L^{2}\left(  \mathbb{R}^{3}\right)  }%
^{2}+b\left\Vert u\right\Vert _{L^{2}\left(  \mathbb{R}^{3}\right)  }^{2}\,,
\]
taking $a$ sufficiently small, we obtain
\begin{align*}
\left\langle -H_{\varepsilon,\omega}u,u\right\rangle _{L^{2}\left(
\mathbb{R}^{3}\right)  }\geq &  \left(  1-a\,c\,\left\vert \varepsilon
^{-2}-1\right\vert \right)  \left\Vert \nabla u\right\Vert _{L^{2}\left(
\mathbb{R}^{3}\right)  }^{2}-b\,c\,\left\vert \varepsilon^{-2}-1\right\vert
\left\Vert u\right\Vert _{L^{2}\left(  \mathbb{R}^{3}\right)  }^{2}\\
\geq &  -b\,c\,\left\vert \varepsilon^{-2}-1\right\vert \left\Vert
u\right\Vert _{L^{2}\left(  \mathbb{R}^{3}\right)  }^{2}\,.
\end{align*}

\hfill
\end{proof}

\begin{remark}
The jump condition $[\gamma_{0}]u=0$ holds for $u\in\mathrm{dom}%
(H_{\varepsilon,\omega})$ due to $\mathrm{dom}(H_{\varepsilon,\omega
})\subseteq H^{1}(\mathbb{R}^{3})$.
\end{remark}

\begin{remark}
The Dirichlet-to-Neumann operator $D\!N_{\varepsilon\omega}$ appearing in both
the definitions of $\mathrm{dom}(H_{\varepsilon,\omega})$ and $R_{z}%
^{\varepsilon,\omega}$, is well-defined for any $\omega>0$ and a sufficiently
small $\varepsilon>0$ such that $0<(\varepsilon\omega)^{2}<\lambda_{\Omega}$,
$\lambda_{\Omega}$ denoting the smallest eigenvalue of $-\Delta_{\Omega}^{D}$
(see Subsection \ref{Sec_DN} in the Appendix).
\end{remark}

\begin{remark}
Building upon the theory of singular perturbations presented in \cite{MaPoSi
SE}, the self-adjointness of $H_{\varepsilon,\omega}$ could be proved without
the assumption $\varepsilon\ll1$. Here we prefer to exploit a less technical
construction involving asymptotic estimates for the operator $(\varepsilon
^{-2}-1)^{-1}+D\!N_{\varepsilon\omega}S_{z}$: this allows us to avoid a
slightly burdensome abstract framework, while the asymptotic estimates as
$\varepsilon\rightarrow0_{+}$ provide the natural tool of the subsequent analysis.
\end{remark}

\subsection{\label{Sec_expansions}Dilation identities}

We introduce the smooth map%
\begin{equation}
\Phi_{\varepsilon}(y)=y_{0}+\varepsilon\,\left(  y-y_{0}\right)
\,,\qquad\varepsilon>0\,,\quad y_{0}\in\mathbb{R}^{3}\,; \label{dilation_def}%
\end{equation}
the contracted domain $\Omega^{\varepsilon}$ is then defined by
\begin{equation}
\Omega^{\varepsilon}:=\Phi_{\varepsilon}(\Omega)\equiv\left\{  x\in
\mathbb{R}^{3}:x=y_{0}+\varepsilon\,\left(  y-y_{0}\right)  \,,\ y\in
\Omega\right\}  \,, \label{Omega_eps}%
\end{equation}
while its boundary $\Gamma^{\varepsilon}$ is given by
\[
\Gamma^{\varepsilon}=\Phi_{\varepsilon}(\Gamma)\equiv\left\{  x\in
\mathbb{R}^{3}:x=y_{0}+\varepsilon\,\left(  y-y_{0}\right)  \,,\ y\in
\Gamma\right\}  \,.
\]
The map $\Phi_{\varepsilon}$ and its inverse induce unitary operators on
$L^{2}(\mathbb{R}^{3})$ defined by%
\begin{equation}
\left(  U_{\varepsilon}u\right)  (x):=\varepsilon^{-3/2}\,u(\Phi_{\varepsilon
}^{-1}(x))\,,\qquad\left(  U_{\varepsilon}^{-1}u\right)  (y):=\varepsilon
^{3/2}\,u(\Phi_{\varepsilon}(y))\,. \label{Uve}%
\end{equation}
By the definition of $U_{\varepsilon}$ one gets
\[
\Delta=\varepsilon^{-2}U_{\varepsilon}\Delta U_{\varepsilon}^{-1}%
\]
and hence
\begin{equation}
R_{z}=\varepsilon^{2}U_{\varepsilon}R_{\varepsilon z}U_{\varepsilon}^{-1}\,.
\label{Res_conj}%
\end{equation}
In the next Lemmata, for any linear operator $L$ in spaces of functions on
$\Omega$ (or $\Gamma)$ we denote by $L(\varepsilon)$ the corresponding
operator in spaces of functions on $\Omega^{\varepsilon}$ (or $\Gamma
^{\varepsilon}$).

\begin{lemma}
\label{Lemma_conj0}
\begin{equation}
U_{\varepsilon}\gamma_{0}^{\mathrm{in}/\mathrm{ex}}U_{\varepsilon}^{-1}%
=\gamma_{0}^{\mathrm{in}/\mathrm{ex}}(\varepsilon)\,. \label{trace_conj}%
\end{equation}%
\begin{equation}
U_{\varepsilon}\gamma_{1}^{\mathrm{in}/\mathrm{ex}}U_{\varepsilon}%
^{-1}=\varepsilon\,\gamma_{1}^{\mathrm{in}/\mathrm{ex}}(\varepsilon)\,.
\label{trace_prime_conj}%
\end{equation}

\end{lemma}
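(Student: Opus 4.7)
The plan is to prove both identities by a direct change-of-variables calculation, treating each lateral trace separately but in parallel. The key observation is that $\Phi_\varepsilon$ is an affine bijection that maps $\Omega$ onto $\Omega^\varepsilon$ and $\Gamma$ onto $\Gamma^\varepsilon$ smoothly, with Jacobian $\varepsilon\,I$; in particular the outward unit normals are preserved pointwise, $\nu(\Phi_\varepsilon(y))=\nu(y)$ for $y\in\Gamma$. On the boundary side, $U_\varepsilon$ is understood via the same formula applied to functions on $\Gamma$, namely $(U_\varepsilon\varphi)(x):=\varepsilon^{-3/2}\varphi(\Phi_\varepsilon^{-1}(x))$ for $x\in\Gamma^\varepsilon$, which is the natural action that matches the volume definition \eqref{Uve} through pointwise restriction.

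First I would verify \eqref{trace_conj}. Starting from $u\in H^1(\mathbb{R}^3)$ (the dense and regular case is enough by continuity, see the mapping properties in the Appendix), write $v:=U_\varepsilon u$, so that $v(x)=\varepsilon^{-3/2}u(\Phi_\varepsilon^{-1}(x))$ and $\Phi_\varepsilon^{-1}(x)\in\Gamma$ whenever $x\in\Gamma^\varepsilon$. Taking the interior (resp. exterior) non-tangential limit from $\Omega^\varepsilon$ (resp. $\mathbb{R}^3\setminus\overline{\Omega^\varepsilon}$) commutes with composition by the diffeomorphism $\Phi_\varepsilon^{-1}$, which sends these one-sided neighbourhoods of $\Gamma^\varepsilon$ onto the corresponding one-sided neighbourhoods of $\Gamma$. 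Hence
\begin{equation}
(\gamma_0^{\mathrm{in}/\mathrm{ex}}(\varepsilon)\,U_\varepsilon u)(x)=\varepsilon^{-3/2}(\gamma_0^{\mathrm{in}/\mathrm{ex}}u)(\Phi_\varepsilon^{-1}(x))=(U_\varepsilon\gamma_0^{\mathrm{in}/\mathrm{ex}}u)(x)\,,
\end{equation}
which, once multiplied on the right by $U_\varepsilon^{-1}$, gives \eqref{trace_conj}.

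Next, for \eqref{trace_prime_conj}, I would apply the chain rule to $v=U_\varepsilon u$. Since $\Phi_\varepsilon^{-1}(x)=y_0+\varepsilon^{-1}(x-y_0)$, its differential is $\varepsilon^{-1}\,\mathrm{Id}$, so
\begin{equation}
\nabla_x v(x)=\varepsilon^{-3/2}\,\varepsilon^{-1}(\nabla u)(\Phi_\varepsilon^{-1}(x))\,.
\end{equation}
Using that $\nu(x)=\nu(\Phi_\varepsilon^{-1}(x))$ for $x\in\Gamma^\varepsilon$ and taking lateral non-tangential limits as above,
\begin{equation}
(\gamma_1^{\mathrm{in}/\mathrm{ex}}(\varepsilon)\,U_\varepsilon u)(x)=\varepsilon^{-5/2}(\gamma_1^{\mathrm{in}/\mathrm{ex}}u)(\Phi_\varepsilon^{-1}(x))=\varepsilon^{-1}(U_\varepsilon\gamma_1^{\mathrm{in}/\mathrm{ex}}u)(x)\,,
\end{equation}
which is \eqref{trace_prime_conj} after post-composition with $U_\varepsilon^{-1}$.

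The computation is entirely routine; the only points that require a little care are (i) justifying that the non-tangential limit commutes with the smooth change of coordinates (which follows from the uniform bi-Lipschitz character of $\Phi_\varepsilon$ for each fixed $\varepsilon>0$, so that one-sided Sobolev traces are covariant under this diffeomorphism), and (ii) extending the identities from the smooth dense subset to the full Sobolev scale on which the traces $\gamma_j^{\mathrm{in}/\mathrm{ex}}$ and $\gamma_j^{\mathrm{in}/\mathrm{ex}}(\varepsilon)$ are bounded. Both facts are standard, so there is no substantive obstacle; the content of the lemma is the explicit $\varepsilon$-scaling in \eqref{trace_prime_conj} coming from the Jacobian of $\Phi_\varepsilon^{-1}$.
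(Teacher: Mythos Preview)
Your proof is correct and is precisely the routine computation the paper has in mind: the paper's own proof consists of the single sentence ``The statement is an immediate consequence of the definitions.'' You have simply written out that immediate consequence in detail, so the approaches coincide.
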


\begin{proof}
The statement is an immediate consequence of the definitions.
\end{proof}

\begin{lemma}
\label{Lemma_conj}Let $\omega>0$ and $\varepsilon>0$ such that $(\varepsilon
\omega)^{2}\in\varrho(-\Delta^{D}_{\Omega})$. Then
\begin{equation}
U_{\varepsilon}D\!N_{\varepsilon\omega}U_{\varepsilon}^{-1}=\varepsilon
\,D\!N_{\omega}( \varepsilon) \,. \label{conj_id}%
\end{equation}

\end{lemma}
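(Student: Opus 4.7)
The plan is to verify the identity by chasing a Dirichlet problem through the dilation, using \eqref{Res_conj} and Lemma \ref{Lemma_conj0} as the only ingredients. Fix $\varphi \in H^{1/2}(\Gamma)$ and let $v \in H^1(\Omega)$ denote the unique solution of
\begin{equation*}
(\Delta + (\varepsilon\omega)^2)\,v = 0 \quad\text{in }\Omega\,,\qquad \gamma_0^{\mathrm{in}} v = \varphi\,,
\end{equation*}
which exists by the hypothesis $(\varepsilon\omega)^2 \in \varrho(-\Delta^D_\Omega)$; by the definition of the Dirichlet-to-Neumann operator one has $D\!N_{\varepsilon\omega}\varphi = \gamma_1^{\mathrm{in}} v$. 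Set $u := U_\varepsilon v$, a function supported in $\overline{\Omega^\varepsilon}$.

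Next I would transport the PDE to the contracted domain. Since $\Delta = \varepsilon^{-2} U_\varepsilon \Delta U_\varepsilon^{-1}$ (the bulk identity preceding \eqref{Res_conj}), applying $U_\varepsilon$ to the interior equation for $v$ gives
\begin{equation*}
0 = U_\varepsilon\bigl(\Delta + (\varepsilon\omega)^2\bigr)v = \varepsilon^2\bigl(\Delta + \omega^2\bigr) U_\varepsilon v = \varepsilon^2(\Delta + \omega^2) u \quad\text{in }\Omega^\varepsilon\,,
\end{equation*}
so $u$ solves the Helmholtz equation on $\Omega^\varepsilon$ at frequency $\omega$. The Dirichlet trace of $u$ on $\Gamma^\varepsilon$ is then determined by \eqref{trace_conj}: $\gamma_0^{\mathrm{in}}(\varepsilon)\,u = \gamma_0^{\mathrm{in}}(\varepsilon) U_\varepsilon v = U_\varepsilon\,\gamma_0^{\mathrm{in}} v = U_\varepsilon\varphi$ (where $U_\varepsilon$ is read, on each trace, as the corresponding boundary dilation implicit in Lemma~\ref{Lemma_conj0}). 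Hence $u$ is the function defining $D\!N_\omega(\varepsilon)(U_\varepsilon\varphi)$, and by definition
\begin{equation*}
D\!N_\omega(\varepsilon)\,U_\varepsilon\varphi = \gamma_1^{\mathrm{in}}(\varepsilon)\,u = \gamma_1^{\mathrm{in}}(\varepsilon)\,U_\varepsilon v\,.
\end{equation*}

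To finish, I would invoke the normal-trace identity \eqref{trace_prime_conj}, rewritten as $\gamma_1^{\mathrm{in}}(\varepsilon) U_\varepsilon = \varepsilon^{-1} U_\varepsilon \gamma_1^{\mathrm{in}}$, yielding
\begin{equation*}
D\!N_\omega(\varepsilon)\,U_\varepsilon\varphi = \varepsilon^{-1}\,U_\varepsilon\,\gamma_1^{\mathrm{in}} v = \varepsilon^{-1}\,U_\varepsilon\,D\!N_{\varepsilon\omega}\varphi\,.
\end{equation*}
Multiplying by $\varepsilon$ and composing with $U_\varepsilon^{-1}$ on the right gives precisely \eqref{conj_id}. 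There is no substantial obstacle: the entire proof is an algebraic rearrangement of \eqref{trace_conj}--\eqref{trace_prime_conj} together with the scaling of the Laplacian under $U_\varepsilon$; the only point requiring a little care is recognizing that the factor $\varepsilon$ on the right-hand side of \eqref{conj_id} originates exclusively from the chain-rule factor in \eqref{trace_prime_conj}, whereas the Dirichlet trace contributes no such factor.
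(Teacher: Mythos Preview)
Your proof is correct and follows essentially the same route as the paper: both arguments transport the interior Dirichlet problem through the dilation $U_\varepsilon$, invoke the bulk scaling $\Delta=\varepsilon^{-2}U_\varepsilon\Delta U_\varepsilon^{-1}$ together with the trace identities \eqref{trace_conj}--\eqref{trace_prime_conj}, and read off the factor $\varepsilon$ from the normal-trace rescaling. The only cosmetic difference is that the paper introduces an auxiliary $\tilde u=\varepsilon^{2}U_\varepsilon u$ (the extra $\varepsilon^{2}$ cancels in the end), whereas you work directly with $U_\varepsilon v$; your version is slightly cleaner.
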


\begin{proof}
By the definition of $D\!N_{\varepsilon\omega}$, it results%
\[
U_{\varepsilon}D\!N_{\varepsilon\omega}U_{\varepsilon}^{-1}\varphi
:=U_{\varepsilon}\gamma_{1}^{\mathrm{in}}u\,,\qquad%
\begin{cases}
(\Delta_{\Omega}+\varepsilon^{2}\omega^{2})u=0\,,\\
\gamma_{0}^{\mathrm{in}}u=U_{\varepsilon}^{-1}\varphi\,.
\end{cases}
\]
Setting $\tilde{u}=\varepsilon^{2}U_{\varepsilon}u$ one obtains%
\[
0=U_{\varepsilon}\left(  \Delta+\varepsilon^{2}\omega^{2}\right)
U_{\varepsilon}^{-1}U_{\varepsilon}u=\left(  U_{\varepsilon}\Delta
U_{\varepsilon}^{-1}+\varepsilon^{2}\omega^{2}\right)  U_{\varepsilon
}u=\left(  \Delta+\omega^{2}\right)  \varepsilon^{2}U_{\varepsilon}u=\left(
\Delta+\omega^{2}\right)  \tilde{u}\,,
\]
and%
\[
\varphi=U_{\varepsilon}U_{\varepsilon}^{-1}\varphi=U_{\varepsilon}\gamma
_{0}^{\mathrm{in}}u=\gamma_{0}^{\mathrm{in}}(\varepsilon)U_{\varepsilon
}u=\varepsilon^{-2}\gamma_{0}^{\mathrm{in}}(\varepsilon)\tilde{u}\,.
\]
Hence
\[%
\begin{cases}
(\Delta_{\Omega^{\varepsilon}}+\omega^{2})\tilde{u}=0\,,\\
\gamma_{0}^{\mathrm{in}}(\varepsilon)\tilde{u}=\varepsilon^{2}\varphi\,.
\end{cases}
\]
Using (\ref{trace_prime_conj}), this implies%
\[
U_{\varepsilon}D\!N_{\varepsilon\omega}U_{\varepsilon}^{-1}\varphi
=U_{\varepsilon}\gamma_{1}^{\mathrm{in}}U_{\varepsilon}^{-1}U_{\varepsilon
}u=\varepsilon\gamma_{1}^{\mathrm{in}}(\varepsilon)U_{\varepsilon
}u=\varepsilon^{-1}\,\gamma_{1}^{\mathrm{in}}(\varepsilon)\tilde
{u}=\varepsilon^{-1}\,D\!N_{\omega}(\varepsilon)\gamma_{0}^{\mathrm{in}%
}(\varepsilon)\tilde{u}=\varepsilon\,D\!N_{\omega}(\varepsilon)\varphi\,.
\]

\hfill
\end{proof}

\subsection{The model operator $H_{\omega}(\varepsilon)$\medskip}

\label{mod-op}$\left.  \ \right.  $

Let $\omega>0$ and let $\varepsilon>0$ be sufficiently small; we define%
\begin{equation}
\mathrm{dom}(H_{\omega}(\varepsilon)):=U_{\varepsilon}\left(  \mathrm{dom}%
(H_{\varepsilon,\omega})\right)  \,,\qquad H_{\omega}(\varepsilon
):=\varepsilon^{-2}U_{\varepsilon}H_{\varepsilon,\omega}U_{\varepsilon}%
^{-1}\,. \label{H_omega_eps_def}%
\end{equation}
By Theorem \ref{heo}, $H_{\omega}(\varepsilon)$ is a well-defined self-adjoint
and semi-bounded operator in $L^{2}(\mathbb{R}^{3})$ and, by relations
\eqref{trace_conj}, \eqref{trace_prime_conj}, \eqref{conj_id}, it can be more
explicitly defined as the restriction of%
\[
\Delta:H_{\Delta}^{0}(\mathbb{R}^{3}\backslash\Gamma^{\varepsilon})\subset
L^{2}(\mathbb{R}^{3})\rightarrow L^{2}(\mathbb{R}^{3})
\]
to the domain
\begin{equation}
\mathrm{dom}(H_{\omega}(\varepsilon))=\left\{  u\in H_{\Delta}^{0}%
(\mathbb{R}^{3}\backslash\Gamma^{\varepsilon})\cap H^{1}(\mathbb{R}%
^{3}):\left[  \gamma_{1}\left(  \varepsilon\right)  \right]  u=(\varepsilon
^{-2}-1)D\!N_{\omega}(\varepsilon)\gamma_{0}(\varepsilon)u\right\}  \,.
\label{H_omega_eps_dom}%
\end{equation}
Notice that the jump condition $[\gamma_{0}(\varepsilon)]u=0$ is incorporated
into $\mathrm{dom}(H_{\omega}(\varepsilon))\subseteq H^{1}(\mathbb{R}^{3})$.
Moreover, by \eqref{resolvent}, and Lemmata \ref{Lemma_conj0} and
\ref{Lemma_conj}, its resolvent
\[
R_{z}^{\omega}(\varepsilon):=(-H_{\omega}(\varepsilon)-z^{2})^{-1}%
=\varepsilon^{2}U_{\varepsilon}(-H_{\varepsilon,\omega}-\varepsilon^{2}%
z^{2})^{-1}U_{\varepsilon}^{-1}%
\]
is given by
\begin{equation}
R_{z}^{\omega}(\varepsilon)=R_{z}-\varepsilon^{2}U_{\varepsilon}%
S\!L_{\varepsilon z}((\varepsilon^{-2}-1)^{-1}+D\!N_{\varepsilon\omega
}S_{\varepsilon z})^{-1}D\!N_{\varepsilon\omega}\gamma_{0}R_{\varepsilon
z}U_{\varepsilon}^{-1}\,. \label{krf0}%
\end{equation}
For successive notational convenience, let introduce%
\begin{equation}
G_{z}(\varepsilon):=\varepsilon^{1/2}U_{\varepsilon}S\!L_{\varepsilon z}\,\,.
\label{nc}%
\end{equation}
Then, using (\ref{Lambda_def}) the resolvent formula \eqref{krf0} re-writes
as
\begin{equation}
R_{z}^{\omega}(\varepsilon)=R_{z}-G_{z}(\varepsilon)\Lambda_{z}^{\omega
}(\varepsilon)G_{-\bar{z}}(\varepsilon)^{\ast}\,. \label{krf}%
\end{equation}

\begin{remark}
\label{Remark_spectrum}By \eqref{krf}, the eigenvalues and the resonances of
$H_{\omega}(\varepsilon)$\medskip\ are those $-z^{2}$ such that $z\in
\mathbb{C}$ is a pole of the map $z\mapsto\Lambda_{z}^{\omega}(\varepsilon)$.
By the results in Theorem \ref{Theorem_Schur}, this map is analytic in the
ball $\left\{  z:\left\vert z\right\vert <r_{0}\right\}  $, whenever
$\omega\not =\omega_{M}$, or in $\left\{  z:r_{1}<\left\vert z\right\vert
<r_{0}\right\}  $ whenever $\omega=\omega_{M}$, provided that $\varepsilon$ is
small enough depending on $\omega$ and $r_{1}<r_{0}$. In the
small-$\varepsilon$ regime, this shows the absence of eigenvalues/resonances
in any open bounded region of the Riemann surface if $\omega\not =\omega_{M}$,
or the absence of eigenvalues/resonances away from the origin if
$\omega=\omega_{M}$.
\end{remark}

The operator $H_{\omega}(\varepsilon)$ provides a self-adjoint realization of
the Laplacian with boundary conditions at the interface $\Gamma^{\epsilon}$
and, by exploiting its definition and taking into account the boundary
conditions appearing in (\ref{H_omega_eps_dom}), for any $f\in L^{2}%
(\mathbb{R}^{3})$ one gets the resolvent equation%
\begin{equation}
\left(  -H_{\omega}(\varepsilon)-z^{2}\right)  u=f\iff\left\{
\begin{array}
[c]{l}%
\begin{array}
[c]{ccc}%
\left(  \Delta+z^{2}\right)  u=f\,, &  & \text{in }\mathbb{R}^{3}%
\backslash\Gamma^{\varepsilon}\,,
\end{array}
\\
\\%
\begin{array}
[c]{ccc}%
\left[  \gamma_{0}\left(  \varepsilon\right)  \right]  u=0\,, &  & \left[
\gamma_{1}\left(  \varepsilon\right)  \right]  u=(\varepsilon^{-2}%
-1)D\!N_{\omega}\left(  \varepsilon\right)  \gamma_{0}\left(  \varepsilon
\right)  u\,.
\end{array}
\end{array}
\right.  \label{H_omega_eps_id}%
\end{equation}
Notice that (\ref{H_omega_eps_id}) is equivalent to
\[
H_{\omega}(\varepsilon)u=f\quad\iff\quad%
\begin{cases}
\nabla\cdot(1_{\mathbb{R}^{3}\backslash\Omega^{\varepsilon}}+\varepsilon
^{-2}1_{\Omega^{\varepsilon}})\nabla u=(1_{\mathbb{R}^{3}\backslash
\Omega^{\varepsilon}}+\varepsilon^{-2}1_{\Omega^{\varepsilon}})f\,,\\
\\
\gamma_{0}^{\mathrm{in}}(\varepsilon)u=\gamma_{0}^{\mathrm{ex}}(\varepsilon
)u\,,\qquad\varepsilon^{-2}\gamma_{1}^{\mathrm{in}}(\varepsilon)u=\gamma
_{1}^{\mathrm{ex}}(\varepsilon)u\,.
\end{cases}
\]

\subsection{\label{res-conv}Proof of Theorem \ref{Th 1}}

We consider here the norm-resolvent limits of our model operator $H_{\omega
}(\varepsilon)$ as $\varepsilon\rightarrow0_{+}$. As should be clear from the
resolvent formula \eqref{krf}, to study the resolvent convergence of
$H_{\omega}(\varepsilon)$, the behavior of $\Lambda_{z}^{\omega}(\varepsilon)$
when $\varepsilon\ll1$ is of pivotal importance. The related asymptotic
formula, provided in Theorem \ref{Theorem_Schur}, undergoes a sudden change
depending on $\omega\neq\omega_{M}$ or $\omega=\omega_{M}$. This mechanism
produces a discontinuity of the map $\omega\mapsto R_{z}^{\omega}%
(\varepsilon)$ in the limit $\varepsilon\rightarrow0_{+}$. The proof of Theorem \ref{Th 1} goes as follows. \par By Theorem \ref{Theorem_Schur},%
\begin{equation}
\Lambda_{z}^{\omega}(\varepsilon)=\varepsilon^{-2+\alpha_{\omega}%
}S_{\varepsilon\omega}^{-1}\left(  \beta_{\omega,z}P_{0}+P_{0}O(\varepsilon
)P_{0}+O(\varepsilon^{2})\right)  S_{\varepsilon\omega}D\!N_{\varepsilon
\omega}\,, \label{Lexp}%
\end{equation}
where
\begin{equation}
\alpha_{\omega}=%
\begin{cases}
1\,, & \omega\not =\omega_{M}\\
0\,, & \omega=\omega_{M}\,,
\end{cases}
\qquad\beta_{\omega,z}=%
\begin{cases}
(E_{\omega}^{0})^{-1}\,, & \omega\not =\omega_{M}\,,\\
4\pi i(c_{\Omega}\,z)^{-1}\,, & \omega=\omega_{M}\,.
\end{cases}
\label{ab}%
\end{equation}
By Lemma \ref{anS},
\[
S_{\varepsilon\omega}^{-1}=S_{0}^{-1}+O(\varepsilon)\,,
\]
and, by Lemma \ref{Lemma_S_DN} (which relies on Lemma \ref{anK}),
\[
S_{\varepsilon\omega}D\!N_{\varepsilon\omega}=\widetilde{K}_{0}+\varepsilon
^{2}\omega^{2}K_{(2)}+O\left(  \varepsilon^{3}\right)  \,,\qquad
\widetilde{K}_{0}:=Q_{0}(1/2+K_{0})Q_{0}\,.
\]
Inserting these relations into \eqref{Lexp}, one gets
\begin{align}
&  \left.  \Lambda_{z}^{\omega}(\varepsilon)=\left(  (S_{0}^{-1}%
+O(\varepsilon)\right)  \left(  \varepsilon^{\alpha_{\omega}}\omega^{2}%
\beta_{\omega,z}P_{0}K_{(2)}+O\left(  \varepsilon^{1+\alpha_{\omega}}\right)
\right)  \right. \nonumber\\
& \nonumber\\
&  \left.  =\varepsilon^{\alpha_{\omega}}\omega^{2}\beta_{\omega,z}S_{0}%
^{-1}P_{0}K_{(2)}+O\left(  \varepsilon^{1+\alpha_{\omega}}\right)  \right.
\label{Lambda_exp}%
\end{align}
By Corollary \eqref{Lemma_trace_R_eps_k_U_eps},
\begin{equation}
G_{z}(\varepsilon)=G_{z}+O(\varepsilon^{1/2})\,, \label{G_z_exp}%
\end{equation}
where
\begin{align}
&  \left.
\begin{array}
[c]{ccc}%
G_{z}:H^{-1/2}(\Gamma)\rightarrow L^{2}(\mathbb{R}^{3})\,, &  & G_{z}%
\phi:=\langle\phi\rangle\,\mathcal{G}_{z}^{y_{0}}\,,
\end{array}
\right. \label{G_z_def_1}\\
& \nonumber\\
&  \left.
\begin{array}
[c]{ccc}%
\mathcal{G}_{z}^{y_{0}}(x):=\mathcal{G}_{z}(x-y_{0})\,, &  & \langle
\phi\rangle:=\langle1,\phi\rangle_{H^{1/2}(\Gamma),H^{-1/2}(\Gamma)}%
\equiv\langle1,\phi\rangle_{H^{3/2}(\Gamma),H^{-3/2}(\Gamma)}\,.
\end{array}
\right.  \label{G_z_def_2}%
\end{align}
By $G_{z}^{\ast}u=\langle\mathcal{G}_{z}^{y_{0}},u\rangle_{L^{2}%
(\mathbb{R}^{3})}1$, one gets
\[
\mathrm{ran}(G_{z}^{\ast})=\mathbb{C}=\mathrm{ran}(P_{0})=\mathrm{ker}%
(Q_{0})\,,
\]
and from Lemma \ref{Lemma_Schur_coeff} follows%
\begin{equation}
\omega^{2}P_{0}K_{(2)}P_{0}=-\frac{\omega^{2}}{\omega_{M}^{2}}\,P_{0}\,.
\label{qt}%
\end{equation}
Hence, the resolvent formula \eqref{krf} rephrases as%
\begin{align}
&  \left.  R_{z}^{\omega}(\varepsilon)-R_{z}=-G_{z}(\varepsilon)\Lambda
_{z}^{\omega}(\varepsilon)G_{-\bar{z}}(\varepsilon)^{\ast}\right. \nonumber\\
& \nonumber\\
&  \left.  =-\left(  G_{z}+O(\varepsilon^{1/2})\right)  \left(  \varepsilon
^{\alpha_{\omega}}\omega^{2}\beta_{\omega,z}S_{0}^{-1}P_{0}K_{(2)}%
+O(\varepsilon^{1+\alpha_{\omega}})\right)  \left(  G_{-\bar{z}}^{\ast
}+O(\varepsilon^{1/2})\right)  \right. \nonumber\\
& \nonumber\\
&  \left.  =\varepsilon^{\alpha_{\omega}}\omega^{2}\beta_{\omega,z}G_{z}%
S_{0}^{-1}P_{0}K_{(2)}G_{-\bar{z}}^{\ast}+O(\varepsilon^{1/2+\alpha_{\omega}%
})\right. \nonumber\\
& \nonumber\\
&  \left.  =\left\{
\begin{array}
[c]{ll}%
O(\varepsilon)\,, & \omega\not =\omega_{M}\,,\\
& \\
\frac{4\pi i}{c_{\Omega}\,z}\,G_{z}S_{0}^{-1}P_{0}G_{-\bar{z}}^{\ast
}+O(\varepsilon^{1/2})\,, & \omega=\omega_{M}\,.
\end{array}
\right.  \right.  \label{qq}%
\end{align}
The proof of Theorem \ref{Th 1} is then concluded by \eqref{Krein_point_id}
and the relation
\[
G_{z}S_{0}^{-1}P_{0}G_{-\bar{z}}^{\ast}u=\langle S_{0}^{-1}1,1\rangle
_{H^{-1/2}(\Gamma),H^{1/2}(\Gamma)}\,\mathcal{G}_{z}^{y_{0}}\langle
\mathcal{G}_{-\bar{z}}^{y_{0}},u\rangle_{L^{2}(\mathbb{R}^{3})}=c_{\Omega
}\,\mathcal{G}_{z}^{y_{0}}\langle\mathcal{G}_{-\bar{z}}^{y_{0}},u\rangle
_{L^{2}(\mathbb{R}^{3})}\,.
\]
\hfill

\section{\label{Sec-Scatt}Generalized eigenfunctions and asymptotic scattering
solutions}

Let $\omega,\kappa>0$; by Theorem \ref{Theorem_Schur}, $\Lambda_{\kappa
}^{\omega}(\varepsilon)$ is well-defined provided that $\varepsilon$ is
sufficiently small. We next use this property and consider the stationary
scattering problem related to $H_{\omega}(\varepsilon)$. According to the
definitions of Section \ref{mod-op}, $H_{\omega}(\varepsilon)$ acts as
$\Delta$ outside $\Gamma^{\varepsilon}$, since $H_{\omega}(\varepsilon
)\subset\Delta|H_{\Delta}^{0}(\mathbb{R}^{3}\backslash\Gamma^{\varepsilon})$,
while its domain is characterized by the interface conditions%
\[%
\begin{array}
[c]{ccc}%
\left[  \gamma_{0}(\varepsilon)\right]  u=0\,, &  & \left[  \gamma
_{1}(\varepsilon)\right]  u=\left(  \varepsilon^{-2}-1\right)
D\!N_{\varepsilon\omega}\gamma_{0}u\,.
\end{array}
\]
Hence, a generalized eigenfunctions $u_{\kappa}^{\omega}\left(  \varepsilon
\right)  \in H_{\mathrm{loc}}^{2}(\mathbb{R}^{3}\backslash\Gamma^{\varepsilon
}):=H^{2}(\Omega^{\varepsilon})\oplus H_{\mathrm{loc}}^{2}(\mathbb{R}%
^{3}\backslash\overline{\Omega^{\varepsilon}})$ of $H_{\omega}(\varepsilon)$
with eigenvalue $-\kappa^{2}$ solves the problem%
\begin{equation}
\left\{
\begin{array}
[c]{l}%
\begin{array}
[c]{ccc}%
\left(  \Delta+\kappa^{2}\right)  u_{\kappa}^{\omega}\left(  \varepsilon
\right)  =0 &  & \text{in }\mathbb{R}^{3}\backslash\Gamma^{\varepsilon}\,,
\end{array}
\\
\\
\left[  \gamma_{0}(\varepsilon)\right]  u_{\kappa}^{\omega}(\varepsilon
)=0\,,\\
\\
\left[  \gamma_{1}(\varepsilon)\right]  u_{\kappa}^{\omega}(\varepsilon
)=\left(  \varepsilon^{-2}-1\right)  D\!N_{\omega}(\varepsilon)\gamma
_{0}(\varepsilon)u_{\kappa}^{\omega}\left(  \varepsilon\right)  \,.
\end{array}
\right.  \label{gen-eig}%
\end{equation}
By the next result, the generalized eigenfunctions of $H_{\omega}\left(
\varepsilon\right)  $ relate to the scattering solutions and to the functions
of the kind $G_{\kappa}(\varepsilon)\Lambda_{\kappa}^{\omega}(\varepsilon
)\phi$, with: $\phi\in H^{1/2}(\Gamma)$. Notice that, according to the mapping
properties of $\Lambda_{\kappa}^{\omega}\left(  \varepsilon\right)  $ and
$R_{\kappa}$, the functions%
\[
G_{\kappa}(\varepsilon)\Lambda_{\kappa}^{\omega}\left(  \varepsilon\right)
\phi=\varepsilon^{1/2}U_{\varepsilon}S\!L_{\varepsilon\kappa}\Lambda_{\kappa
}^{\omega}(\varepsilon)\phi=\varepsilon^{1/2}U_{\varepsilon}R_{\varepsilon
\kappa}\ast\gamma_{0}^{\ast}\Lambda_{\kappa}^{\omega}(\varepsilon)\phi\,,
\]
belong to the weighted Sobolev space $H_{-\alpha}^{2}(\mathbb{R}^{3})$,
$\alpha>1/2$.

\begin{theorem}
\label{Lemma_LAP}Let $\kappa>0$ and $u_{\kappa}^{\mathrm{in}}\in H_{-\alpha
}^{2}(\mathbb{R}^{3})$, $\alpha>1/2$, be a solution of the homogeneous
Helmholtz equation%
\begin{equation}
\left(  \Delta+\kappa^{2}\right)  u_{\kappa}^{\mathrm{in}}=0\,.
\label{Helmholtz_eq}%
\end{equation}
The scattering problem%
\begin{equation}
\left\{
\begin{array}
[c]{l}%
\begin{array}
[c]{ccc}%
\left(  \Delta+\kappa^{2}\right)  \left(  u_{\kappa}^{\mathrm{in}}%
+u_{\kappa,\omega}^{\mathrm{sc}}(\varepsilon)\right)  =0 &  & \text{in
}\mathbb{R}^{3}\backslash\Gamma^{\varepsilon}\,,
\end{array}
\\
\\
\left[  \gamma_{0}(\varepsilon)\right]  u_{\kappa,\omega}^{\mathrm{sc}}\left(
\varepsilon\right)  =0\,,\\
\\
\left[  \gamma_{1}(\varepsilon)\right]  u_{\kappa,\omega}^{\mathrm{sc}%
}(\varepsilon)=\left(  \varepsilon^{-2}-1\right)  D\!N_{\omega}(\varepsilon
)\gamma_{0}(\varepsilon)\left(  u_{\kappa}^{\mathrm{in}}+u_{\kappa,\omega
}^{\mathrm{sc}}\left(  \varepsilon\right)  \right)  \,,\\
\\
\lim_{\left\vert x\right\vert \rightarrow\infty}\left\vert x\right\vert
\left(  \frac{x}{|x|}\cdot\nabla-i\kappa\right)  u_{\kappa,\omega
}^{\mathrm{sc}}(\varepsilon)=0\,,
\end{array}
\right.  \label{scattering_eq}%
\end{equation}
admits an unique solution in $H_{-\alpha}^{2}(\mathbb{R}^{3}\backslash
\Gamma^{\varepsilon})$ given by%
\begin{equation}
u_{\kappa,\omega}^{\mathrm{sc}}(\varepsilon):=-G_{\kappa}(\varepsilon
)\Lambda_{\kappa}^{\omega}(\varepsilon)\,\gamma_{0}\left(  u_{\kappa
}^{\mathrm{in}}\circ\Phi_{\varepsilon}\right)  \,. \label{scattering_sol}%
\end{equation}

\end{theorem}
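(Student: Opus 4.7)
The plan is to verify directly that the function prescribed by \eqref{scattering_sol} satisfies each clause of the scattering problem \eqref{scattering_eq}, and then to establish uniqueness via the spectral information already available for $H_\omega(\varepsilon)$. The well-posedness of $u_{\kappa,\omega}^{\mathrm{sc}}(\varepsilon)$ in $H_{-\alpha}^2(\mathbb{R}^3\setminus\Gamma^\varepsilon)$ follows from Theorem \ref{Theorem_Schur} (existence and boundedness of $\Lambda_\kappa^\omega(\varepsilon)\in\mathcal{L}(H^{1/2}(\Gamma),H^{-1/2}(\Gamma))$ for $\kappa>0$ and $\varepsilon$ small enough) together with the standard mapping properties of $SL_{\varepsilon\kappa}$ composed with the coordinate change $\Phi_\varepsilon$.

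Since $G_\kappa(\varepsilon)=\varepsilon^{1/2}U_\varepsilon SL_{\varepsilon\kappa}$ is, up to scalar normalization, the single-layer potential on $\Gamma^\varepsilon$ at wavenumber $\kappa$, the dilation identity $\Delta U_\varepsilon=\varepsilon^{-2}U_\varepsilon\Delta$ immediately yields $(\Delta+\kappa^2)G_\kappa(\varepsilon)\phi=0$ off $\Gamma^\varepsilon$, while the change of variables at infinity converts the Sommerfeld condition at wavenumber $\varepsilon\kappa$ for $SL_{\varepsilon\kappa}\phi$ into the outgoing condition at wavenumber $\kappa$ for $G_\kappa(\varepsilon)\phi$. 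Continuity of the Dirichlet trace of a single layer gives $[\gamma_0(\varepsilon)]u_{\kappa,\omega}^{\mathrm{sc}}(\varepsilon)=0$.

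The main algebraic step is the verification of the normal-derivative interface condition. Using $[\gamma_1]SL_{\varepsilon\kappa}=-I$ together with the conjugations $U_\varepsilon\gamma_1 U_\varepsilon^{-1}=\varepsilon\gamma_1(\varepsilon)$, $U_\varepsilon\gamma_0 U_\varepsilon^{-1}=\gamma_0(\varepsilon)$ from Lemma \ref{Lemma_conj0} and $U_\varepsilon D\!N_{\varepsilon\omega}U_\varepsilon^{-1}=\varepsilon D\!N_\omega(\varepsilon)$ from Lemma \ref{Lemma_conj}, one obtains
\[
[\gamma_1(\varepsilon)]u_{\kappa,\omega}^{\mathrm{sc}}(\varepsilon)=\varepsilon^{-1/2}U_\varepsilon\Lambda_\kappa^\omega(\varepsilon)\,\gamma_0(u_\kappa^{\mathrm{in}}\circ\Phi_\varepsilon)
\]
and, on the other hand,
\[
\gamma_0(\varepsilon)\bigl(u_\kappa^{\mathrm{in}}+u_{\kappa,\omega}^{\mathrm{sc}}(\varepsilon)\bigr)=\varepsilon^{1/2}U_\varepsilon\bigl[\varepsilon\,I-S_{\varepsilon\kappa}\Lambda_\kappa^\omega(\varepsilon)\bigr]\gamma_0(u_\kappa^{\mathrm{in}}\circ\Phi_\varepsilon)\,.
\]
Equating the former with $(\varepsilon^{-2}-1)D\!N_\omega(\varepsilon)$ times the latter and cancelling the common prefactor $\varepsilon^{-1/2}U_\varepsilon$ reduces the task to the purely operator-theoretic identity
\[
(\varepsilon^{-2}-1)\bigl[\varepsilon D\!N_{\varepsilon\omega}-D\!N_{\varepsilon\omega}S_{\varepsilon\kappa}\Lambda_\kappa^\omega(\varepsilon)\bigr]=\Lambda_\kappa^\omega(\varepsilon)\,,
\]
which is verified by setting $A:=\varepsilon^2+(1-\varepsilon^2)D\!N_{\varepsilon\omega}S_{\varepsilon\kappa}$, so that by \eqref{Lambda_def} one has $\Lambda_\kappa^\omega(\varepsilon)=\varepsilon(1-\varepsilon^2)A^{-1}D\!N_{\varepsilon\omega}$; the left-hand side rearranges as $\tfrac{1-\varepsilon^2}{\varepsilon}[A-(1-\varepsilon^2)D\!N_{\varepsilon\omega}S_{\varepsilon\kappa}]A^{-1}D\!N_{\varepsilon\omega}=\varepsilon(1-\varepsilon^2)A^{-1}D\!N_{\varepsilon\omega}$, as required.

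For uniqueness, the difference $v$ of two solutions lies in $H_{-\alpha}^2(\mathbb{R}^3\setminus\Gamma^\varepsilon)$, solves $(\Delta+\kappa^2)v=0$ off $\Gamma^\varepsilon$, satisfies the homogeneous transmission conditions encoded in $\mathrm{dom}(H_\omega(\varepsilon))$, and fulfills the outgoing radiation condition. The limiting absorption principle applied to \eqref{krf} at $z=\kappa+i0$ then expresses $v$ through the boundary map $\Lambda_{\kappa+i0}^\omega(\varepsilon)$; its regularity at $z=\kappa>0$ (no poles/resonances for $\varepsilon$ small, by Remark \ref{Remark_spectrum}) forces $v\equiv 0$. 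The only mildly delicate point I anticipate is the justification of the LAP in the weighted-$L^2$ framework matching the $H_{-\alpha}^2$-regularity, but the boundary estimates on $\Lambda_{\kappa+i0}^\omega(\varepsilon)$ required for this are furnished directly by Theorem \ref{Theorem_Schur}.
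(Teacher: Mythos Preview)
Your existence argument is correct and in fact more streamlined than the paper's: the paper first proves existence and uniqueness for an auxiliary \emph{dilated} scattering problem on the fixed interface $\Gamma$ (with solution $\psi_{\kappa,\omega}^{\mathrm{sc}}(\varepsilon)=-\varepsilon^{-1}S\!L_{\kappa}\Lambda_{\kappa/\varepsilon}^{\omega}(\varepsilon)\gamma_{0}\psi_{\kappa}^{\mathrm{in}}$), and only afterwards transfers the result to $\Gamma^{\varepsilon}$ via the conjugation $U_{\varepsilon}$. You bypass that detour by working directly on $\Gamma^{\varepsilon}$, and your operator identity
\[
(\varepsilon^{-2}-1)\bigl[\varepsilon D\!N_{\varepsilon\omega}-D\!N_{\varepsilon\omega}S_{\varepsilon\kappa}\Lambda_\kappa^\omega(\varepsilon)\bigr]=\Lambda_\kappa^\omega(\varepsilon)
\]
is exactly the content of the paper's verification of the jump condition, just packaged more compactly. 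The Sommerfeld check via the change of variables is the same computation the paper carries out in its second step.

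Your uniqueness argument, however, has a gap. Knowing that $z\mapsto\Lambda_{z}^{\omega}(\varepsilon)$ is analytic near $\kappa>0$ (Remark~\ref{Remark_spectrum}) tells you there are no embedded $L^{2}$-eigenvalues or resonances, but a radiating $H_{-\alpha}^{2}$ solution $v$ of the homogeneous transmission problem is \emph{a priori} neither in $L^{2}$ nor expressed by the resolvent formula \eqref{krf} applied to some source, so the LAP does not by itself force $v\equiv 0$; you would still need an intermediate step (e.g.\ a layer-potential representation of $v$ in the exterior plus invertibility of the resulting boundary system) to close the argument. The paper avoids this by the classical route: Rellich's lemma (\cite[Lemma~9.8]{McLe}) and \cite[Lemma~9.9]{McLe} give $v=0$ in $\Omega_{\mathrm{ex}}$ directly from the radiation condition, and then the transmission conditions yield $\gamma_{0}^{\mathrm{in}}v=\gamma_{1}^{\mathrm{in}}v=0$, so that $v$ solves the interior Helmholtz equation with vanishing Cauchy data and is therefore zero in $\Omega_{\mathrm{in}}$ as well. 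That argument is short and self-contained; you should replace your LAP sketch with it.
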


\begin{proof}
We proceed in two steps: at first we consider a dilated problem with interface
conditions assigned on $\Gamma$ and prove the result in this setting. Then, we
discuss (\ref{scattering_eq}) by using the dilation mapping.

Let $\psi_{\kappa}^{\mathrm{in}}\in H_{-\alpha}^{2}(\mathbb{R}^{3})$ be a
solution of the Helmholtz equation (\ref{Helmholtz_eq}) and consider the
dilated scattering problem%
\begin{equation}
\left\{
\begin{array}
[c]{l}%
\begin{array}
[c]{ccc}%
\left(  \Delta+\kappa^{2}\right)  \left(  \psi_{\kappa}^{\mathrm{in}}%
+\psi_{\kappa,\omega}^{\mathrm{sc}}(\varepsilon)\right)  =0\,, &  & \text{in
}\mathbb{R}^{3}\backslash\Gamma\,,
\end{array}
\\
\\
\left[  \gamma_{0}\right]  \psi_{\kappa,\omega}^{\mathrm{sc}}\left(
\varepsilon\right)  =0\,,\\
\\
\left[  \gamma_{1}\right]  \psi_{\kappa,\omega}^{\mathrm{sc}}\left(
\varepsilon\right)  =\left(  \varepsilon^{-2}-1\right)  D\!N_{\varepsilon
\omega}\gamma_{0}\left(  \psi_{\kappa}^{\mathrm{in}}+\psi_{\kappa,\omega
}^{\mathrm{sc}}(\varepsilon)\right)  \,,\\
\\
\lim_{\left\vert x\right\vert \rightarrow\infty}\left\vert x\right\vert
\left(  \frac{x}{|x|}\cdot\nabla-i\kappa\right)  \psi_{\kappa,\omega
}^{\mathrm{sc}}(\varepsilon)=0\,,
\end{array}
\right.  \label{scattering_eq_dil}%
\end{equation}
Notice that, due to (\ref{conj_id}), $D\!N_{\omega}(\varepsilon)$ is here
replaced by $D\!N_{\varepsilon\omega}$. We next proceed as in \cite[Lemmata
5.1 and 5.3]{MaPoSi LAP}, where a similar problem involving abstract boundary
conditions were discussed. Let us look for a solution of the form:
$\psi_{\kappa,\omega}^{\mathrm{sc}}(\varepsilon):=-\varepsilon^{-1}%
S\!L_{\kappa}\Lambda_{\kappa/\varepsilon}^{\omega}\left(  \varepsilon\right)
\gamma_{0}\psi_{\kappa}^{\mathrm{in}}$. By \cite[Lemma 5.3]{MaPoSi LAP},
$\psi_{\kappa,\omega}^{\mathrm{sc}}\left(  \varepsilon\right)  $ satisfies the
Sommerfeld radiation condition in (\ref{scattering_eq_dil}). Since the
distributions $\gamma_{0}^{\ast}\phi$, $\phi\in H^{1/2}(\Gamma)$, are
supported on $\Gamma$, from
\[
\left(  \Delta+k^{2}\right)  S\!L_{\kappa}\phi=\left(  \Delta+k^{2}\right)
R_{\kappa}\gamma_{0}^{\ast}\phi=-\gamma_{0}^{\ast}\phi\,,
\]
and (\ref{Helmholtz_eq}) there follows%
\[%
\begin{array}
[c]{ccc}%
\left(  \Delta+k^{2}\right)  \left(  \psi_{\kappa}^{\mathrm{in}}+\psi
_{\kappa,\omega}^{\mathrm{sc}}(\varepsilon)\right)  =0\,, &  & \text{in
}\mathbb{R}^{3}\backslash\Gamma\,.
\end{array}
\]
The boundary conditions in (\ref{scattering_eq_dil}) follows by proceeding
along the same lines as in the proof of Theorem \ref{heo} (see the
calculations there involving the function $\widetilde{u}$); by (\ref{jump})
and $\psi_{\kappa}^{\mathrm{in}}\in H_{-\alpha}^{2}(\mathbb{R}^{3})$ it
results $\left[  \gamma_{0}\right]  \psi_{\kappa,\omega}^{\mathrm{sc}}\left(
\varepsilon\right)  =0$ and%
\[
\left[  \gamma_{1}\right]  \psi_{\kappa,\omega}^{\mathrm{sc}}\left(
\varepsilon\right)  =-\varepsilon^{-1}\left[  \gamma_{1}\right]  SL_{\kappa
}\Lambda_{\kappa/\varepsilon}^{\omega}(\varepsilon)\gamma_{0}\psi_{\kappa
}^{\mathrm{in}}=\varepsilon^{-1}\Lambda_{\kappa/\varepsilon}^{\omega
}(\varepsilon)\gamma_{0}\psi_{\kappa}^{\mathrm{in}}\,.
\]
Furthermore, from%
\begin{align*}
&  \left.  \left(  \varepsilon^{-2}-1\right)  D\!N_{\varepsilon\omega}%
\gamma_{0}\left(  \psi_{\kappa}^{\mathrm{in}}+\psi_{\kappa,\omega
}^{\mathrm{sc}}(\varepsilon)\right)  =\left(  \varepsilon^{-2}-1\right)
D\!N_{\varepsilon\omega}\gamma_{0}\left(  \psi_{\kappa}^{\mathrm{in}%
}-\varepsilon^{-1}S\!L_{\kappa}\Lambda_{\kappa/\varepsilon}^{\omega}\left(
\varepsilon\right)  \gamma_{0}\psi_{\kappa}^{\mathrm{in}}\right)  \right. \\
& \\
&  \left.  =\left(  \varepsilon^{-2}-1\right)  D\!N_{\varepsilon\omega}%
\gamma_{0}\psi_{\kappa}^{\mathrm{in}}-\varepsilon^{-1}\left(  \varepsilon
^{-2}-1\right)  D\!N_{\varepsilon\omega}\gamma_{0}S\!L_{\kappa}\Lambda
_{\kappa/\varepsilon}^{\omega}(\varepsilon)\gamma_{0}\psi_{\kappa
}^{\mathrm{in}}\right. \\
& \\
&  \left.  =\left(  \varepsilon^{-2}-1\right)  D\!N_{\varepsilon\omega}%
\gamma_{0}\psi_{\kappa}^{\mathrm{in}}-\varepsilon^{-1}\left(  1+\left(
\varepsilon^{-2}-1\right)  D\!N_{\varepsilon\omega}S_{\kappa}\right)
\Lambda_{\kappa/\varepsilon}^{\omega}(\varepsilon)\gamma_{0}\psi_{\kappa
}^{\mathrm{in}}+\varepsilon^{-1}\Lambda_{\kappa/\varepsilon}^{\omega
}(\varepsilon)\gamma_{0}\psi_{\kappa}^{\mathrm{in}}\right. \\
& \\
&  \left.  =\left(  \varepsilon^{-2}-1\right)  D\!N_{\varepsilon\omega}%
\gamma_{0}\psi_{\kappa}^{\mathrm{in}}-\left(  \varepsilon^{-2}-1\right)
D\!N_{\varepsilon\omega}\gamma_{0}\psi_{\kappa}^{\mathrm{in}}+\varepsilon
^{-1}\Lambda_{\kappa/\varepsilon}^{\omega}(\varepsilon)\gamma_{0}\psi_{\kappa
}^{\mathrm{in}}=\varepsilon^{-1}\Lambda_{\kappa/\varepsilon}^{\omega
}(\varepsilon)\gamma_{0}\psi_{\kappa}^{\mathrm{in}}\,,\right.
\end{align*}
there follows%
\[
\left[  \gamma_{1}\right]  \psi_{\kappa,\omega}^{\mathrm{sc}}\left(
\varepsilon\right)  =\left(  \varepsilon^{-2}-1\right)  D\!N_{\varepsilon
\omega}\gamma_{0}\left(  \psi_{\kappa}^{\mathrm{in}}+\psi_{\kappa,\omega
}^{\mathrm{sc}}(\varepsilon)\right)  \,.
\]
Hence, $\psi_{\kappa,\omega}^{\mathrm{sc}}(\varepsilon)$ solves the dilated
scattering problem (\ref{scattering_eq_dil}). To conclude this part of the
proof, we need to show that such a solution is unique. Let us assume that
$v_{\kappa}$ solve the same scattering problem; then, the difference
$w_{\kappa}:=\psi_{\kappa,\omega}^{\mathrm{sc}}(\varepsilon)-v_{\kappa}$
solves the exterior Helmholtz equation%
\begin{equation}
(\Delta_{\mathbb{R}^{3}\backslash\overline{\Omega}}+\kappa^{2})w_{\kappa}=0
\label{Helmholtz_eq_ext}%
\end{equation}
and satisfies the radiation condition%
\begin{equation}
\lim_{\left\vert x\right\vert \rightarrow\infty}\left\vert x\right\vert
\left(  \frac{x}{|x|}\cdot\nabla-i\kappa\right)  w_{\kappa}=0\,. \label{RC}%
\end{equation}
Let $R>0$ such that $\Omega\subset B_{R}=\{x\in\mathbb{R}^{3}:|x|<R\}$; by
\cite[eq. (9.19)]{McLe} there follows%
\[
\lim_{R\rightarrow\infty}\int_{|x|=R}|w_{\kappa}(x)|^{2}\,d\sigma(x)=0\,,
\]
and, by Rellich's Lemma (see, e.g., \cite[Lemma 9.8]{McLe}), this entails
$w_{\kappa}=0$ in $\mathbb{R}^{3}\backslash\overline{B_{R}}$. The Green
identity in $B_{R}\backslash\overline{\Omega}$ then yields
\[
\int_{B_{R}\backslash\overline{\Omega}}|\nabla w_{\kappa}(x)|^{2}%
dx-\int_{\Gamma^{\varepsilon}}\overline{\gamma_{0}w_{\kappa}}(x)\,\gamma
_{1}w_{\kappa}(x)\,d\sigma(x)=\kappa^{2}\,\int_{B_{R}\backslash\overline
{\Omega}}|w_{\kappa}(x)|^{2}\,dx
\]
and hence%
\[
\operatorname{Im}\int_{\Gamma}\overline{\gamma_{0}w_{\kappa}}(x)\,\gamma
_{1}w_{\kappa}(x)\,d\sigma(x)=0\,.
\]
By \cite[Lemma 9.9]{McLe}, this gives $w_{\kappa}(x)=0$ whenever
$x\in\mathbb{R}^{3}\backslash\overline{\Omega}$. Since $[\gamma_{0}]w_{\kappa
}=0$, the boundary conditions in (\ref{scattering_eq_dil}) imply $\gamma
_{0}^{\mathrm{in}}w_{\kappa}=\gamma_{1}^{\mathrm{in}}w_{\kappa}=0$ and so
$w_{\kappa}$ solves the interior Helmholtz equation with both zero Dirichlet
and Neumann boundary conditions. This implies $w_{\kappa}(x)=0$ whenever
$x\in\Omega$.

Let us next consider (\ref{scattering_eq}); setting $u_{\kappa,\omega}\left(
\varepsilon\right)  :=u_{\kappa}^{\mathrm{in}}+u_{\kappa,\omega}^{\mathrm{sc}%
}(\varepsilon)$ and using the identities (\ref{trace_conj}),
(\ref{trace_prime_conj}), (\ref{conj_id}) and: $U_{\varepsilon}^{-1}\Delta
U_{\varepsilon}=\varepsilon^{-2}\Delta$, we get
\begin{equation}
\left\{
\begin{array}
[c]{l}%
\begin{array}
[c]{ccc}%
\varepsilon^{-2}\left(  -\Delta-\varepsilon^{2}k^{2}\right)  U_{\varepsilon
}^{-1}u_{\kappa,\omega}(\varepsilon)=0\,, &  & \text{in }\mathbb{R}%
^{3}\backslash\Gamma^{\varepsilon}\,,
\end{array}
\\
\\
\left[  \gamma_{0}(\varepsilon)\right]  U_{\varepsilon}^{-1}u_{\kappa,\omega
}(\varepsilon)=0\,,\\
\\
\left[  \gamma_{1}(\varepsilon)\right]  U_{\varepsilon}^{-1}u_{\kappa,\omega
}(\varepsilon)=\left(  \varepsilon^{-2}-1\right)  D\!N_{\varepsilon\omega
}\gamma_{0}U_{\varepsilon}^{-1}u_{\kappa,\omega}\left(  \varepsilon\right)
\,.
\end{array}
\right.  \label{tpr}%
\end{equation}
Then, the function%
\begin{equation}
\psi_{\varepsilon\kappa,\omega}(\varepsilon):=\varepsilon^{-2}U_{\varepsilon
}^{-1}u_{\kappa,\omega}(\varepsilon)\,, \label{scattering_sol_scaled}%
\end{equation}
solves the problem%
\[
\left\{
\begin{array}
[c]{l}%
\begin{array}
[c]{ccc}%
\left(  -\Delta-\varepsilon^{2}k^{2}\right)  \psi_{\varepsilon\kappa,\omega
}\left(  \varepsilon\right)  =0\,, &  & \text{in }\mathbb{R}^{3}%
\backslash\Gamma\,,
\end{array}
\\
\\
\left[  \gamma_{0}(\varepsilon)\right]  \psi_{\varepsilon\kappa,\omega}\left(
\varepsilon\right)  =0\,,\\
\\
\left[  \gamma_{1}(\varepsilon)\right]  \psi_{\varepsilon\kappa,\omega}\left(
\varepsilon\right)  =\left(  \varepsilon^{-2}-1\right)  D\!N_{\varepsilon
\omega}\gamma_{0}\psi_{\varepsilon\kappa,\omega}(\varepsilon)\,.
\end{array}
\right.
\]
Let $\psi_{\varepsilon\kappa,\omega}^{\mathrm{sc}}(\varepsilon):=\varepsilon
^{-2}U_{\varepsilon}^{-1}u_{\kappa,\omega}^{\mathrm{sc}}\left(  \varepsilon
\right)  $; since $u_{\kappa,\omega}^{\mathrm{sc}}\left(  \varepsilon\right)
$ fulfills the radiation conditions in (\ref{scattering_eq}), it follows%
\begin{align*}
&  \left.  \lim_{\left\vert x\right\vert \rightarrow\infty}\left(  \hat
{x}\cdot\nabla\mp i\varepsilon\kappa\right)  \psi_{\varepsilon\kappa,\omega
}^{\mathrm{sc}}(\varepsilon)=\lim_{\left\vert x\right\vert \rightarrow\infty
}\left(  \hat{x}\cdot\nabla\mp i\varepsilon\kappa\right)  \varepsilon
^{-2}\left(  U_{\varepsilon}^{-1}u_{\kappa,\omega}^{\mathrm{sc}}%
(\varepsilon)\right)  \left(  x\right)  \right. \\
& \\
&  \left.  =\varepsilon^{-1/2}\lim_{\left\vert x\right\vert \rightarrow\infty
}\left(  \hat{x}\cdot\nabla\mp i\varepsilon\kappa\right)  u_{\kappa,\omega
}^{\mathrm{sc}}(\varepsilon)\left(  y_{0}+\varepsilon\,\left(  x-y_{0}\right)
\right)  \right. \\
& \\
&  \left.  =\varepsilon^{1/2}\lim_{\left\vert x\right\vert \rightarrow\infty
}\left(  \hat{x}\cdot\left(  \nabla u_{\kappa,\omega}^{\mathrm{sc}}\left(
\varepsilon\right)  \right)  \left(  y_{0}+\varepsilon\,\left(  x-y_{0}%
\right)  \right)  \mp i\kappa u_{\kappa,\omega}^{\mathrm{sc}}\left(
\varepsilon\right)  \left(  y_{0}+\varepsilon\,\left(  x-y_{0}\right)
\right)  \right)  =0\,.\right.
\end{align*}
Hence, a scaled radiation condition holds for $\psi_{\varepsilon\kappa,\omega
}^{\mathrm{sc}}(\varepsilon)$. Moreover, from (\ref{Helmholtz_eq}) follows%
\[
\left(  -\Delta-\varepsilon^{2}\kappa^{2}\right)  \varepsilon^{-2}%
U_{\varepsilon}^{-1}u_{\kappa}^{\mathrm{in}}=U_{\varepsilon}^{-1}\left(
-\Delta-\kappa^{2}\right)  u_{\kappa}^{\mathrm{in}}=0\,.
\]
Then, $\psi_{\varepsilon\kappa}^{\mathrm{in}}:=\varepsilon^{-2}U_{\varepsilon
}^{-1}u_{\kappa}^{\mathrm{in}}$ is a solution of the Helmholtz equation at
energy $\varepsilon^{2}\kappa^{2}$. Therefore, the total field $\psi
_{\varepsilon\kappa,\omega}(\varepsilon)=\psi_{\varepsilon\kappa}%
^{\mathrm{in}}+\psi_{\varepsilon\kappa,\omega}^{\mathrm{sc}}(\varepsilon)$
solves the dilated scattering problem (\ref{scattering_eq_dil}) at energy
$\varepsilon^{2}\kappa^{2}$, whose unique solution, by the first part of the
proof, writes as%
\[
\psi_{\varepsilon\kappa,\omega}^{\mathrm{sc}}(\varepsilon)=-\varepsilon
^{-1}S\!L_{\varepsilon\kappa}\Lambda_{\kappa}^{\omega}\left(  \varepsilon
\right)  \gamma_{0}\psi_{\varepsilon\kappa}^{\mathrm{in}}\,.
\]
From (\ref{scattering_sol_scaled}) there follows%
\begin{align*}
&  \left.  u_{\kappa,\omega}(\varepsilon)=\varepsilon^{2}U_{\varepsilon}%
\psi_{\varepsilon\kappa,\omega}(\varepsilon)=\varepsilon^{2}U_{\varepsilon
}\left(  \psi_{\varepsilon\kappa}^{\mathrm{in}}-\varepsilon^{-1}%
S\!L_{\varepsilon\kappa}\Lambda_{\kappa}^{\omega}\left(  \varepsilon\right)
\gamma_{0}\psi_{\varepsilon\kappa}^{\mathrm{in}}\right)  \right. \\
& \\
&  \left.  =\varepsilon^{2}U_{\varepsilon}\left(  \varepsilon^{-2}%
U_{\varepsilon}^{-1}u_{\varepsilon\kappa}^{\mathrm{in}}-\varepsilon
^{-1}S\!L_{\varepsilon\kappa}\Lambda_{\kappa}^{\omega}\left(  \varepsilon
\right)  \gamma_{0}\varepsilon^{-2}U_{\varepsilon}^{-1}u_{\kappa}%
^{\mathrm{in}}\right)  \right. \\
& \\
&  \left.  =u_{\varepsilon\kappa}^{\mathrm{in}}-\varepsilon^{-1}%
U_{\varepsilon}S\!L_{\varepsilon\kappa}\Lambda_{\kappa}^{\omega}\left(
\varepsilon\right)  \gamma_{0}U_{\varepsilon}^{-1}u_{\kappa}^{\mathrm{in}%
}\,.\right.
\end{align*}
Using the definition (\ref{nc}), this leads us to%
\[
u_{\kappa,\omega}^{\mathrm{sc}}(\varepsilon)=-\varepsilon^{-3/2}G_{\kappa
}(\varepsilon)\Lambda_{\kappa}^{\omega}(\varepsilon)\gamma_{0}U_{\varepsilon
}^{-1}u_{\kappa}^{\mathrm{in}}\,,
\]
and from%
\[
\gamma_{0}U_{\varepsilon}^{-1}u_{\kappa}^{\mathrm{in}}=\varepsilon
^{3/2}\,\gamma_{0}\left(  u_{\kappa}^{\mathrm{in}}\circ\Phi_{\varepsilon
}\right)  \,,
\]
the representation (\ref{scattering_sol}) follows. \hfill
\end{proof}

\begin{remark}
\label{acc} According to \eqref{gen-eig}, the solution $u_{\kappa
}^{\mathrm{in}}+u_{\kappa,\omega}^{\mathrm{sc}}(\varepsilon)$ in
\eqref{scattering_eq} can be equivalently defined as the unique generalized
eigenfunction of $H_{\omega}(\varepsilon)$ with eigenvalue $-\kappa^{2}$ such
that $u_{\kappa,\omega}^{\mathrm{sc}}\left(  \varepsilon\right)  $ satisfies
the (outgoing) Sommerfeld radiation condition.
\end{remark}

According to the above Remark, the next result is the analogous of Theorem
\ref{Th 1} as regards the behavior of generalized eigenfunctions of
$H_{\omega}(\varepsilon)$ whenever $\varepsilon\ll1$.

\begin{theorem}
\label{conv-gen}For any $\kappa>0$ and for any $\varepsilon>0$ sufficiently
small, let $u_{\kappa}^{\omega}( \varepsilon) :=u_{\kappa}^{\mathrm{in}%
}+u_{\kappa,\omega}^{\mathrm{sc}}( \varepsilon) $ be as in Theorem
\ref{Lemma_LAP}. Then, one has
\begin{align}
&  \left.  \omega\not =\omega_{M}\quad\Longrightarrow\quad\left\Vert
u_{\kappa}^{\omega}( \varepsilon) -u_{\kappa}^{\omega}\right\Vert
_{L_{-\alpha}^{2}(\mathbb{R}^{3})}\leq c\,\varepsilon^{3/2}\,,\right.
\label{scattering_exp_1}\\
& \nonumber\\
&  \left.  \omega=\omega_{M}\quad\Longrightarrow\quad\left\Vert u_{\kappa
}^{\omega}( \varepsilon) -\widehat{u}_{\kappa}\right\Vert _{L_{-\alpha}%
^{2}(\mathbb{R}^{3})}\leq c\,\varepsilon^{1/2}\,,\right.
\end{align}
where $\alpha>1/2$,
\begin{align}
&  \left.  u_{\kappa}^{\omega}(x):=u_{\kappa}^{\mathrm{in}}(x)+\varepsilon
\,\frac{c_{\Omega}\,\omega^{2}}{\omega_{M}^{2}-\omega^{2}}\ u_{\kappa
}^{\mathrm{in}}(y_{0})\,\mathcal{G}_{\kappa}(x-y_{0})\,,\right. \\
& \nonumber\\
&  \left.  \widehat{u}_{\kappa}(x):=u_{\kappa}^{\mathrm{in}}(x)+4\pi\,\frac
{i}{\kappa}\ u_{\kappa}^{\mathrm{in}}(y_{0})\,\mathcal{G}_{\kappa}%
(x-y_{0})\,,\right.  \label{scattering_exp_4}%
\end{align}
and the estimates hold uniformly with respect to the choice of $u_{\kappa
}^{\mathrm{in}}$ in any bounded subset of $H_{-\alpha}^{2}(\mathbb{R}^{3})$.
\end{theorem}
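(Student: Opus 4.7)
\textbf{Proof plan for Theorem \ref{conv-gen}.} The strategy is to feed the explicit representation \eqref{scattering_sol} of $u_{\kappa,\omega}^{\mathrm{sc}}(\varepsilon)$ through the asymptotic identities already derived for $G_z(\varepsilon)$ and $\Lambda_z^\omega(\varepsilon)$ in the proof of Theorem \ref{Th 1}. The only genuinely new ingredient is the expansion of the boundary datum $\gamma_0(u_\kappa^{\mathrm{in}}\circ\Phi_\varepsilon)$ as $\varepsilon\to 0_+$. Elliptic regularity applied to $(\Delta+\kappa^2)u_\kappa^{\mathrm{in}}=0$, together with $u_\kappa^{\mathrm{in}}\in H_{-\alpha}^2(\mathbb{R}^3)$, implies that $u_\kappa^{\mathrm{in}}$ is smooth and that its $C^2$ norm on any fixed compact neighbourhood of $y_0$ is controlled by $\|u_\kappa^{\mathrm{in}}\|_{H_{-\alpha}^2(\mathbb{R}^3)}$; a first-order Taylor expansion around $y_0$ then yields, in $H^{1/2}(\Gamma)$,
\begin{equation*}
\gamma_0\bigl(u_\kappa^{\mathrm{in}}\circ\Phi_\varepsilon\bigr)=u_\kappa^{\mathrm{in}}(y_0)\cdot 1 + O(\varepsilon),
\end{equation*}
with the remainder uniformly bounded on bounded subsets of $H_{-\alpha}^2(\mathbb{R}^3)$. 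The crucial feature is that the leading term lies in $\mathrm{ran}(P_0)$, i.e.\ it is annihilated by $Q_0$, so only the $P_0$-block of $\Lambda_\kappa^\omega(\varepsilon)$ contributes to the dominant part.

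Next I would plug this expansion, together with \eqref{Lambda_exp} and \eqref{G_z_exp}, into the formula $u_{\kappa,\omega}^{\mathrm{sc}}(\varepsilon)=-G_\kappa(\varepsilon)\Lambda_\kappa^\omega(\varepsilon)\gamma_0(u_\kappa^{\mathrm{in}}\circ\Phi_\varepsilon)$. The leading contribution is
\begin{equation*}
-\,u_\kappa^{\mathrm{in}}(y_0)\,\varepsilon^{\alpha_\omega}\omega^{2}\beta_{\omega,\kappa}\,G_\kappa S_0^{-1}P_0 K_{(2)}\,1.
\end{equation*}
Since $P_0\cdot 1=1$, Lemma \ref{Lemma_Schur_coeff} gives $\omega^2 P_0 K_{(2)}\,1=-(\omega^2/\omega_M^2)\,1$, and by the definition of $G_\kappa$ one has $G_\kappa S_0^{-1}\cdot 1=\langle 1,S_0^{-1}1\rangle\,\mathcal{G}_\kappa^{y_0}=c_\Omega\,\mathcal{G}_\kappa^{y_0}$ (see \eqref{cap}). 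The leading term therefore becomes $\varepsilon^{\alpha_\omega}(c_\Omega\omega^2/\omega_M^2)\beta_{\omega,\kappa}\,u_\kappa^{\mathrm{in}}(y_0)\,\mathcal{G}_\kappa^{y_0}$. Substituting the two explicit values of $\beta_{\omega,\kappa}$ from \eqref{ab}, together with $E_\omega^0=(\omega_M^2-\omega^2)/\omega_M^2$ in the off-resonant case and the cancellation $\omega_M^2\cdot 4\pi i/(c_\Omega\kappa\omega_M^2)=4\pi i/(c_\Omega\kappa)$ at $\omega=\omega_M$, reproduces exactly the coefficients appearing in \eqref{scattering_exp_4}.

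For the remainder, the estimates \eqref{Lambda_exp}–\eqref{G_z_exp} supply the operator-norm bounds $\|\Lambda_\kappa^\omega(\varepsilon)\gamma_0(\cdots)-(\text{leading})\|_{H^{-1/2}(\Gamma)}\leq c\,\varepsilon^{1+\alpha_\omega}\|u_\kappa^{\mathrm{in}}\|_{H_{-\alpha}^2}$ and $\|G_\kappa(\varepsilon)-G_\kappa\|_{H^{-1/2}(\Gamma),L_{-\alpha}^{2}(\mathbb{R}^3)}\leq c\,\varepsilon^{1/2}$; combining them produces, after telescoping, a total error of order $\varepsilon^{1/2+\alpha_\omega}$ in the $L_{-\alpha}^{2}(\mathbb{R}^3)$ norm, that is $\varepsilon^{3/2}$ when $\omega\neq\omega_M$ and $\varepsilon^{1/2}$ when $\omega=\omega_M$. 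Uniformity with respect to $u_\kappa^{\mathrm{in}}$ in bounded subsets of $H_{-\alpha}^2(\mathbb{R}^3)$ reduces, thanks to the linearity of the whole construction, to the uniformity of the boundary expansion above.

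The main obstacle, in my view, is a bookkeeping point rather than a conceptual one: one has to make sure that in the off-resonant case the cancellation $\omega^2 P_0 K_{(2)}P_0+P_0=E_\omega^0 P_0$ (which drives the gain of one power of $\varepsilon$ in the $P_0$-channel) does not leak through the $O(\varepsilon)$ Taylor remainder in $\gamma_0(u_\kappa^{\mathrm{in}}\circ\Phi_\varepsilon)$. This is where the factorisation of $\Lambda_\kappa^\omega(\varepsilon)$ through $S_0^{-1}P_0 K_{(2)}$, as written in \eqref{Lambda_exp}, is decisive: the subleading term $\varepsilon\,(y-y_0)\cdot\nabla u_\kappa^{\mathrm{in}}(y_0)$ is still mapped into $\mathrm{ran}(P_0)$ by $P_0 K_{(2)}$ and hence only contributes to the already admitted $O(\varepsilon^{1+\alpha_\omega})$ error after composition with $G_\kappa(\varepsilon)$. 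Once this is observed, the rest is a straightforward chain of mapping-property estimates identical to those appearing in \eqref{qq}.
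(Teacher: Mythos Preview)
Your proposal is essentially the paper's own argument: start from the representation \eqref{scattering_sol}, expand $\Lambda_\kappa^\omega(\varepsilon)$ via \eqref{Lambda_exp}, $G_\kappa(\varepsilon)$ via \eqref{G_z_exp}, evaluate $P_0K_{(2)}$ on the constant function through \eqref{qt}, and collect the error terms. The one place where you deviate is the boundary-datum expansion: you invoke interior elliptic regularity for Helmholtz solutions plus a first-order Taylor expansion to obtain $\gamma_0(u_\kappa^{\mathrm{in}}\circ\Phi_\varepsilon)=u_\kappa^{\mathrm{in}}(y_0)+O(\varepsilon)$, whereas the paper uses the Sobolev/H\"older estimate \eqref{sts} (valid for any $H^2_{-\alpha}$ function, not only Helmholtz solutions) and gets the weaker $O(\varepsilon^{1/2})$. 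Both are sufficient, since the bottleneck is the $O(\varepsilon^{1/2})$ coming from $G_\kappa(\varepsilon)-G_\kappa$; your sharper bound on the datum is simply absorbed.

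Your final paragraph worries about a non-issue. There is no cancellation to protect: once you use \eqref{Lambda_exp} as written, $\Lambda_\kappa^\omega(\varepsilon)$ has operator norm $O(\varepsilon^{\alpha_\omega})$ on all of $H^{1/2}(\Gamma)$, so applying it to the $O(\varepsilon)$ (or $O(\varepsilon^{1/2})$) remainder of the boundary datum immediately produces an $O(\varepsilon^{1+\alpha_\omega})$ (resp.\ $O(\varepsilon^{1/2+\alpha_\omega})$) term in $H^{-1/2}(\Gamma)$, with no need to analyse where that remainder sits relative to $\mathrm{ran}(P_0)$. The ``leakage'' you fear cannot occur because the gain of $\varepsilon^{\alpha_\omega}$ is already built into the full operator, not just its $P_0$-block.
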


\begin{proof}
By \eqref{sts} applied to $\psi_{\varepsilon}=u_{\kappa}^{\mathrm{in}}%
\circ\Phi_{\varepsilon}$ one gets%
\[
\Vert u_{\kappa}^{\mathrm{in}}\circ\Phi_{\varepsilon}-u_{\kappa}^{\mathrm{in}%
}(y_{0})\Vert_{H^{2}(B)}^{2}\leq c_{\alpha,B}\,\varepsilon^{1/2}\,\Vert
u_{\kappa}^{\mathrm{in}}\Vert_{H_{-\alpha}^{2}(\mathbb{R}^{3})}^{2}\,,
\]
where $\alpha>1/2$ and $B\subset\mathbb{R}^{3}$ is any star-shaped bounded
open set. Since $\gamma_{0}\in\mathcal{L}(H^{2}(B),H^{3/2}(\Gamma))$ and
$u_{\kappa}^{\mathrm{in}}\in H_{-\alpha}^{2}(\mathbb{R}^{3})$, it follows%
\begin{equation}
\Vert\gamma_{0}\left(  u_{\kappa}^{\mathrm{in}}\circ\Phi_{\varepsilon
}-u_{\kappa}^{\mathrm{in}}(y_{0})\right)  \Vert_{H^{1/2}(\Gamma)}\leq
c\,\varepsilon^{1/2}\Vert u_{\kappa}^{\mathrm{in}}\Vert_{H_{-\alpha}%
^{2}(\mathbb{R}^{3})}^{2}\,, \label{tut}%
\end{equation}
and hence%
\[
\gamma_{0}\left(  u_{\kappa}^{\mathrm{in}}\circ\Phi_{\varepsilon}\right)
=u_{\kappa}^{\mathrm{in}}(y_{0})+O(\varepsilon^{1/2})\,.
\]
We next proceed along the same lines as in the proof of Theorem \ref{Th 1}
(see Section \ref{res-conv}). By (\ref{ab}), (\ref{Lambda_exp}), (\ref{tut})
and (\ref{E_0}), it results%
\begin{align*}
&  \left.  \Lambda_{\kappa}^{\omega}(\varepsilon)\gamma_{0}\left(  u_{\kappa
}^{\mathrm{in}}\circ\Phi_{\varepsilon}\right)  =\left(  \varepsilon
^{\alpha_{\omega}}\omega^{2}\beta_{\omega,\kappa}S_{0}^{-1}P_{0}%
K_{(2)}+O\left(  \varepsilon^{1+\alpha_{\omega}}\right)  \right)  \left(
u_{\kappa}^{\mathrm{in}}(y_{0})+O(\varepsilon^{1/2})\right)  \right. \\
& \\
&  \left.  =\left\{
\begin{array}
[c]{ll}%
\left(  \varepsilon\frac{\omega^{2}\omega_{M}^{2}}{\omega^{2}-\omega^{2}}%
S_{0}^{-1}P_{0}K_{(2)}+O(\varepsilon^{2})\right)  \left(  u_{\kappa
}^{\mathrm{in}}(y_{0})+O(\varepsilon^{1/2})\right)  \,, & \omega\not =%
\omega_{M}\,,\\
& \\
\left(  \frac{\omega_{M}^{2}4\pi i}{c_{\Omega}\,\kappa}S_{0}^{-1}P_{0}%
K_{(2)}+O(\varepsilon)\right)  \left(  u_{\kappa}^{\mathrm{in}}(y_{0}%
)+O(\varepsilon^{1/2})\right)  \,, & \omega=\omega_{M}\,,
\end{array}
\right.  \right.
\end{align*}
and, by (\ref{qt}), we get%
\begin{equation}
\left.  \Lambda_{\kappa}^{\omega}(\varepsilon)\gamma_{0}\left(  u_{\kappa
}^{\mathrm{in}}\circ\Phi_{\varepsilon}\right)  =\left\{
\begin{array}
[c]{ll}%
-\varepsilon\frac{\omega^{2}}{\omega_{M}^{2}-\omega^{2}}\,u_{\kappa
}^{\mathrm{in}}(y_{0})S_{0}^{-1}\,(1)+O(\varepsilon^{3/2})\,, & \omega
\not =\omega_{M}\,,\\
& \\
-\frac{4\pi i}{c_{\Omega}\,\kappa}\,u_{\kappa}^{\mathrm{in}}(y_{0})S_{0}%
^{-1}(1)+O(\varepsilon^{1/2})\,, & \omega=\omega_{M}\,.
\end{array}
\right.  \right.  \label{Lambda_exp_id}%
\end{equation}
The expansion (\ref{G_z_exp}) implies%
\begin{equation}
G_{\kappa}(\varepsilon)S_{0}^{-1}\,(1)=G_{\kappa}S_{0}^{-1}\,(1)+O(\varepsilon
^{1/2})=\langle1,S_{0}^{-1}\,(1)\rangle_{H^{-1/2}(\Gamma),H^{1/2}(\Gamma
)}\,\mathcal{G}_{\kappa}^{y_{0}}+O(\varepsilon^{1/2})=c_{\Omega}%
\,\mathcal{G}_{\kappa}^{y_{0}}+O(\varepsilon^{1/2})\,. \label{G_k_id}%
\end{equation}
Finally, combining (\ref{scattering_sol}) with (\ref{Lambda_exp_id}) and
(\ref{G_k_id}), one gets (\ref{scattering_exp_1})-(\ref{scattering_exp_4}).
\hfill
\end{proof}

\begin{remark}
Suppose $u_{\kappa}^{\mathrm{in}}(x)$ is a plane wave with direction
$\hat{\theta}$ and frequency $\kappa$, i.e.: $u_{\kappa}^{\mathrm{in}%
}(x)=e^{i\omega\hat{\theta}\cdot x}$ . Then, in consistency with Theorem
\ref{Th 1}, $\widehat{u}_{\kappa}$ in \eqref{scattering_exp_4} is a
generalized eigenfunction with eigenvalue $-\kappa^{2}$ of the self-adjoint
operator $\Delta_{y_{0}}$ defined in \eqref{wd1} and \eqref{wd2} (see
\cite[equation 1.4.11]{Albeverio}, there $\alpha=0$ and $y=y_{0}$).
\end{remark}

\subsection{\label{Sec_Asymptotic}Proof of Theorem \ref{Th 2}}

We are now in the position to prove our results for the acoustic scattering
problem. As pointed out in Section \ref{Sec_Model}, the acoustic equation
\begin{equation}%
\begin{cases}
\left(  \nabla\cdot(1_{\mathbb{R}^{3}\backslash\Omega^{\varepsilon}%
}+\varepsilon^{-2}1_{\Omega^{\varepsilon}})\nabla+\omega^{2}\left(
1_{\mathbb{R}^{3}\backslash\Omega^{\varepsilon}}+\varepsilon^{-2}%
1_{\Omega^{\varepsilon}}\right)  \right)  u_{\omega}\left(  \varepsilon
\right)  =0\,,\\
\\
\gamma_{0}^{\mathrm{in}}(\varepsilon)u_{\omega}\left(  \varepsilon\right)
=\gamma_{0}^{\mathrm{ex}}(\varepsilon)u_{\omega}(\varepsilon)\,,\qquad
\varepsilon^{-2}\gamma_{1}^{\mathrm{in}}(\varepsilon)u_{\omega}\left(
\varepsilon\right)  =\gamma_{1}^{\mathrm{ex}}(\varepsilon)u_{\omega
}(\varepsilon)\,,
\end{cases}
\label{bvp}%
\end{equation}
is equivalent to the generalized eigenvalue problem for the operator
$H_{\omega}(\varepsilon)$ at energy $\omega^{2}$. By Theorem \ref{Lemma_LAP},
the corresponding scattering problem is well posed and the diffusion of an
incident wave $u_{\omega}^{\mathrm{in}}$ with frequency $\omega>0$ is
described by the outgoing radiating solution (\ref{scattering_sol}) and allows
the asymptotic expansions provided in Theorem \ref{conv-gen}. Hence, for
$u_{\omega}^{\mathrm{in}}\in H_{-\alpha}^{2}(\mathbb{R}^{3})$, $\alpha>1/2$, a
solution of the homogeneous Helmholtz equation%
\begin{equation}
\left(  \Delta+\omega^{2}\right)  u_{\omega}^{\mathrm{in}}=0\,,
\label{Helmholtz_eq_omega}%
\end{equation}
the boundary value problem (\ref{bvp}) admits an unique solution $u_{\omega
}(\varepsilon)\in H_{-\alpha}^{2}(\mathbb{R}^{3}\backslash\Gamma^{\varepsilon
})$, $\alpha>1/2$, such that $u_{\omega}^{\mathrm{sc}}(\varepsilon
):=u_{\omega}(\varepsilon)-u_{\omega}^{\mathrm{in}}$ satisfies the outgoing
Sommerfeld radiation condition. The scattered field represents as%
\begin{equation}
u_{\omega}^{\mathrm{sc}}(\varepsilon):=-G_{\omega}(\varepsilon)\Lambda
_{\omega}^{\omega}(\varepsilon)\,\gamma_{0}\left(  u_{\omega}^{\mathrm{in}%
}\circ\Phi_{\varepsilon}\right)  \,, \label{ac_scattering_sol}%
\end{equation}
i.e., $u_{\omega}(\varepsilon)$ is a generalized eigenfunction of $H_{\omega
}(\varepsilon)$ with eigenvalue $-\omega^{2}$. Moreover, for any
$\varepsilon>0$ sufficiently small, the expansions (\ref{non_res_scattering}%
)-(\ref{res_scattering}) follow from (\ref{scattering_exp_1}%
)-(\ref{scattering_exp_4}).\hfill

\subsection{Quasi-resonant asymptotic scattering solutions}

The estimates in the expansions provided in the Theorem \ref{Th 2} (see also
Theorem \ref{conv-gen}) are frequency-dependent and so they are useless as
regards an accurate descriptions of the transitions between the two different
asymptotic scattering regimes as the frequency $\omega$ approaches the
Minnaert one $\omega_{M}$. In this section we provide more refined estimates
which are uniform with respect to the frequency $\omega$. Their proof relies
on $\omega$-uniform estimates on the $\varepsilon$-expansion of the operator
$\Lambda_{\omega}^{\omega}(\varepsilon)$ which we provide at first.

\begin{theorem}
\label{teo-unif}Let $c_{M}>0$, $\mathcal{I}_{M}\subset\mathbb{R}_{+}$ be a
bounded interval containing $\omega_{M}$ and $E_{\omega}^{0}$, $E_{\omega}%
^{1}$ be given by \eqref{E_0}, \eqref{E_1}. For $\varepsilon>0$ sufficiently
small the expansion%
\begin{equation}
\Lambda_{\omega}^{\omega}(\varepsilon)=\frac{1}{\varepsilon}\,S_{\varepsilon
\omega}^{-1}\left(  \frac{1}{E_{\omega}^{0}}\left(  \frac{E_{\omega}^{0}%
}{E_{\omega}^{0}+E_{\omega}^{1}\,\varepsilon}\,P_{0}+P_{0}O(\varepsilon
)P_{0}\right)  +O(\varepsilon^{2})\right)  S_{\varepsilon\omega}%
D\!N_{\varepsilon\omega}\,, \label{ool}%
\end{equation}
holds uniformly w.r.t. $\omega$ in $\left\{  \omega\in\mathcal{I}%
_{M}:\left\vert \omega-\omega_{M}\right\vert \geq c_{M}\,\varepsilon\right\}
$, i.e.,%
\begin{equation}
\sup_{\omega\in\mathcal{I}_{M}:\left\vert \omega-\omega_{M}\right\vert \geq
c_{M}\,\varepsilon}\Vert O(\varepsilon^{j})\Vert_{H^{1/2}(\Gamma
),H^{1/2}(\Gamma)}\leq C_{M}\varepsilon^{j}\,,
\end{equation}
with $C_{M}$ depending only on $c_{M}$.
\end{theorem}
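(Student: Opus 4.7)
The plan is to revisit the block-matrix analysis of Theorem \ref{Proposition_Schur} with the spectral parameter specialized to $z=\omega$, and then to track every constant uniformly in $\omega$ by exploiting the lower bound on $|E_\omega^0+E_\omega^1\varepsilon|$ that the hypothesis $|\omega-\omega_M|\geq c_M\varepsilon$ delivers. The specialization $z=\omega$ drastically simplifies \eqref{Lambda_id}: the factor $S_{\varepsilon z}S_{\varepsilon\omega}^{-1}$ becomes the identity, the $(z-\omega)$ contribution to the $\varepsilon^3$-coefficient of $M_{00}$ in Theorem \ref{Lemma_Schur} vanishes, and Lemmata \ref{Lemma_Schur_coeff} and \ref{k3} collapse the $(1,1)$ block to
\begin{equation*}
M_{00}(\varepsilon)=\varepsilon^2\bigl(E_\omega^0+E_\omega^1\,\varepsilon\bigr)P_0+P_0\,O(\varepsilon^4)\,P_0
\end{equation*}
uniformly for $\omega\in\mathcal I_M$. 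Since the Schur correction $M_{01}M_{11}^{-1}M_{10}$ is $O(\varepsilon^4)$ (with $M_{11}^{-1}$ uniformly bounded because $-1/2\in\varrho(Q_0K_0Q_0)$), the Schur complement keeps the same leading form: $C_{00}(\varepsilon)=\varepsilon^2\,\tilde E\,P_0+P_0\,O(\varepsilon^4)\,P_0$ with $\tilde E:=E_\omega^0+E_\omega^1\varepsilon$.

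The decisive algebraic ingredient is the uniform lower bound on $|\tilde E|$. Because $E_\omega^0=1-\omega^2/\omega_M^2$ is real while $E_\omega^1=-i\omega^3|\Omega|/(4\pi)$ is purely imaginary, one has
\begin{equation*}
|\tilde E|^2=(E_\omega^0)^2+\bigl(\tfrac{\omega^3|\Omega|}{4\pi}\bigr)^2\varepsilon^2\,,
\end{equation*}
so that on the bounded interval $\mathcal I_M$ the imaginary contribution alone yields $|\tilde E|\geq c\,\varepsilon$; combined with $|\omega-\omega_M|\geq c_M\varepsilon$ and the factorization $E_\omega^0=(\omega_M-\omega)(\omega_M+\omega)/\omega_M^2$, this strengthens to $|\tilde E|\geq c\,\max(|\omega-\omega_M|,\varepsilon)$ with constants depending only on $c_M$ and $\mathcal I_M$, and in particular also to the useful bound $|E_\omega^0|\geq c\,\varepsilon$.

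With these bounds in hand, the perturbation-to-leading ratio inside $C_{00}(\varepsilon)$ is $O(\varepsilon^4)/(\varepsilon^2|\tilde E|)=O(\varepsilon)$ uniformly in $\omega$, so Lemma \ref{str} applies and a single Neumann-series iteration yields
\begin{equation*}
\varepsilon^2 C_{00}(\varepsilon)^{-1}=\tfrac{1}{\tilde E}\,P_0+\tfrac{\varepsilon^2}{\tilde E^{\,2}}\,P_0\,O(1)\,P_0\,.
\end{equation*}
Factoring $\tfrac{1}{\tilde E}=\tfrac{1}{E_\omega^0}\cdot\tfrac{E_\omega^0}{\tilde E}$ and exploiting $|E_\omega^0/\tilde E|\leq 1$ together with $\varepsilon^2/|\tilde E|\leq c\varepsilon$ rewrites the second term as $(1/E_\omega^0)\,P_0\,O(\varepsilon)\,P_0$, which is exactly the $(1,1)$ entry of \eqref{ool}. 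The remaining entries of $\varepsilon^2\mathbb M(\varepsilon)^{-1}$ are read off from the block-inversion formula \eqref{bbM}: the off-diagonal blocks reduce to $\varepsilon^2 C_{00}^{-1}M_{01}M_{11}^{-1}$ which, taken together with the subsequent composition with $S_{\varepsilon\omega}D\!N_{\varepsilon\omega}$ and the $P_0$-killing property $\widetilde K_0 P_0=0$ from Lemma \ref{Lemma_S_DN}, deliver the remainder $O(\varepsilon^2)$ in \eqref{ool}; the $(2,2)$ block $\varepsilon^2 M_{11}^{-1}\bigl(1+O(\varepsilon^2)\bigr)$ is already $O(\varepsilon^2)$. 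Conjugating by $\varepsilon^{-1}S_{\varepsilon\omega}^{-1}(\,\cdot\,)S_{\varepsilon\omega}D\!N_{\varepsilon\omega}$ via \eqref{Lambda_id} concludes the proof.

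The main obstacle is the uniformity bookkeeping in the degenerate window $|\omega-\omega_M|\sim\varepsilon$, where $E_\omega^0$ itself can be of size $\varepsilon$: every remainder $O(\cdot)$ inherited from Theorem \ref{Lemma_Schur} and every Neumann-series iteration must be controlled by a constant depending only on $c_M$ and $\mathcal I_M$. This forces the lower bound $|\tilde E|\gtrsim\varepsilon$ to be used optimally---any loss of a power of $\varepsilon$ at any step would destroy the stated orders $O(\varepsilon)$ and $O(\varepsilon^2)$ in \eqref{ool}.
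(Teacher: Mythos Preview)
Your proposal is correct and follows essentially the same route as the paper: specialize $z=\omega$ in \eqref{Lambda_id} so that $S_{\varepsilon z}S_{\varepsilon\omega}^{-1}=1$, pass to the block matrix $\mathbb M(\varepsilon)$, compute the Schur complement, and invert via a Neumann series whose uniformity is guaranteed by $\varepsilon/|E_\omega^0|=O_u(1)$. The only cosmetic difference is that you factor $C_{00}$ around the combined scalar $\tilde E=E_\omega^0+E_\omega^1\varepsilon$ and then refactor, whereas the paper pulls out $E_\omega^0$ first and treats $1+E_\omega^1\varepsilon/E_\omega^0$ as the invertible leading part; both lead to the same $C_{00}^{-1}$ and the same block form of $\varepsilon^2\mathbb M(\varepsilon)^{-1}$.
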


\begin{proof}
From \eqref{Lambda_id} follows
\[
\Lambda_{\omega}^{\omega}(\varepsilon)=\varepsilon(1-\varepsilon
^{2})S_{\varepsilon\omega}^{-1}\left(  \varepsilon^{2}+\left(  1-\varepsilon
^{2}\right)  \left(  \frac{1}{2}+K_{\varepsilon\omega}\right)  \right)
^{\!-1}S_{\varepsilon\omega}D\!N_{\varepsilon\omega}\,.
\]
Thus, by \eqref{K_exp}, $\mathbb{M}(\varepsilon)$ in \eqref{K_op} has the
following components:
\begin{align*}
&  M_{00}(\varepsilon)=P_{0}\left(  E_{\omega}^{0}\varepsilon^{2}+E_{\omega
}^{1}\varepsilon^{3}+\varepsilon^{2}O((\varepsilon\omega)^{2})\right)
P_{0}\,,\\
& \\
&  M_{01}(\varepsilon)=P_{0}{O}((\varepsilon\omega)^{2})Q_{0}\,,\qquad
M_{10}(\varepsilon)=Q_{0}{O}((\varepsilon\omega)^{2})P_{0}\,,\\
& \\
&  M_{11}(\varepsilon)=Q_{0}\left(  1/2+K_{0}+\varepsilon^{2}O((\varepsilon
\omega)^{0})\right)  Q_{0}\,.
\end{align*}
The requirement $|\omega-\omega_{M}|\geq c_{M}\,\varepsilon$ is equivalent to
$\varepsilon/E_{\omega}^{0}=O_{u}(1)$, where $O_{u}\left(  \varepsilon
^{j}\right)  $ means that the corresponding estimate is uniform with respect
to $\omega$. It follows%
\begin{align*}
M_{00}(\varepsilon)=  &  E_{\omega}^{0}\varepsilon^{2}P_{0}\left(
1+E_{\omega}^{1}\,\varepsilon/E_{\omega}^{0}+O_{u}(\varepsilon)\varepsilon
/E_{\omega}^{0}\right)  P_{0}\\
=  &  E_{\omega}^{0}\varepsilon^{2}P_{0}\left(  1+E_{\omega}^{1}%
\,\varepsilon/E_{\omega}^{0}+O_{u}(\varepsilon)\right)  P_{0}\,,\\
& \\
M_{01}(\varepsilon)=  &  P_{0}{O_{u}}(\varepsilon^{2})Q_{0}\,,\qquad
M_{10}(\varepsilon)=Q_{0}{O_{u}}(\varepsilon^{2})P_{0}\,,\\
& \\
M_{11}(\varepsilon)=  &  Q_{0}\left(  1/2+K_{0}+O_{u}(\varepsilon^{2})\right)
Q_{0}\,.
\end{align*}
Then,
\[
C_{00}(\varepsilon)=E_{\omega}^{0}\varepsilon^{2}P_{0}\left(  1+E_{\omega}%
^{1}\,\varepsilon/E_{\omega}^{0}+O_{u}(\varepsilon)\right)  P_{0}%
\]
and, by Lemma \ref{str},
\begin{equation}
C_{00}(\varepsilon)^{-1}=\frac{1}{\varepsilon^{2}}\,\frac{1}{E_{\omega}^{0}%
}\,\left(  \left(  1+E_{\omega}^{1}\,\varepsilon/E_{\omega}^{0}\right)
^{-1}\,P_{0}+P_{0}O_{u}(\varepsilon)P_{0}\right)  \,. \label{C00-1bis}%
\end{equation}
Notice that $1+E_{\omega}^{1}\,\varepsilon/E_{\omega}^{0}\not =0$ since
$E_{\omega}^{1}\in i\mathbb{R}$. Then, by \eqref{C00-1bis}, one gets, as in
the proof of point (1) in Theorem \ref{Proposition_Schur},
\[
\varepsilon^{2}{\mathbb{M}}(\varepsilon)^{-1}=%
\begin{bmatrix}
(E_{\omega}^{0})^{-1}\,\left(  \left(  1+E_{\omega}^{1}\,\varepsilon
/E_{\omega}^{0}\right)  ^{-1}\,P_{0}+P_{0}O_{u}(\varepsilon)P_{0}\right)  &
P_{0}O(\varepsilon^{2})Q_{0}\\
Q_{0}O(\varepsilon^{2})P_{0} & Q_{0}O(\varepsilon^{2})Q_{0}%
\end{bmatrix}
\,.
\]
This entails
\[
\varepsilon^{2}\left(  \varepsilon^{2}+\left(  1-\varepsilon^{2}\right)
\left(  1/2+K_{\varepsilon\omega}\right)  \right)  ^{-1}=\frac{1}{E_{\omega
}^{0}}\,\left(  \left(  1+E_{\omega}^{1}\,\varepsilon/E_{\omega}^{0}\right)
^{-1}\,P_{0}+P_{0}O_{u}(\varepsilon)P_{0}\right)  +{O_{u}}(\varepsilon
^{2})\,.
\]
The proof is then concluded by proceeding as in Theorem \ref{Theorem_Schur}.
\end{proof}

\subsection{\label{Sec_Asymptotic_1}Proof of Theorem \ref{Th 3}\hfill}

The result of Theorem \ref{teo-unif} allows to improve our analysis of the
asymptotic acoustic scattering including the quasi-resonant regime
$|\,\omega-\omega_{M}|\gtrsim\varepsilon$. Let $c_{M}>0$, $\mathcal{I}%
_{M}\subset\mathbb{R}_{+}$ be a bounded interval containing $\omega_{M}$. For
$u_{\omega}^{\mathrm{in}}\in H_{-\alpha}^{2}(\mathbb{R}^{3})$, $\alpha>1/2$, a
solution of the homogeneous Helmholtz equation (\ref{Helmholtz_eq_omega}), we
denote as: $u_{\omega}\left(  \varepsilon\right)  =u_{\omega}^{\mathrm{sc}%
}(\varepsilon)+u_{\omega}^{\mathrm{in}}$ the unique solution of the problem
(\ref{bvp}) and proceed as in the proofs of Theorem \ref{conv-gen}.\par Since, by
Lemma \ref{anS}, $S_{\varepsilon\omega}^{-1}=S_{0}^{-1}+O(\varepsilon\omega)$
and, by Lemma \ref{Lemma_S_DN}, $S_{\varepsilon\omega}D\!N_{\varepsilon\omega
}=Q_{0}(1/2+K_{0})Q_{0}+(\varepsilon\omega)^{2}K_{(2)}+O((\varepsilon
\omega)^{3})$, one has
\[
S_{\varepsilon\omega}^{-1}=S_{0}^{-1}+O_{u}(\varepsilon)
\]
and
\[
S_{\varepsilon\omega}D\!N_{\varepsilon\omega}=Q_{0}(1/2+K_{0})Q_{0}%
+(\varepsilon\omega)^{2}K_{(2)}+O_{u}(\varepsilon^{3})\,,
\]
where $O_{u}(\varepsilon^{\lambda})$ means that the corresponding estimate
holds -- in the appropriate norm -- uniformly with respect to $\omega$ in
$\left\{  \omega\in\mathcal{I}_{M}:\left\vert \omega-\omega_{M}\right\vert
\geq c_{M}\,\varepsilon\right\}  $. Since $\varepsilon/E_{\omega}^{0}%
=O_{u}(1)$, combining such relations with \eqref{ool}, one gets
\begin{equation}
\Lambda_{\omega}^{\omega}(\varepsilon)=\frac{\varepsilon}{E_{\omega}^{0}}%
\frac{E_{\omega}^{0}}{E_{\omega}^{0}+E_{\omega}^{1}\,\varepsilon}\ \omega
^{2}S_{0}^{-1}P_{0}K_{(2)}+P_{0}O_{u}(\varepsilon)P_{0}+O_{u}(\varepsilon
^{2})\,. \label{Lambda_exp_1}%
\end{equation}
From (\ref{qt}), (\ref{tut}) and (\ref{Lambda_exp_1}) there follows%
\begin{align}
&  \left.  \Lambda_{\omega}^{\omega}(\varepsilon)\gamma_{0}\left(  u_{\omega
}^{\mathrm{in}}\circ\Phi_{\varepsilon}\right)  =\right. \nonumber\\
& \nonumber\\
&  \left.  =\left(  \frac{\varepsilon}{E_{\omega}^{0}}\frac{E_{\omega}^{0}%
}{E_{\omega}^{0}+E_{\omega}^{1}\,\varepsilon}\ \omega^{2}S_{0}^{-1}%
P_{0}K_{(2)}+P_{0}O_{u}(\varepsilon)P_{0}+O_{u}(\varepsilon^{2})\right)
\left(  u_{\omega}^{\mathrm{in}}(y_{0})+\left\Vert u_{\omega}^{\mathrm{in}%
}\right\Vert _{H_{-\alpha}^{2}(\mathbb{R}^{3})}^{2}O_{u}(\varepsilon
^{1/2})\right)  \right. \nonumber\\
& \nonumber\\
&  \left.  =-\frac{\varepsilon}{E_{\omega}^{0}+E_{\omega}^{1}\,\varepsilon
}\left(  u_{\omega}^{\mathrm{in}}(y_{0})\frac{\omega^{2}}{\omega_{M}^{2}%
}\ S_{0}^{-1}(1)+\left\Vert u_{\omega}^{\mathrm{in}}\right\Vert _{H_{-\alpha
}^{2}(\mathbb{R}^{3})}^{2}\,O_{u}(\varepsilon^{1/2})\right)  +\,u_{\omega
}^{\mathrm{in}}\left(  y_{0}\right)  O_{u}(\varepsilon)\right. \nonumber\\
&  \left.  +O_{u}(\varepsilon^{2})\left(  u_{\omega}^{\mathrm{in}}\left(
y_{0}\right)  +\Vert u_{\omega}^{\mathrm{in}}\Vert_{H_{-\alpha}^{2}%
(\mathbb{R}^{3})}^{2}\,O(\varepsilon^{1/2})\right)  \,.\right. \nonumber
\end{align}
By the definitions of $E_{\omega}^{0}$, $E_{\omega}^{1}$ and $\omega_{M}^{2}$,
results
\[
\frac{\varepsilon}{E_{\omega}^{0}+E_{\omega}^{1}\,\varepsilon}=\frac
{\varepsilon\omega_{M}^{2}}{\omega_{M}^{2}-\omega^{2}}\left(  1-i\,\frac
{\omega^{3}\varepsilon}{\omega_{M}^{2}-\omega^{2}}\frac{c_{\Omega}}{4\pi
}\right)  ^{-1}=O_{u}(1)\,,
\]
and the r.h.s. simplifies to
\begin{align}
&  \left.  \Lambda_{\omega}^{\omega}(\varepsilon)\gamma_{0}\left(  u_{\omega
}^{\mathrm{in}}\circ\Phi_{\varepsilon}\right)  =\right. \nonumber\\
& \nonumber\\
&  \left.  =-\frac{\varepsilon\omega^{2}}{\omega_{M}^{2}-\omega^{2}}\left(
1-i\,\frac{\omega^{3}\varepsilon}{\omega_{M}^{2}-\omega^{2}}\frac{c_{\Omega}%
}{4\pi}\right)  ^{-1}u_{\omega}^{\mathrm{in}}(y_{0})S_{0}^{-1}(1)+\left\Vert
u_{\omega}^{\mathrm{in}}\right\Vert _{H_{-\alpha}^{2}(\mathbb{R}^{3})}%
^{2}\,O_{u}(\varepsilon^{1/2})\,.\right.  \label{Lambda_exp_unif}%
\end{align}
Since $\mathcal{I}_{M}\ni\omega\mapsto\Vert R_{\omega}\Vert_{L_{\alpha}%
^{2}(\mathbb{R}^{3}),H_{-\alpha}^{2}(\mathbb{R}^{3})}$ is continuous (see
Subsection \ref{Section_LAP}), by Corollary \ref{Lemma_trace_R_eps_k_U_eps},
one gets
\[
G_{\omega}(\varepsilon)=G_{\omega}+O_{u}(\varepsilon^{1/2})\,.
\]
Therefore, from
\[
u_{\omega}^{\mathrm{sc}}(\varepsilon)=-G_{\omega}(\varepsilon)\Lambda_{\omega
}^{\omega}(\varepsilon)\gamma_{0}\left(  u_{\kappa}^{\mathrm{in}}\circ
\Phi_{\varepsilon}\right)  \,,
\]
combining the above expansions, we obtain
\[
u_{\omega}^{\mathrm{sc}}(\varepsilon)=\left(  G_{\omega}+O_{u}(\varepsilon
^{1/2})\right)  \left(  \frac{\varepsilon\omega^{2}}{\omega_{M}^{2}-\omega
^{2}}\left(  1-i\,\frac{\omega^{3}\varepsilon}{\omega_{M}^{2}-\omega^{2}}%
\frac{c_{\Omega}}{4\pi}\right)  ^{-1}u_{\omega}^{\mathrm{in}}\left(
y_{0}\right)  S_{0}^{-1}(1)+\left\Vert u_{\omega}^{\mathrm{in}}\right\Vert
_{H_{-\alpha}^{2}(\mathbb{R}^{3})}^{2}\,O_{u}(\varepsilon^{1/2})\right)  \,.
\]
The definition of $G_{\omega}$ and in particular: $G_{\omega}S_{0}%
^{-1}\,(1)=c_{\Omega}\,\mathcal{G}_{\omega}\left(  \cdot-y_{0}\right)
=c_{\Omega}\,\mathcal{G}_{\omega}^{y_{0}}$ (see (\ref{G_k_id})), leads to the
expansion%
\[
u_{\omega}^{\mathrm{sc}}(\varepsilon)=\frac{\varepsilon\omega^{2}}{\omega
_{M}^{2}-\omega^{2}}\left(  1-i\,\frac{\omega^{3}\varepsilon}{\omega_{M}%
^{2}-\omega^{2}}\frac{c_{\Omega}}{4\pi}\right)  ^{-1}u_{\omega}^{\mathrm{in}%
}(y_{0})c_{\Omega}\,\mathcal{G}_{\omega}^{y_{0}}+\left\Vert u_{\omega
}^{\mathrm{in}}\right\Vert _{H_{-\alpha}^{2}(\mathbb{R}^{3})}^{2}%
\,O_{u}(\varepsilon^{1/2})\,,
\]
which corresponds to our statement after noticing that%
\[
\frac{\varepsilon\omega^{2}}{\omega_{M}^{2}-\omega^{2}}\left(  1-i\,\frac
{\omega^{3}\varepsilon}{\omega_{M}^{2}-\omega^{2}}\frac{c_{\Omega}}{4\pi
}\right)  ^{-1}=\,\frac{\varepsilon\,\omega^{2}}{\omega_{M}^{2}-\omega
^{2}-i\varepsilon\frac{\omega^{3}c_{\Omega}}{4\pi}}\,,
\]
and $\varepsilon/\left(  \omega_{M}^{2}-\omega^{2}\right)  =O_{u}(1)$.\hfill

\appendix\hfill

\section{Resolvent analysis, boundary integral operators and operator
expansions}

\subsection{(Weighted) Sobolev spaces}

Given $\Omega\subset\mathbb{R}^{3}$ open and bounded, with smooth boundary
$\Gamma$, we adopt the notation
\[
\Omega_{\mathrm{in}}=\Omega\,,\qquad\Omega_{\mathrm{ex}}=\mathbb{R}%
^{3}\backslash\overline{\Omega}\,.
\]
The symbols $H^{s}(\mathbb{R}^{3})$, $H^{s}(\Omega_{\mathrm{in}})$,
$H^{s}(\Omega_{\mathrm{ex}})$, $H^{s}(\Gamma)$, $s\in\mathbb{R}$, denote the
usual scales of Sobolev-Hilbert spaces of function on $\mathbb{R}^{3}$,
$\Omega_{\mathrm{in}}$, $\Omega_{\mathrm{ex}}$ and $\Gamma$ respectively (see,
e.g., \cite{McLe}). We use the notation
\[
H^{s}(\mathbb{R}^{3}\backslash\Gamma):=H^{s}(\Omega_{\mathrm{in}})\oplus
H^{s}(\Omega_{\mathrm{ex}})\,.
\]
Let $\left\langle x\right\rangle :=(1+|x|^{2})^{1/2}$ and $\alpha\in
\mathbb{R}$. Then we define the weighted $L^{2}$-space by
\begin{equation}
L_{\alpha}^{2}(\mathbb{R}^{3}):=\bigl\{u\in L_{\mathrm{loc}}^{2}%
(\mathbb{R}^{3}):\Vert u\Vert_{L_{\alpha}^{2}(\mathbb{R}^{3})}<+\infty
\bigr\}\,,\quad\Vert u\Vert_{L_{\alpha}^{2}(\mathbb{R}^{3})}:=\Vert
\left\langle x\right\rangle ^{\alpha}u\Vert_{L^{2}(\mathbb{R}^{3})}\,.
\label{W_Sob_0}%
\end{equation}
The weighted Sobolev spaces of positive integer order $\ell$ are defined by
\begin{equation}
H_{\alpha}^{\ell}(\mathbb{R}^{3})=\bigl\{u\in L_{\alpha}^{2}(\mathbb{R}%
^{3}):\Vert u\Vert_{H_{\alpha}^{\ell}(\mathbb{R}^{3})}<+\infty\bigr\},\quad
\Vert u\Vert_{H_{\alpha}^{\ell}(\mathbb{R}^{3})}^{2}:=\sum_{|k|\leq\ell}\Vert
D^{k}u\Vert_{L_{\alpha}^{2}(\mathbb{R}^{3})}^{2}. \label{W_Sob}%
\end{equation}
If $s>0$ is not integer, $H_{\alpha}^{s}(\mathbb{R}^{3})$ is defined via
interpolation and for $s<0$ we define $H_{\alpha}^{s}(\mathbb{R}^{3})$ as the
dual of $H_{-\alpha}^{-s}(\mathbb{R}^{3}))$.

The spaces $L_{\alpha}^{2}(\Omega_{\mathrm{ex}})$ and $H_{\alpha}^{s}%
(\Omega_{\mathrm{ex}})$ are defined in a similar way. One has
\[
L_{\alpha}^{2}(\mathbb{R}^{3})=L^{2}(\Omega_{\mathrm{in}})\oplus L_{\alpha
}^{2}(\Omega_{\mathrm{ex}})
\]
and we set%
\[
H_{\alpha}^{s}(\mathbb{R}^{3}\backslash\Gamma):=H^{s}(\Omega_{\mathrm{in}%
})\oplus H_{\alpha}^{s}(\Omega_{\mathrm{ex}})\,.
\]

\subsection{\label{Section_LAP}The free resolvent}

Let $\Delta$ be the distributional Laplacian; whenever restricted to
$H^{2}(\mathbb{R}^{3})$, it is a self-adjoint operator in $L^{2}%
(\mathbb{R}^{3})$ and its resolvent
\begin{equation}
R_{z}:=\left(  -\Delta-z^{2}\right)  ^{-1}\,,\qquad z\in\mathbb{C}_{+}\,,
\label{res}%
\end{equation}
provides a map $R_{z}\in${$\mathcal{L}$}$(H^{s}(\mathbb{R}^{3}),H^{s+2}%
(\mathbb{R}^{3}))$ for any $s\geq0$. For any $u\in L^{1}(\mathbb{R}^{3})\cap
L^{2}(\mathbb{R}^{3})$ and $z\in\mathbb{C}_{+}$ one has the integral
representation
\begin{equation}
R_{z}u(x)=\int_{\mathbb{R}^{3}}\mathcal{G}_{z}(x-y)u(y)\,dy\,,\qquad
\mathcal{G}_{z}(x):=\frac{e^{iz|x|}}{4\pi\,|x|}\,. \label{Res_kernel}%
\end{equation}
$R_{z}$ in \eqref{res} extends to a map $R_{z}\in${$\mathcal{L}$}%
$(H^{s}(\mathbb{R}^{3}),H^{s+2}(\mathbb{R}^{3}))$ for any real $s$; moreover,
$\mathbb{C}_{+}\ni z\mapsto R_{z}$ is a {$\mathcal{L}$}$(H^{s}(\mathbb{R}%
^{3}),H^{s+2}(\mathbb{R}^{3}))\,$-valued continuous map for any real $s$. By
the resolvent identity
\[
R_{z}-R_{w}=(z^{2}-w^{2})R_{w}R_{z}\,,
\]
the latter entails that $\mathbb{C}_{+}\ni z\mapsto R_{z}$ is a {$\mathcal{L}%
$}$(H^{s}(\mathbb{R}^{3}),H^{s+2}(\mathbb{R}^{3}))\,$-valued analytic map for
any real $s$.
%By the identity
%\be
%R_{z}=R_{z_{0}}\left(  1 - \left(  z^{2}-z^{2}_{0}\right)R_{z_{0}}  \right)^{-1}=R_{z_{0}}
%\left(\,\sum_{n=0}^{+\infty}\left(  z^{2}-z^{2}_{0}\right)^{n}R_{z_{0}}^{n}\right)\,,
%\ee
%which holds for any fixed $z_{0}\in\CO_{+}$ and for any $z\in\CO_{+}$ such that
%\be
%|z^{2}-z^{2}_{0}|\,\|R_{z_{0}}\|_{H^{s}(\RE^{3}),H^{s}(\RE^{3})}<1\,,
%\ee
%one has that $\CO_{+}\ni z\mapsto R_{z}$ is  a ${\B}(H^{s}(\RE^{3}),H^{s+2}(\RE^{3}))\,$-valued analytic  map for any real $s$.

By the Limiting Absorption Principle (see, e.g., \cite[Theorem 18.3]{KoKo}),
$\mathbb{C}_{+}\ni z\mapsto R_{z}$ extends to a map $\overline{\mathbb{C}_{+}%
}\ni z\mapsto R_{z}$ defined as
\begin{equation}
z\mapsto%
\begin{cases}
(-\Delta-z^{2})^{-1}\,, & z\in\mathbb{C}_{+}\\
\lim_{\,\delta\rightarrow0_{+}}\left(  -\Delta-(\kappa+i\delta)^{2}\right)
^{-1}\,, & z=\kappa\in\mathbb{R}\,.
\end{cases}
\label{C+}%
\end{equation}
The above limit exists in $\mathcal{L}(H_{\alpha}^{-s}(\mathbb{R}%
^{3}),H_{-\alpha}^{-s+2}(\mathbb{R}^{3}))$ for any $s\in\lbrack0,2]$, where
$\alpha>1/2$ whenever $\kappa\not =0$, or $\alpha>1$ if $\kappa=0$; moreover,
$\overline{\mathbb{C}_{+}}\backslash\{0\}\ni z\mapsto R_{z}$ is continuous as
a {$\mathcal{L}$}$(H_{\alpha}^{-s}(\mathbb{R}^{3}),H_{-\alpha}^{-s+2}%
(\mathbb{R}^{3}))\,$-valued map for any $\alpha>1/2$ and $\overline
{\mathbb{C}_{+}}\ni z\mapsto R_{z}$ is continuous as a {$\mathcal{L}$%
}$(H_{\alpha}^{-s}(\mathbb{R}^{3}),H_{-\alpha}^{-s+2}(\mathbb{R}^{3}%
))\,$-valued map for any $\alpha>1$.

We extend $z\mapsto R_{z}$ in \eqref{C+} to the whole $\mathbb{C}$ by
\begin{equation}
R_{z}u:=\mathcal{G}_{z}\ast u\,,\qquad z\in\mathbb{C}\,. \label{R*}%
\end{equation}
The two definitions \eqref{C+} and \eqref{R*} agree when $z\in\overline
{\mathbb{C}_{+}}$, while the integral representation \eqref{Res_kernel} still
holds for $z\in\overline{\mathbb{C}_{-}}$ and $u\in\mathcal{D}(\mathbb{R}%
^{3})\equiv\mathcal{C}_{\mathrm{comp}}^{\infty}(\mathbb{R}^{3})$. Since
$\mathcal{G}_{z}\in L_{\mathrm{loc}}^{1}(\mathbb{R}^{3})\subset\mathcal{D}%
^{\prime}(\mathbb{R}^{3})$, $R_{z}$ in \eqref{R*} belongs to $\mathcal{L}%
(${$\mathcal{E}$}$^{\prime}(\mathbb{R}^{3}),${$\mathcal{D}$}$^{\prime
}(\mathbb{R}^{3}))$ (see, e.g., \cite[Theorem 27.6]{Trev}). Since the series
\begin{equation}
\mathcal{G}_{z}=\mathcal{G}_{0}+\sum_{n=1}^{+\infty}\mathcal{G}_{(n)}%
\,z^{n}\,,\qquad\mathcal{G}_{(n)}(x):=\frac{1}{4\pi}\,\frac{i^{n}}%
{n!}\ |x|^{n-1} \label{gexp}%
\end{equation}
converges in {$\mathcal{D}$}$^{\prime}(\mathbb{R}^{3})$ and the map $f\mapsto
f\ast u$ belongs to $\mathcal{L}(${$\mathcal{D}$}$^{\prime}(\mathbb{R}^{3}))$
for any $u\in\mathcal{E}^{\prime}(\mathbb{R}^{3})$ (see, e.g., \cite[Theorem
27.6]{Trev}), one has
\begin{equation}
R_{z}=R_{0}+\sum_{n=1}^{+\infty}R_{(n)}\,z^{n}\,,\qquad R_{(n)}u:=\mathcal{G}%
_{(n)}\ast u\,, \label{esp}%
\end{equation}
and the series strongly converges in $\mathcal{L}(${$\mathcal{E}$}$^{\prime
}(\mathbb{R}^{3}),${$\mathcal{D}$}$^{\prime}(\mathbb{R}^{3}))$.
%Then, whenever $z\in \CO_+$, by the equivalence between strong and uniform analyticity for Banach space-valued functions (see ....) and by the Identity Theorem for analytic functions, \eqref{esp} converges in ${\mathscr L}(H_{\comp}^{s}(\RE^{3}),H^{s+2}(\RE^{3}))$ for any real $s$.

\subsection{Trace maps}

Here we recall some well known definitions and results about traces in Sobolev
spaces (see, e.g., \cite{McLe}). The zero and first-order traces on $\Gamma$
are defined on smooth functions as%
\begin{equation}
\gamma_{0}u=\left.  u\right\vert _{\Gamma}\,,\qquad\gamma_{1}u=\nu\cdot\nabla
u|{\Gamma}\,, \label{trace}%
\end{equation}
where $\nu$ is the exterior unit normal to $\Gamma$, and extend to bounded
linear operators
\begin{equation}
\gamma_{0}\in{\mathcal{L}}(H^{s}(\mathbb{R}^{3}),H^{{s-\frac{1}{2}}}%
(\Gamma))\,,\quad s>\frac{1}{2}\,,\qquad\gamma_{1}\in{\mathcal{L}}%
(H^{s}(\mathbb{R}^{3}),H^{{s-\frac{3}{2}}}(\Gamma))\,,\quad s>\frac{3}{2}\,.
\label{trace_est}%
\end{equation}
The one-sided trace maps
\[
\gamma_{0}^{\mathrm{in/ex}}\in{\mathcal{L}}(H^{s}(\Omega_{\mathrm{in/ex}%
}),H^{s-\frac{1}{2}}(\Gamma))\,,\quad s>\frac{1}{2}\,,\qquad\gamma
_{1}^{\mathrm{in/ex}}\in{\mathcal{L}}(H^{s}(\Omega_{\mathrm{in/ex}%
}),H^{{s-\frac{3}{2}}}(\Gamma))\,,\quad s>\frac{3}{2}\,,
\]
defined on smooth (up to the boundary) functions by%
\[
\gamma_{0}^{\mathrm{in/ex}}u_{\mathrm{in/ex}}=u_{\mathrm{in/ex}}%
|\Gamma\,,\qquad\gamma_{1}^{\mathrm{in/ex}}u_{\mathrm{in/ex}}=\nu\cdot\nabla
u_{\mathrm{in/ex}}|\Gamma\,,
\]
can be extended to
\[
\gamma_{0}^{\mathrm{in/ex}}\in{\mathcal{L}}(H_{\Delta}^{0}(\Omega
_{\mathrm{in/ex}})),H^{-{\frac{1}{2}}}(\Gamma))\,,\qquad\gamma_{1}%
^{\mathrm{in/ex}}\in{\mathcal{L}}(H_{\Delta}^{0}(\Omega_{\mathrm{in/ex}%
})),H^{-{\frac{3}{2}}}(\Gamma))\,,
\]
where%
\[
H_{\Delta}^{0}(\Omega_{\mathrm{in/ex}}):=\{u_{\mathrm{in/ex}}\in L^{2}%
(\Omega_{\mathrm{in/ex}}):\Delta u_{\mathrm{in/ex}}\in L^{2}(\Omega
_{\mathrm{in/ex}})\}\,,
\]%
\[
\Vert u_{\mathrm{in/ex}}\Vert_{H_{\Delta}^{0}(\Omega_{\mathrm{in/ex}})}%
^{2}:=\Vert\Delta u_{\mathrm{in/ex}}\Vert_{L^{2}(\Omega_{\mathrm{in/ex}})}%
^{2}+\Vert u_{\mathrm{in/ex}}\Vert_{L^{2}(\Omega_{\mathrm{in/ex}})}^{2}\,.
\]
Setting
\[
\Delta_{\Omega_{\mathrm{in/ex}}}^{\mathrm{max}}:=\Delta|H_{\Delta}^{0}\left(
\Omega_{\mathrm{in/ex}}\right)  \,,
\]
by the \textquotedblright half\textquotedblright\ Green formula (see
\cite[Theorem 4.4]{McLe}), one has, for any $u,v\in H^{1}(\Omega
_{\mathrm{in/ex}})\cap H_{\Delta}^{0}(\Omega_{\mathrm{in/ex}})$,
\begin{align}
&  \langle-\Delta_{\Omega_{\mathrm{in/ex}}}^{\mathrm{max}}u_{\mathrm{in/ex}%
},v_{\mathrm{in/ex}}\rangle_{L^{2}\left(  \Omega_{\mathrm{in/ex}}\right)
}\label{hG}\\
=  &  \langle\nabla u_{\mathrm{in/ex}},\nabla v_{\mathrm{in/ex}}\rangle
_{L^{2}(\Omega_{\mathrm{in/ex}})}+\epsilon_{\mathrm{in/ex}}\langle\gamma
_{1}^{\mathrm{in/ex}}u_{\mathrm{in/ex}},\gamma_{0}^{\mathrm{in/ex}%
}v_{\mathrm{in/ex}}\rangle_{H^{-1/2}(\Gamma),H^{1/2}(\Gamma)}\,,\quad\nonumber
\end{align}
where $\epsilon_{\mathrm{in}}=-1$ and $\epsilon_{\mathrm{ex}}=1$. Setting
\begin{equation}
H_{\Delta}^{0}(\mathbb{R}^{3}\backslash\Gamma):=H_{\Delta}^{0}(\Omega
_{\mathrm{in}})\oplus H_{\Delta}^{0}(\Omega_{\mathrm{ex}})\,, \label{hzd}%
\end{equation}
the extended traces allow to define
\[
\gamma_{\ell}\in{\mathcal{L}}(H_{\Delta}^{0}(\mathbb{R}^{3}\backslash
\Gamma),H^{-{\frac{1}{2}}-\ell}(\Gamma))\,,\qquad\lbrack\gamma_{\ell}%
]\in{\mathcal{L}}(H_{\Delta}^{0}(\mathbb{R}^{3}\backslash\Gamma),H^{-{\frac
{1}{2}}-\ell}(\Gamma))\,,\quad\ell=0,1\,,
\]
by
\[
\gamma_{\ell}u:=\frac{1}{2}\left(  \gamma_{\ell}^{\mathrm{in}}\left(
u|\Omega_{\mathrm{in}}\right)  +\gamma_{\ell}^{\mathrm{ex}}\left(
u|\Omega_{\mathrm{ex}}\right)  \right)  \,,\qquad\lbrack\gamma_{\ell
}]u:=\gamma_{\ell}^{\mathrm{ex}}\left(  u|\Omega_{\mathrm{ex}}\right)
-\gamma_{\ell}^{\mathrm{in}}\left(  u|\Omega_{\mathrm{in}}\right)  \,.
\]
Notice that the maps $\gamma_{\ell}|H^{2}(\mathbb{R}^{3}\backslash\Gamma)$,
$\ell=0,1$, coincide with the ones in \eqref{trace_est} when restricted to
$H^{2}(\mathbb{R}^{3})$.

These operators can be further extended to $H_{\alpha}^{2}(\mathbb{R}%
^{3}\backslash\Gamma)$, $\alpha<0$, by%
\[
\gamma_{\ell}^{\mathrm{in/ex}}u_{\mathrm{in/ex}}:=\gamma_{\ell}%
^{\mathrm{in/ex}}\left(  \chi u_{\mathrm{in/ex}}\right)  \,,\quad\ell=0,1\,,
\]
where $\chi$ belongs to $\mathcal{C}_{\mathrm{comp}}^{\infty}(\Omega^{c})$ and
$\chi=1$ on a neighborhood of $\Gamma$.

\subsection{\label{Sec_SL}The single layer boundary operator}

From (\ref{trace_est}) follows that $\gamma_{0}^{\ast}$ is a bounded mapping:
$H^{1/2-{s}}\left(  \Gamma\right)  \rightarrow H^{-s}\left(  \mathbb{R}%
^{3}\right)  $ for any $s>1/2$. Since $\gamma_{0}^{\ast}\varphi$ has bounded
support, results: $\gamma_{0}^{\ast}\in\mathcal{L}\left(  H^{1/2-{s}}\left(
\Gamma\right)  ,\mathcal{E}^{\prime}\left(  \mathbb{R}^{3}\right)  \right)  $.
Let $z\in\mathbb{C}$; by $R_{z}\in\mathcal{L}\left(  {\mathcal{E}}^{\prime
}\left(  \mathbb{R}^{3}\right)  ,{\mathcal{D}}^{\prime}\left(  \mathbb{R}%
^{3}\right)  \right)  $, we get: $R_{z}\gamma_{0}^{\ast}\in\mathcal{L}\left(
H^{1/2-{s}}\left(  \Gamma\right)  ,{\mathcal{D}}^{\prime}\left(
\mathbb{R}^{3}\right)  \right)  $, $s>1/2$. This defines the well known single
layer operator
\[
S\!L_{z}=R_{z}\gamma_{0}^{\ast}\,.
\]
Let recall from \cite[Corollary 6.14]{McLe} that the mapping properties%
\begin{equation}%
\begin{array}
[c]{ccc}%
\chi SL_{z}\in\mathcal{L}(H^{s}(\Gamma),H^{s+3/2}(\mathbb{R}^{3}%
\backslash\Gamma))\,, &  & s>-1
\end{array}
\label{slmp}%
\end{equation}
and the jump relations%
\begin{equation}%
\begin{array}
[c]{ccc}%
\left[  \gamma_{0}\right]  S\!L_{z}=0\,, &  & \left[  \gamma_{1}\right]
S\!L_{z}=-1\,,
\end{array}
\label{jump}%
\end{equation}
hold for any $\chi\in\mathcal{C}_{\mathrm{comp}}^{\infty}\left(
\mathbb{R}^{3}\right)  $ and $z\in{\mathbb{C}}$. Moreover, $S\!L_{z}$ has the
integral representation
\begin{equation}
S\!L_{z}\phi=\int_{\Gamma}\mathcal{G}_{z}\left(  \cdot-y\right)
\,\phi(y)\,d\sigma(y)\,, \label{intrep}%
\end{equation}
where $\sigma$ denotes the surface measure. When $z\in{\mathbb{C}}_{+}$, the
mapping properties of $R_{z}$, the identity: $S\!L_{z}=(\gamma_{0}R_{-\bar{z}%
})^{\ast}$ and a duality argument allow to improve (\ref{slmp}) as follows%
\[%
\begin{array}
[c]{ccccc}%
S\!L_{z}\in\mathcal{L}\left(  H^{s}\left(  \Gamma\right)  ,H^{s+3/2}\left(
{\mathbb{R}}^{3}\right)  \right)  \,, &  & s\geq-3/2\,, &  & z\in{\mathbb{C}%
}_{+}\,.
\end{array}
\]

Next we define the single layer boundary operator
\[
S_{z}:=\gamma_{0}S\!L_{z}\,.
\]
By \cite[Theorem 7.2]{McLe}, $S_{z}\in\mathcal{L}(H^{s}(\Gamma),H^{s+1}%
(\Gamma))$ for any real $s$. The operator $S_{0}$ plays a central role in our
construction. By \cite[Corollary 8.13]{McLe}, $S_{0}:H^{-1/2}(\Gamma
)\rightarrow H^{1/2}(\Gamma))$ is (self-adjoint) positive and bounded from
below:
\begin{equation}
\langle\phi,S_{0}\phi\rangle_{H^{-1/2}(\Gamma),H^{1/2}(\Gamma)}\geq
c_{0}\,\Vert\phi\Vert_{H^{-1/2}(\Gamma)}^{2}\,,\qquad c_{0}>0. \label{coe}%
\end{equation}
Hence $S_{0}^{-1}\in${$\mathcal{L}$}$(H^{1/2}(\Gamma),H^{-1/2}(\mathbb{R}%
^{3}))$ provides an inner product in $H^{1/2}(\Gamma)$ defined by
\begin{equation}
\langle\phi,\varphi\rangle_{S_{0}^{-1}}:=\langle S_{0}^{-1}\phi,\varphi
\rangle_{H^{-1/2}(\Gamma),H^{1/2}(\Gamma)}\,. \label{ip}%
\end{equation}
By \eqref{coe},
%\be
%\|\phi\|^{2}_{H^{1/2}(\Gamma)}\le c_{1}\|S_{0}^{-1}\phi\|^{2}_{H^{-1/2}(\Gamma)}\le c_{2}\langle\phi,\phi\rangle_{S^{-1}_{0}}\le c_{2} \|S^{-1}_{0}\phi\|^{2}_{H^{-1/2}(\RE^{3})}\|\phi\|^{2}_{H^{1/2}(\Gamma)}\le c_{3}\|\phi\|^{2}_{H^{1/2}(\Gamma)}
%\ee
the inner product in \eqref{ip} induces a norm $\Vert\cdot\Vert_{S_{0}^{-1}}$
on $H^{1/2}(\Gamma)$ which is equivalent to the original one.
%The proof of next Lemma is provided in Appendix 2, Subsection \ref{P_anS}.
%By \eqref{anSL},  one immediately gets that
%\be
%\text{ $\CO_{+}\ni z\mapsto S_{z}$ is  a ${\B}(H^{-s}(\Gamma),H^{-s+1}(\RE^{3}))\,$-valued analytic  map for any $s\in(0,1)$.}
%\ee
%Such a results can be improved, as regards the domain of analyticity, as follows

\begin{lemma}
\label{anS}$\mathbb{C}\ni z\mapsto S_{z}$ is a {$\mathcal{L}$}$\left(
H^{-1/2}\left(  \Gamma\right)  ,H^{1/2}\left(  \mathbb{R}^{3}\right)  \right)
\,$-valued analytic map and
\begin{equation}
S_{z}=S_{0}+\sum_{n=1}^{+\infty}S_{(n)}\,z^{n}\,,
\end{equation}
where $S_{(n)}$ has the integral representation
\begin{equation}
S_{(n)}\phi(x)=\frac{1}{4\pi}\,\frac{i^{n}}{n!}\,\int_{\Gamma}|x-y|^{n-1}%
\phi(y)\,d\sigma(y)\,, \label{Sn}%
\end{equation}
and the series converges in {$\mathcal{L}$}$_{H\!S}(H^{-1/2}(\Gamma
),H^{1/2}(\mathbb{R}^{3}))$. Let $D_{\Omega}$ be the discrete set $D_{\Omega
}:=\{z\in\mathbb{C}:z^{2}\in\sigma(-\Delta_{\Omega}^{D})\}$. Then:
$\mathbb{C}\backslash D_{\Omega}\ni z\mapsto S_{z}^{-1}$ is a {$\mathcal{L}$%
}$\left(  H^{1/2}\left(  \Gamma\right)  ,H^{-1/2}\left(  \mathbb{R}%
^{3}\right)  \right)  \,$-valued analytic map.
\end{lemma}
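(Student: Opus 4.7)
The plan is as follows. First I would establish the power-series representation by inserting the expansion (\ref{gexp}) of $\mathcal{G}_z$ into the integral representation (\ref{intrep}); this formally identifies the $n$-th coefficient of $S_z$ with the boundary integral operator $S_{(n)}$ in (\ref{Sn}), with $S_{(0)}=S_0$. To upgrade the formal identity to genuine convergence in $\mathcal{L}_{H\!S}(H^{-1/2}(\Gamma),H^{1/2}(\Gamma))$, I would exploit that $\Gamma$ is a smooth compact manifold of diameter $d_\Omega$: for $n\geq 1$ the kernel $\frac{i^n}{4\pi\,n!}|x-y|^{n-1}$ is jointly smooth on $\Gamma\times\Gamma$ and bounded pointwise by $d_\Omega^{n-1}/(4\pi\,n!)$, so the standard Schatten-class criterion for integral operators on closed manifolds yields $\|S_{(n)}\|_{H\!S}\leq c\,d_\Omega^{n-1}/n!$. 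The resulting series converges absolutely in Hilbert--Schmidt norm on every compact subset of $\mathbb{C}$, giving the stated representation and operator-norm analyticity.

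Second, I would characterize $\ker(S_z)$ for $z\notin D_\Omega$. If $\phi\in H^{-1/2}(\Gamma)$ satisfies $S_z\phi=0$, set $u:=S\!L_z\phi$. By (\ref{slmp}) and (\ref{jump}) one has $(\Delta+z^2)u=0$ in $\mathbb{R}^3\setminus\Gamma$, $\gamma_0 u=0$, and $[\gamma_1]u=-\phi$. Then $u|_\Omega$ solves the interior homogeneous Dirichlet problem at energy $z^2\notin\sigma(-\Delta_\Omega^D)$, hence $u|_\Omega=0$, so $\gamma_1^{\mathrm{in}}u=0$. Rather than push for a direct exterior uniqueness statement at complex $z$, I would defer the conclusion $u|_{\Omega_{\mathrm{ex}}}=0$ to the Fredholm argument of the next step, which will automatically supply it.

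Third, I would invoke the analytic Fredholm theorem. Factor
\[
S_z = S_0\bigl(I+S_0^{-1}(S_z-S_0)\bigr).
\]
By the first step together with $S_0^{-1}\in\mathcal{L}(H^{1/2}(\Gamma),H^{-1/2}(\Gamma))$, the correction $S_0^{-1}(S_z-S_0)\in\mathcal{L}(H^{-1/2}(\Gamma))$ is Hilbert--Schmidt (hence compact), depends analytically on $z\in\mathbb{C}$, and vanishes at $z=0$. Since $S_0$ is an isomorphism by (\ref{coe}), the analytic Fredholm theorem produces a discrete exceptional set $\Sigma\subset\mathbb{C}$ off which $S_z$ is boundedly invertible with $z\mapsto S_z^{-1}$ analytic in $\mathcal{L}(H^{1/2}(\Gamma),H^{-1/2}(\Gamma))$. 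The injectivity of the preceding step shows $\Sigma\subseteq D_\Omega$; conversely, for $z\in D_\Omega$ a non-trivial interior Dirichlet eigenfunction $u$ at eigenvalue $z^2$, extended by zero outside $\Omega$, produces a non-zero $\phi:=\gamma_1^{\mathrm{in}}u\in\ker(S_z)$ through the jump formula, yielding $\Sigma=D_\Omega$.

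The main obstacle I expect is circumventing a direct uniqueness argument for the exterior Helmholtz equation at arbitrary complex $z$, since Sommerfeld-type arguments belong to $\mathbb{C}_+$; this is precisely why the proof proceeds via the abstract Fredholm route rather than trying to argue the kernel is trivial by interior/exterior uniqueness separately. A secondary technical point is to verify the Hilbert--Schmidt bound on $S_{(n)}$ with the correct Sobolev orders on $\Gamma$, so that $S_z-S_0$ is truly compact in $\mathcal{L}(H^{-1/2}(\Gamma))$ and not merely in $\mathcal{L}(L^2(\Gamma))$.
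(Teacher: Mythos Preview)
Your overall strategy---expand the kernel, estimate Hilbert--Schmidt norms term by term, then run Fredholm theory---coincides with the paper's. Two points need attention, one technical and one a genuine gap.

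On the Hilbert--Schmidt bound: the pointwise estimate $|\,|x-y|^{n-1}|\le d_\Omega^{\,n-1}$ controls only the $L^2(\Gamma)\to L^2(\Gamma)$ Hilbert--Schmidt norm, not the $H^{-1/2}(\Gamma)\to H^{1/2}(\Gamma)$ one asserted in the lemma. The paper handles this by passing to the orthonormal bases $\phi_k^{\pm}=(-\Delta_{LB}+1)^{\mp 1/4}\varphi_k$ of $H^{\pm 1/2}(\Gamma)$; the matrix elements then become $L^2(\Gamma\times\Gamma)$ pairings of the kernel against $(-\Delta_{LB}+1)^{1/4}\otimes(-\Delta_{LB}+1)^{1/4}$ applied to the eigenfunctions, which is bounded via \cite[Proposition 4.3]{DR} by the $H^1(\Gamma\times\Gamma)$-norm of the kernel. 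This brings in derivatives of $|x-y|^{n-1}$ and gives the summable bound. You flag this as a ``secondary technical point'', but it is precisely the content of the convergence claim.

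The injectivity step is circular. For $z\notin D_\Omega$ you correctly get $u|_\Omega=0$ and $\gamma_1^{\mathrm{in}}u=0$, then announce that ``the Fredholm argument of the next step will automatically supply'' $u|_{\Omega_{\mathrm{ex}}}=0$. But in that next step you \emph{use} injectivity to conclude $\Sigma\subseteq D_\Omega$. Analytic Fredholm alone only produces a discrete set $\Sigma$; it cannot locate it inside $D_\Omega$ without an independent proof that $\ker S_z=\{0\}$ for every $z\notin D_\Omega$, and for $z\in\mathbb{C}_-$ the single layer $S\!L_z\phi$ grows exponentially, so the Sommerfeld/Rellich route is closed. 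The paper avoids the issue by citing \cite[Theorem~7.6]{McLe} for the Fredholm-index-zero property and \cite[Theorem~7.5]{McLe} for the equivalence $\ker S_z\neq\{0\}\iff z^2\in\sigma(-\Delta_\Omega^D)$ (proved there through the boundary integral equation structure, not exterior uniqueness), then \cite[Theorem~5.1]{Tay} for analyticity of the inverse. Your analytic-Fredholm factorisation $S_z=S_0(I+S_0^{-1}(S_z-S_0))$ is a perfectly good substitute for the Taylor citation, but you still owe a self-contained kernel characterisation valid on all of $\mathbb{C}$.
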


\begin{proof}
Let $\{\phi_{k}^{\pm}\}_{1}^{\infty}\subset\mathcal{C}^{\infty}(\Gamma)$ be
the orthonormal basis in $H^{\pm1/2}(\Gamma)$ defined by $\phi_{k}^{\pm
}:=(-\Delta_{LB}+1)^{\mp1/4}\varphi_{k}$, where $\{\varphi_{k}\}_{1}^{\infty
}\subset\mathcal{C}^{\infty}(\Gamma)$ is the set of normalized eigenfunctions
of the self-adjoint operator $\Delta_{LB}:H^{2}(\Gamma)\subset L^{2}%
(\Gamma)\rightarrow L^{2}(\Gamma)$. Here $\Delta_{LB}$ denotes the
Laplace-Beltrami operator on the surface $\Gamma$ with respect to the
Riemannian metric induced by the embedding $\Gamma\subset\mathbb{R}^{3}$. For
any couple of indices $(i,j)$, one has
\begin{align*}
&  \langle\phi_{i}^{+},S_{(n)}\phi_{j}^{-}\rangle_{H^{1/2}(\Gamma)}%
=\langle\phi_{i}^{-},S_{(n)}\phi_{j}^{-}\rangle_{L^{2}(\Gamma)}=\int%
_{\Gamma\times\Gamma}\phi_{i}^{-}(x)\,\mathcal{G}_{(n)}(x-y)\,\phi_{j}%
^{-}(y)\,d\sigma(x)d\sigma(y)\\
=  &  \langle\phi_{i}^{-}\otimes\phi_{j}^{-},\mathcal{R}_{(n)}\rangle
_{L^{2}(\Gamma)\otimes L^{2}(\Gamma)}=\big\langle\varphi_{i}\otimes\varphi
_{j},\big((-\Delta_{LB}+1)^{1/4}\otimes(-\Delta_{LB}+1)^{1/4}\big)\mathcal{R}%
_{(n)}\big\rangle_{L^{2}(\Gamma)\otimes L^{2}(\Gamma)}\,,
\end{align*}
where $\mathcal{R}_{(n)}(x,y):=\mathcal{G}_{(n)}(x-y)$. Therefore $S_{(n)}$ is
a Hilbert-Schmidt operator and its Hilbert-Schmidt norm is estimated by (the
penultimate inequality follows from \cite[Proposition 4.3]{DR})
\begin{align*}
&  \left.  \Vert S_{(n)}\Vert_{H\!S}^{2}=\sum_{k=1}^{\infty}\Vert S_{(n)}%
\phi_{k}^{-}\Vert_{H^{1/2}}^{2}\right. \\
&  \left.  =\sum_{i,j=1}^{\infty}\left\vert \big\langle\varphi_{i}%
\otimes\varphi_{i},\big((-\Delta_{LB}+1)^{1/4}\otimes(-\Delta_{LB}%
+1)^{1/4}\big)\mathcal{R}_{(n)}\big\rangle_{L^{2}(\Gamma)\otimes L^{2}%
(\Gamma)}\right\vert ^{2}\right. \\
&  \left.  =\Vert((-\Delta_{LB}+1)^{1/4}\otimes(-\Delta_{LB}+1)^{1/4}%
)\mathcal{R}_{\left(  n\right)  }\Vert_{L^{2}(\Gamma)\otimes L^{2}(\Gamma
)}^{2}\right. \\
&  \left.  \leq c\,\Vert\mathcal{R}_{\left(  n\right)  }\Vert_{H^{1}%
(\Gamma\times\Gamma)}^{2}\right. \\
&  \left.  \leq c\ \frac{|\Gamma|^{2}}{(n!)^{2}}\,\big((d_{\Omega}^{n-1}%
)^{2}+((n-1)\,d_{\Omega}^{n-2})^{2}\big)\,.\right.
\end{align*}
Hence the series
\[
\widetilde{S}_{z}:=\sum_{n=2}^{+\infty}S_{(n)}\,z^{n}%
\]
converges in $\mathcal{L}_{H\!S}(H^{1/2}(\Gamma))$ for any $z\in\mathbb{C}$
and defines the {$\mathcal{L}$}$_{H\!S}(H^{1/2}(\Gamma))\,$-valued analytic
map $\mathbb{C}\ni z\mapsto\widetilde{S}_{z}$. By \eqref{gexp} and
\eqref{intrep}, one has $\langle\phi_{i},\widetilde{S}_{z}\phi_{j}%
\rangle_{H^{1/2}(\Gamma)}=\langle\phi_{i},(S_{z}-S_{0})\phi_{j}\rangle
_{H^{1/2}(\Gamma)}$ for any $z\in\mathbb{C}$ and for any couple $(i,j)$.
Therefore $S_{z}=S_{0}+\widetilde{S}_{z}$ for any $z\in\mathbb{C}$.

By \cite[Theorem 7.6]{McLe}, $S_{z}$ is Fredholm with zero index; by
\cite[Theorem 7.5]{McLe}, $\mathrm{ker}(S_{z})\not =\{0\}$ is equivalent to
the existence of non-trivial solutions of $(\Delta_{\Omega}^{D}+z^{2})u=0$.
Hence $S_{z}^{-1}\in${$\mathcal{L}$}$(H^{-1/2}(\Gamma),H^{1/2}(\mathbb{R}%
^{3}))$ for any $z\in\mathbb{C}\backslash D_{\Omega}$. The proof is then
concluded by Lemma \ref{anS} and by the analyticity of the inverse (see
\cite[Theorem 5.1]{Tay}
%\cite[Lemma 4, Section 6, Chapter VII]{DS}
). \hfill
\end{proof}

\subsection{\label{Sec_K}The Neumann-Poincar\'{e} operator}

Proceeding as before we observe from (\ref{trace_est}) that $\gamma_{1}^{\ast
}\in\mathcal{L}\left(  H^{3/2-{s}}\left(  \Gamma\right)  ,\mathcal{E}^{\prime
}\left(  \mathbb{R}^{3}\right)  \right)  $ for $s>3/2$. Then, for
$z\in\mathbb{C}$ we get: $R_{z}\gamma_{0}^{\ast}\in\mathcal{L}\left(
H^{1/2-{s}}\left(  \Gamma\right)  ,{\mathcal{D}}^{\prime}\left(
\mathbb{R}^{3}\right)  \right)  $, $s>1/2$. This defines the double layer
operator
\[
D\!L_{z}=R_{z}\gamma_{1}^{\ast}\,.
\]
Let recall from \cite[Corollary 6.14]{McLe} that the mapping properties%
\[%
\begin{array}
[c]{ccc}%
\chi DL_{z}\in\mathcal{L}(H^{s}(\Gamma),H^{s+1/2}(\mathbb{R}^{3}%
\backslash\Gamma))\,, &  & s>0
\end{array}
\]
hold for any $\chi\in\mathcal{C}_{\mathrm{comp}}^{\infty}\left(
\mathbb{R}^{3}\right)  $ and $z\in{\mathbb{C}}$. Moreover, $DL_{z}$ has the
integral representation
\[
D\!L_{z}\phi=\int_{\Gamma}\nu(y)\!\cdot\!\nabla_{\!y}\mathcal{G}_{z}%
(\cdot-y)\phi(y)\,d\sigma(y)\,.
\]

Next we define the \emph{Neumann-Poincar\'{e}} boundary operator
\[
K_{z}:=\gamma_{0}D\!L_{z}\,.
\]
By \cite[Theorem 7.2]{McLe}, $K_{z}\in\mathcal{L}(H^{s}(\Gamma))$ for any real
$s$. The next Lemma resumes the spectral properties of $K_{0}$ (see, e.g.,
\cite[Section 4]{MS}):

\begin{lemma}
$K_{0}$ is a compact operator in $L^{2}(\Gamma)$ and $\sigma(K_{0}%
)\subseteq[-1/2,1/2)$; $\lambda_{0}=-1/2$ is a simple eigenvalue and the
corresponding eigenfunction is $\phi_{0}=1$.
\end{lemma}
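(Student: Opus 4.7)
The statement has three parts: compactness of $K_0$, the spectral inclusion $\sigma(K_0)\subseteq[-1/2,1/2)$, and simplicity of the eigenvalue $-1/2$ with eigenfunction $1$. The plan is to handle them in this order, relying on classical boundary-integral-operator arguments (jump relations, Plemelj symmetrization, Green's identities) which are the natural tools here and are already in the paper's formalism.

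For compactness, I would start from the integral representation
\[
(K_{0}\phi)(x) = \int_{\Gamma} \nu(y)\cdot\nabla_{y}\mathcal{G}_{0}(x-y)\,\phi(y)\,d\sigma(y)
= -\frac{1}{4\pi}\int_{\Gamma} \frac{\nu(y)\cdot(x-y)}{|x-y|^{3}}\,\phi(y)\,d\sigma(y),
\]
and observe that the smoothness of $\Gamma$ forces the numerator $\nu(y)\cdot(x-y)$ to vanish to first order along $\Gamma$ as $y\to x$. Hence the kernel is only weakly singular on $\Gamma\times\Gamma$ (of order $|x-y|^{-1}$), which via a standard estimate on surface integrals of weakly singular kernels yields compactness of $K_{0}$ on $L^{2}(\Gamma)$.

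For the identity $K_{0}1=-\tfrac12$, I would compute $D\!L_{0}1$ directly by the divergence theorem, in the same spirit as Lemma \ref{Lemma_Schur_coeff}: one gets $D\!L_{0}1(x)=-1$ for $x\in\Omega$ and $D\!L_{0}1(x)=0$ for $x\in\Omega_{\mathrm{ex}}$. Taking the mean trace gives $K_{0}1=\tfrac12(-1+0)=-\tfrac12$, so $1\in\ker(K_{0}+\tfrac12)$.

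For the spectral inclusion and simplicity I would use the jump relations $\gamma_{0}^{\mathrm{in}}D\!L_{0}=(-\tfrac12+K_{0})$, $\gamma_{0}^{\mathrm{ex}}D\!L_{0}=(\tfrac12+K_{0})$, together with the fact (Plemelj/Calder\'on symmetrization) that $K_{0}$ is self-adjoint with respect to the $S_{0}^{-1}$-inner product introduced in \eqref{ip}; this already gives real spectrum. To bound it: for an eigenfunction $\phi$ with eigenvalue $\lambda$, apply Green's identity to $u:=D\!L_{0}\phi$ on $\Omega$ and $\Omega_{\mathrm{ex}}$, exploit continuity of the normal derivative across $\Gamma$ and the Dirichlet jump $[\gamma_{0}]u=\phi$, and sum the two energies, obtaining an identity of the shape $(\lambda-\tfrac12)E_{\mathrm{in}}+(\lambda+\tfrac12)E_{\mathrm{ex}}=0$ with $E_{\mathrm{in}},E_{\mathrm{ex}}\ge 0$, which forces $\lambda\in[-\tfrac12,\tfrac12]$. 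To exclude $\lambda=\tfrac12$, note $K_{0}\phi=\tfrac12\phi$ implies $\gamma_{0}^{\mathrm{in}}D\!L_{0}\phi=0$, hence $D\!L_{0}\phi=0$ in $\Omega$ by unique solvability of the interior Dirichlet problem; the zero jump of $\gamma_{1}$ then gives vanishing Neumann data on $\Omega_{\mathrm{ex}}$ for a harmonic, $O(|x|^{-2})$-decaying function, so $D\!L_{0}\phi\equiv 0$ in $\Omega_{\mathrm{ex}}$ too, whence $\phi=[\gamma_{0}]D\!L_{0}\phi=0$. Finally, simplicity at $-\tfrac12$ is analogous: $K_{0}\phi=-\tfrac12\phi$ gives $\gamma_{0}^{\mathrm{ex}}D\!L_{0}\phi=0$, so $D\!L_{0}\phi\equiv 0$ in $\Omega_{\mathrm{ex}}$ (harmonic, vanishing at infinity, zero exterior trace); the normal-derivative jump then forces $\gamma_{1}^{\mathrm{in}}D\!L_{0}\phi=0$, so $D\!L_{0}\phi$ is a constant $c$ on $\Omega$, and the Dirichlet jump gives $\phi=-c$, proving the eigenspace is spanned by the constants.

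The routine pieces are the kernel estimate for compactness and the direct computation $K_{0}1=-\tfrac12$. The only step requiring genuine care is the exclusion of $\lambda=\tfrac12$ from the point spectrum and the simplicity at $-\tfrac12$, both of which hinge on propagating the vanishing of $D\!L_{0}\phi$ from one side of $\Gamma$ to the other via the jump relations; I expect this to be the main (though still classical) obstacle.
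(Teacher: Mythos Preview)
Your argument is the standard potential-theoretic proof and is correct in outline; the weak singularity of the kernel, the Gauss-type computation of $K_{0}1$, the Plemelj symmetrization giving self-adjointness in the $S_{0}^{-1}$ inner product, and the energy argument with $u=D\!L_{0}\phi$ together with the jump relations are exactly the classical ingredients. One small remark: with the paper's conventions ($[\gamma_{0}]=\gamma_{0}^{\mathrm{ex}}-\gamma_{0}^{\mathrm{in}}$, outward normal, and $\gamma_{1}^{\mathrm{in}}D\!L_{0}=\gamma_{1}^{\mathrm{ex}}D\!L_{0}$), Green's formula \eqref{hG} yields
\[
E_{\mathrm{in}}=(\lambda-\tfrac12)\langle\psi,\phi\rangle,\qquad
E_{\mathrm{ex}}=-(\lambda+\tfrac12)\langle\psi,\phi\rangle,
\]
from which the relation you want comes out as $(\lambda+\tfrac12)E_{\mathrm{in}}+(\lambda-\tfrac12)E_{\mathrm{ex}}=0$ rather than the version you wrote; the conclusion $\lambda\in[-\tfrac12,\tfrac12]$ is of course unaffected. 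The exclusion of $\tfrac12$ and the simplicity at $-\tfrac12$ via propagation through the jump relations are fine, and the $O(|x|^{-2})$ decay of $D\!L_{0}\phi$ does justify the exterior uniqueness steps.

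As for comparison with the paper: the paper does not prove this lemma at all. It is stated as a summary of known facts with a reference to \cite[Section~4]{MS}, so your write-up actually supplies more than the paper does. The cited source proceeds along essentially the same classical lines you sketch (compactness from the weakly singular kernel on a smooth boundary, Calder\'on/Plemelj symmetrization, and the energy/uniqueness arguments for the endpoints), so there is no genuinely different route to report.
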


Let $P_{0}:H^{1/2}(\Gamma)\rightarrow H^{1/2}(\Gamma)$ be the orthogonal
(w.r.t. the inner product $\langle\cdot,\cdot\rangle_{S_{0}^{-1}}$) projector
onto $\mathbb{C}$, i.e. onto the subspace generated by the eigenfunction
$\phi_{0}$:
\begin{equation}
P_{0}\phi:=c_{\Omega}^{-1}\langle\phi_{0},\phi\rangle_{S_{0}^{-1}}\,\phi
_{0}\equiv c_{\Omega}^{-1}\langle S_{0}^{-1}1,\phi\rangle_{H^{-1/2}%
(\Gamma),H^{1/2}(\Gamma)}\,1\,. \label{P0}%
\end{equation}
Denoting then by $Q_{0}$ the orthogonal projector onto $\mathrm{ran}%
(P_{0})^{\perp}$, i.e.
\begin{equation}
Q_{0}\phi:=\phi-P_{0}\phi\,, \label{Q0}%
\end{equation}
$K_{0}\in\mathcal{L}(H^{1/2}(\Gamma))$ has the decomposition
\begin{equation}
K_{0}=P_{0}K_{0}P_{0}+Q_{0}K_{0}Q_{0}=-\frac{1}{2}\,P_{0}+Q_{0}K_{0}Q_{0}\,.
\label{K_0_spectral_decomp}%
\end{equation}

\begin{lemma}
\label{anK}$\mathbb{C}\ni z\mapsto K_{z}$ is a {$\mathcal{L}$}$(H^{1/2}%
(\Gamma))\,$-valued analytic map and
\begin{equation}
K_{z}=K_{0}+K_{(2)}\,z^{2}+\sum_{n=3}^{+\infty}K_{(n)}\,z^{n}\,, \label{tqq}%
\end{equation}
where the series converges in {$\mathcal{L}$}$_{H\!S}(H^{1/2}(\Gamma))$. On
smooth functions, $K_{(n)}$ has the integral representation
\begin{equation}
K_{(n)}\phi(x)=-(n-1)\,\frac{1}{4\pi}\,\frac{i^{n}}{n!}\,\int_{\Gamma}%
\nu(y)\!\cdot\!(x-y)|x-y|^{n-3}\phi(y)\,d\sigma(y)\,. \label{Kn}%
\end{equation}

\end{lemma}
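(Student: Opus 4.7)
The plan is to mirror the proof of Lemma \ref{anS}, replacing the single-layer kernel by the normal derivative of the Green function. Starting from the representation $K_z\phi(x)=\gamma_0\!\int_\Gamma \nu(y)\!\cdot\!\nabla_{\!y}\mathcal{G}_z(x-y)\,\phi(y)\,d\sigma(y)$ and inserting the series \eqref{gexp}, I would read off the candidate coefficients by termwise differentiation: since $\mathcal{G}_{(n)}(x)=\frac{1}{4\pi}\frac{i^n}{n!}|x|^{n-1}$, one has $\nabla_{\!y}\mathcal{G}_{(n)}(x-y)=-(n-1)\frac{1}{4\pi}\frac{i^n}{n!}(x-y)|x-y|^{n-3}$, which produces exactly the formula \eqref{Kn}. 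The vanishing of the $z$-coefficient in the expansion then has a transparent explanation: $\mathcal{G}_{(1)}\equiv i/(4\pi)$ is constant, so $K_{(1)}=0$.

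Next I would establish the claimed Hilbert-Schmidt convergence for the tail $\widetilde K_z:=K_{(2)}z^2+\sum_{n\geq 3}K_{(n)}z^n$. As in Lemma \ref{anS}, let $\{\varphi_k\}\subset\mathcal{C}^\infty(\Gamma)$ be the $L^2(\Gamma)$-orthonormal eigenbasis of $\Delta_{LB}$, and set $\phi_k^\pm:=(-\Delta_{LB}+1)^{\mp 1/4}\varphi_k$, so that $\{\phi_k^\pm\}$ is an orthonormal basis of $H^{\pm 1/2}(\Gamma)$. Writing the kernel of $K_{(n)}$ as
\[
\mathcal{K}_{(n)}(x,y)=-\frac{n-1}{4\pi}\,\frac{i^n}{n!}\,\nu(y)\!\cdot\!(x-y)\,|x-y|^{n-3},
\]
one has $|\nu(y)\!\cdot\!(x-y)|\leq|x-y|\leq d_\Omega$ for $x,y\in\Gamma$, so $\mathcal{K}_{(n)}$ is continuous on $\Gamma\times\Gamma$ for $n\geq 2$ (and smooth for $n\geq 3$), with pointwise and first-derivative bounds of the form $\frac{c\,n}{n!}\,d_\Omega^{n-3}$. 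Repeating the tensor-product computation of Lemma \ref{anS},
\[
\|K_{(n)}\|_{H\!S}^2=\bigl\|\bigl((-\Delta_{LB}+1)^{1/4}\otimes(-\Delta_{LB}+1)^{1/4}\bigr)\mathcal{K}_{(n)}\bigr\|_{L^2(\Gamma\times\Gamma)}^2\leq c\,\|\mathcal{K}_{(n)}\|_{H^1(\Gamma\times\Gamma)}^2,
\]
yields a bound of order $(n/n!)^2\bigl(d_\Omega^{2(n-2)}+n^2 d_\Omega^{2(n-3)}\bigr)|\Gamma|^2$, which decays super-exponentially in $n$. Hence $\widetilde K_z$ converges in $\mathcal{L}_{H\!S}(H^{1/2}(\Gamma))$ for every $z\in\mathbb{C}$ and defines an entire $\mathcal{L}_{H\!S}(H^{1/2}(\Gamma))$-valued map.

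To conclude $K_z=K_0+\widetilde K_z$, I would test this identity on smooth densities: for $\phi\in\mathcal{C}^\infty(\Gamma)$ and $x\in\Gamma$, the series $\sum_n\mathcal{G}_{(n)}(\cdot)z^n$ converges in $\mathcal{D}'(\mathbb{R}^3)$, so after the normal derivative and the surface integration (both continuous operations on the tail) the equality holds pointwise; density of $\mathcal{C}^\infty(\Gamma)$ in $H^{1/2}(\Gamma)$ and the just-proved boundedness of both sides then upgrade it to an identity in $\mathcal{L}(H^{1/2}(\Gamma))$. Analyticity of $z\mapsto K_z$ follows at once from the Hilbert-Schmidt convergence.

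The main obstacle is not the tail but the interaction with the $n=0$ term: one must ensure that the termwise differentiation of the Green's series commutes with the $\gamma_0$ trace of $DL_z$, which for $K_0$ uses the classical principal-value interpretation of the Neumann-Poincaré kernel. Once this is isolated (it is exactly the contribution $K_0$ already known to belong to $\mathcal{L}(H^{1/2}(\Gamma))$ by \cite[Theorem 7.2]{McLe}), the remaining terms have bounded kernels and the argument reduces to the routine Hilbert-Schmidt summation above.
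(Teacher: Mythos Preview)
Your approach is exactly the paper's: expand the kernel termwise, identify $K_{(1)}=0$, and control the tail by a Hilbert--Schmidt estimate via the Laplace--Beltrami tensor trick as in Lemma~\ref{anS}. One slip to correct: since $K_{(n)}$ acts on $H^{1/2}(\Gamma)$ (not $H^{-1/2}(\Gamma)$ as $S_{(n)}$ does), the input basis is $\phi_j^{+}=(-\Delta_{LB}+1)^{-1/4}\varphi_j$, and the identity reads
\[
\|K_{(n)}\|_{H\!S}^{2}=\bigl\|\bigl((-\Delta_{LB}+1)^{1/4}\otimes(-\Delta_{LB}+1)^{-1/4}\bigr)\mathcal{K}_{(n)}\bigr\|_{L^{2}(\Gamma\times\Gamma)}^{2},
\]
with a $-1/4$ on the second factor. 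This is then bounded by $\|\mathcal{K}_{(n)}\|_{H^{1}(\Gamma\times\Gamma)}\|\mathcal{K}_{(n)}\|_{L^{2}(\Gamma\times\Gamma)}$, not $\|\mathcal{K}_{(n)}\|_{H^{1}}^{2}$. Your symmetric $+1/4\otimes +1/4$ expression is actually the (larger) $H^{-1/2}\!\to\! H^{1/2}$ HS norm, so your estimate still dominates the correct one for $n\geq 3$; but at $n=2$ your $H^{1}(\Gamma\times\Gamma)$ bound is not available (the derivative of $\nu(y)\!\cdot\!(x-y)/|x-y|$ is only $O(|x-y|^{-1})$), which is why the paper isolates $K_{(2)}$ and runs the HS summation only for $n\geq 3$.
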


\begin{proof}
We proceed as in the proof of Lemma \ref{anS}; see that proof for the
definitions of $\phi_{k}^{\pm}$. For any couple of indices $(i,j)$, one has
\begin{align*}
&  \langle\phi_{i}^{+},K_{(n)}\phi_{j}^{+}\rangle_{H^{1/2}(\Gamma)}%
=\langle\phi_{i}^{-},K_{(n)}\phi_{j}^{+}\rangle_{L^{2}(\Gamma)}=\int%
_{\Gamma\times\Gamma}\phi_{i}^{-}(x)\,\nu(y)\!\cdot\!\nabla_{y}\mathcal{G}%
_{(n)}(x-y)\,\phi_{j}^{+}(y)\,d\sigma(x)d\sigma(y)\\
=  &  \langle\phi_{i}^{-}\otimes\phi_{j}^{+},\mathcal{R}_{n}^{\prime}%
\rangle_{L^{2}(\Gamma)\otimes L^{2}(\Gamma)}=\langle\varphi_{i}\otimes
\varphi_{j},((-\Delta_{LB}+1)^{1/4}\otimes(-\Delta_{LB}+1)^{-1/4}%
)\,\mathcal{R}_{n}^{\,\prime}\rangle_{L^{2}(\Gamma)\otimes L^{2}(\Gamma)}\,,
\end{align*}
where
\begin{equation}
\mathcal{R}_{n}^{\,\prime}(x,y):=-(n-1)\,\frac{1}{4\pi}\,\frac{i^{n}}{n!}%
\,\nu(y)\!\cdot\!(x-y)|x-y|^{n-3}\,. \label{R'}%
\end{equation}
Therefore, for any $n\geq3$,
\begin{align*}
&  \left.  \Vert K_{(n)}\Vert_{H\!S}^{2}=\sum_{k=1}^{\infty}\Vert K_{(n)}%
\phi_{k}^{+}\Vert_{H^{1/2}}^{2}\right. \\
&  \left.  =\sum_{i,j=1}^{\infty}\left\vert \langle\varphi_{i}\otimes
\varphi_{j},((-\Delta_{LB}+1)^{1/4}\otimes(-\Delta_{LB}+1)^{-1/4}%
)\mathcal{R}_{n}^{\,\prime}\rangle_{L^{2}(\Gamma)\otimes L^{2}(\Gamma
)}\right\vert ^{2}\right. \\
&  \left.  =\Vert((-\Delta_{LB}+1)^{1/4}\otimes(-\Delta_{LB}+1)^{-1/4}%
)\mathcal{R}_{n}^{\,\prime}\Vert_{L^{2}(\Gamma)\otimes L^{2}(\Gamma)}%
^{2}\right. \\
&  \left.  \leq\Vert\mathcal{R}_{n}^{\,\prime}\Vert_{H^{1}(\Gamma\times
\Gamma)}\Vert\mathcal{R}_{n}^{\,\prime}\Vert_{L^{2}(\Gamma\times\Gamma
)}\right. \\
&  \left.  \leq\,c\ \frac{|\Gamma|^{2}}{(n!)^{2}}\ (n-1)^{2}\,d_{\Omega}%
^{n-2}\,\left(  d_{\Omega}^{n-2}+d_{\Omega}^{n-3}+(n-3)d_{\Omega}%
^{n-4}\right)  \,.\right.
\end{align*}
The proof is then concluded as in Lemma \ref{anS}. \hfill
\end{proof}

\subsection{\label{Sec_DN}The Dirichlet-to-Neumann operator}

In this section $z\in\mathbb{C}$ is such that $z^{2}\in\varrho(-\Delta
_{\Omega}^{D})$. The Dirichlet-to-Neumann operator
%(also referred to as the Steklov-Poincar\'{e} operator)
related to the interior Helmholtz equation is defined by
\begin{equation}
D\!N_{z}\varphi:=\gamma_{1}^{\mathrm{in}}u\,,\qquad%
\begin{cases}
\left(  \Delta_{\Omega}+z^{2}\right)  u=0\,,\\
\gamma_{0}^{\mathrm{in}}u=\varphi\,.
\end{cases}
\label{DN_k_def}%
\end{equation}
As is well known (see, e.g., \cite[Theorem 4.10]{McLe}) the solution exists
and is unique. By elliptic regularity (see, e.g., \cite[Theorem 4.21]{McLe}),
$D\!N_{z}$ extends to a pseudo-differential operator of order one on the whole
scale of Sobolev spaces $H^{s}(\Gamma)$:
\begin{equation}
D\!N_{z}\in{\mathcal{L}}(H^{s}(\Gamma),H^{s-1}(\Gamma))\,.
\label{DN_k_mapping}%
\end{equation}
If $z^{2}\in\mathbb{R}$, then $D\!N_{z}$ is self-adjoint as unbounded operator
between the dual couple $H^{s}(\Gamma)$-$H^{-s}(\Gamma)$, $D\!N_{z}%
:H^{s+1}(\Gamma)\subset H^{-s}(\Gamma)\rightarrow H^{s}(\Gamma)$ (see e.g. in
\cite[Sec. 2 and Example 4.9]{ArElKeSa}).

By \cite[Theorem 7.5]{McLe}, $u$ in \eqref{DN_k_def} is uniquely determined by
the solution of the boundary integral equation%
\begin{equation}
S_{z}\gamma_{1}^{\mathrm{in}}u=\left(  \frac{1}{2}+K_{z}\right)
\varphi\label{DN_k_eq}%
\end{equation}
so that $D\!N_{z}\in${$\mathcal{L}$}$(H^{1/2}(\Gamma),H^{-1/2}(\Gamma))$ has
the representation
\begin{equation}
D\!N_{z}=S_{z}^{-1}\left(  \frac{1}{2}+K_{z}\right)  \,. \label{DN_k_id}%
\end{equation}
By Lemma \eqref{anK} and Lemma \eqref{anS}, \eqref{DN_k_id} entails that
$\mathbb{C}\backslash D_{\Omega}\ni z\mapsto D\!N_{z}$ is a {$\mathcal{L}$%
}$\left(  H^{1/2}\left(  \Gamma\right)  ,H^{-1/2}\left(  \mathbb{R}%
^{3}\right)  \right)  \,$-valued analytic map.

\subsection{Further auxiliary operator expansions}

\begin{lemma}
\label{Lemma_R_k_eps_exp} Let $\operatorname{Im}z\geq0$, $z\not =0$, let
$y_{0}\in\mathbb{R}^{3}$ and define the linear operator $\Psi_{\varepsilon,z}$
by
\begin{equation}
(\Psi_{\varepsilon,z}u)( y) =\left(  R_{z}u\right)  \left(  y_{0}%
+\varepsilon\,\left(  y-y_{0}\right)  \right)  -\left(  R_{z}u\right)  (
y_{0}) \,. \label{R_k_eps_exp}%
\end{equation}
Then, for any star-shaped, bounded open set $B\subset\mathbb{R}^{3}$, one has
the estimate
\begin{equation}
\left\Vert \Psi_{\varepsilon,z}\right\Vert _{L_{\alpha}^{2}\left(
\mathbb{R}^{3}\right)  ,H^{2}\left(  B\right)  }\leq c_{\alpha,B}\,\Vert
R_{z}\Vert_{L_{\alpha}^{2}(\mathbb{R}^{3}),H_{-\alpha}^{2}(\mathbb{R}^{3}%
)}\,\varepsilon^{1/2}\,, \label{R_k_eps_rem_est_{lap}}%
\end{equation}
where $\alpha=0$ whenever $\operatorname{Im}z>0$ and $\alpha>1/2$ whenever
$\operatorname{Im}z=0$.
\end{lemma}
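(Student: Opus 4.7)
The plan is to reduce the estimate to a statement about how well a function in $H^2_{-\alpha}(\mathbb{R}^3)$ can be approximated by its value at $y_0$ after a small dilation. Setting $v := R_z u$, by the resolvent mapping properties recalled in Subsection~\ref{Section_LAP} one has $v \in H^2_{-\alpha}(\mathbb{R}^3)$ with $\|v\|_{H^2_{-\alpha}(\mathbb{R}^3)} \leq \|R_z\|_{L^2_\alpha, H^2_{-\alpha}} \|u\|_{L^2_\alpha(\mathbb{R}^3)}$ (with $\alpha = 0$ if $\operatorname{Im} z > 0$, and $\alpha > 1/2$ if $\operatorname{Im} z = 0$). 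The pointwise value $v(y_0)$ is well defined by the Sobolev embedding $H^2(\mathbb{R}^3) \hookrightarrow C^{0,1/2}_{\mathrm{loc}}(\mathbb{R}^3)$, so it suffices to show
\begin{equation*}
\|v \circ \Phi_\varepsilon - v(y_0)\|_{H^2(B)} \leq c_{B}\, \varepsilon^{1/2}\, \|v\|_{H^2_{-\alpha}(\mathbb{R}^3)},
\end{equation*}
where, as in \eqref{dilation_def}, $\Phi_\varepsilon(y) = y_0 + \varepsilon(y-y_0)$.

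To prove this, I change variables $x = \Phi_\varepsilon(y)$, $dy = \varepsilon^{-3} dx$, so that $\Phi_\varepsilon(B) \subset B_{R}(y_0)$ for a fixed radius $R$ (independent of $\varepsilon < 1$) since $B$ is bounded. On such a fixed compact neighborhood of $y_0$ the weight $\langle x\rangle^{-\alpha}$ is bounded away from $0$ and $\infty$, hence $\|v\|_{H^2(B_R(y_0))} \leq c_{\alpha,B} \|v\|_{H^2_{-\alpha}(\mathbb{R}^3)}$, and it remains to control the three contributions to $\|v\circ\Phi_\varepsilon - v(y_0)\|_{H^2(B)}^2$:
\begin{align*}
\|v\circ\Phi_\varepsilon - v(y_0)\|_{L^2(B)}^2 &= \varepsilon^{-3} \int_{\Phi_\varepsilon(B)} |v(x)-v(y_0)|^2\,dx, \\
\|\nabla(v\circ\Phi_\varepsilon)\|_{L^2(B)}^2 &= \varepsilon^{-1} \int_{\Phi_\varepsilon(B)} |\nabla v(x)|^2\,dx, \\
\|D^2(v\circ\Phi_\varepsilon)\|_{L^2(B)}^2 &= \varepsilon \int_{\Phi_\varepsilon(B)} |D^2 v(x)|^2\,dx.
\end{align*}
The last one is immediately bounded by $\varepsilon \|v\|_{H^2(B_R(y_0))}^2$. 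For the first, the $1/2$-Hölder continuity of $v$ together with $|\Phi_\varepsilon(B)| \sim \varepsilon^3$ and $|x-y_0| \lesssim \varepsilon$ on $\Phi_\varepsilon(B)$ gives a bound $\lesssim \varepsilon^{-3}\cdot \varepsilon^3 \cdot \varepsilon\, \|v\|_{H^2(\mathbb{R}^3)}^2 = \varepsilon\, \|v\|_{H^2}^2$.

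The delicate term is the gradient one, where the naive local $L^2$ control would produce a divergent factor $\varepsilon^{-1}$. To recover a positive power of $\varepsilon$ I would apply Hölder's inequality in the form $\|\nabla v\|_{L^2(\Phi_\varepsilon(B))}^2 \leq |\Phi_\varepsilon(B)|^{2/3}\, \|\nabla v\|_{L^6(\mathbb{R}^3)}^2$, together with the Sobolev embedding $H^2(B_{R+1}(y_0)) \hookrightarrow W^{1,6}(B_{R+1}(y_0))$ applied to a smooth cut-off of $v$; this yields $\|\nabla(v\circ\Phi_\varepsilon)\|_{L^2(B)}^2 \lesssim \varepsilon^{-1}\cdot \varepsilon^2 \|v\|_{H^2_{-\alpha}}^2 = \varepsilon\, \|v\|_{H^2_{-\alpha}}^2$. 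Combining the three estimates gives $\|v\circ\Phi_\varepsilon - v(y_0)\|_{H^2(B)}^2 \lesssim \varepsilon \|v\|_{H^2_{-\alpha}(\mathbb{R}^3)}^2$, which after taking square roots and composing with the operator norm of $R_z$ yields \eqref{R_k_eps_rem_est_{lap}}. The main obstacle is precisely this gradient estimate: the bare change of variables scales the wrong way, and one has to exploit the extra integrability $\nabla v \in L^6_{\mathrm{loc}}$ coming from $v \in H^2_{\mathrm{loc}}$ in three dimensions.
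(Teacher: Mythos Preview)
Your proof is correct and follows essentially the same approach as the paper: reduce to the estimate $\|v\circ\Phi_\varepsilon - v(y_0)\|_{H^2(B)}\le c\,\varepsilon^{1/2}\|v\|_{H^2_{-\alpha}}$, split into $L^2$, gradient, and Hessian contributions, and handle the critical gradient term via the Sobolev embedding $H^2_{\mathrm{loc}}\hookrightarrow W^{1,6}_{\mathrm{loc}}$ combined with H\"older's inequality. The only cosmetic difference is that the paper applies H\"older on $B$ before changing variables (using the star-shapedness to get $\Phi_\varepsilon(B)\subset B$), whereas you change variables first and work on $\Phi_\varepsilon(B)\subset B_R(y_0)$; your route in fact does not need the star-shapedness hypothesis.
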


\begin{proof}
Without loss of generality we can suppose that $y_{0}=0$. Given $u\in
L_{\alpha}^{2}(\mathbb{R}^{3})$, let us set
\[
\psi_{\varepsilon}(y):=\psi(\varepsilon y)\,,\qquad\psi:=R_{z}u\,.
\]
By $R_{z}\in\mathcal{L}(L_{\alpha}^{2}(\mathbb{R}^{3}),H_{-\alpha}%
^{2}(\mathbb{R}^{3}))$ (see Subsection \ref{Section_LAP}), one has
\[
\Vert\psi\Vert_{H_{-\alpha}^{2}(\mathbb{R}^{3})}^{2}\leq\Vert R_{z}%
\Vert_{L_{\alpha}^{2}(\mathbb{R}^{3}),H_{-\alpha}^{2}(\mathbb{R}^{3})}\,\Vert
u\Vert_{L_{\alpha}^{2}(\mathbb{R}^{3})}^{2}%
\]
and so it suffices to show that
\begin{align}
\Vert\psi_{\varepsilon}-\psi_{0}\Vert_{H^{2}(B)}^{2}=  &  \sum_{1\leq
i,j\leq3}\Vert\partial_{ij}^{2}\psi_{\varepsilon}\Vert_{L^{2}(B)}^{2}%
+\sum_{1\leq i\leq3}\Vert\partial_{i}\psi_{\varepsilon}\Vert_{L^{2}(B)}%
^{2}+\Vert\psi_{\varepsilon}-\psi_{0}\Vert_{L^{2}(B)}^{2}\nonumber\\
& \nonumber\\
&  \left.  \leq\,c_{\alpha,B}\,\Vert\psi\Vert_{H_{-\alpha}^{2}(\mathbb{R}%
^{3})}^{2}\,\varepsilon^{1/2}\,.\right.  \label{sts}%
\end{align}
The latter is consequence of the following estimates:

\noindent1)
\begin{align*}
&  \left.  \Vert\partial_{ij}^{2}\psi_{\varepsilon}\Vert_{L^{2}(B)}^{2}%
=\int_{B}|\partial_{ij}^{2}\psi_{\varepsilon}(y)|^{2}dy=\varepsilon
^{4}\!\!\int_{B}|\partial_{ij}^{2}\psi({\varepsilon}y)|^{2}dy=\varepsilon
\!\!\int_{B_{\varepsilon}}|\partial_{ij}^{2}\psi(y)|^{2}dy\leq\varepsilon
\!\!\int_{B}|\partial_{ij}^{2}\psi(y)|^{2}dy\right. \\
& \\
&  \left.  \leq\varepsilon\,\Vert\langle x\rangle^{\alpha}\Vert_{L^{\infty
}(B)}\!\!\int_{B}|\partial_{ij}^{2}\psi(y)|^{2}\langle y\rangle^{-\alpha
}\,dy\leq\varepsilon\,\Vert\langle x\rangle^{\alpha}\Vert_{L^{\infty}%
(B)}\,\Vert\partial_{ij}^{2}\psi\Vert_{L_{-\alpha}^{2}(\mathbb{R}^{3})}%
^{2}\leq\varepsilon\,\Vert\langle x\rangle^{\alpha}\Vert_{L^{\infty}(B)}%
\Vert\psi\Vert_{H_{-\alpha}^{2}(\mathbb{R}^{3})}^{2}\,;\right.
\end{align*}
2) by the Sobolev embedding $H^{1}(B)\subset L^{6}(B)$,
\begin{align*}
&  \left.  \Vert\partial_{i}\psi_{\varepsilon}\Vert_{L^{2}(B)}^{2}%
\leq|B|^{\frac{3}{2}}\left(  \int_{B}|\partial_{i}\psi_{\varepsilon}%
(y)|^{6}\,dy\right)  ^{\frac{1}{3}}=|B|^{\frac{3}{2}}\left(  \varepsilon
^{6}\int_{B}|\partial_{i}\psi(\varepsilon y|^{6}\,dy\right)  ^{\frac{1}{3}%
}=|B|^{\frac{3}{2}}\varepsilon\,\Vert\partial_{i}\psi\Vert_{L^{6}%
(B_{\varepsilon})}^{2}\right. \\
& \\
&  \left.  \leq|B|^{\frac{3}{2}}\varepsilon\,\Vert\partial_{i}\psi\Vert
_{L^{6}(B)}^{2}\leq c\,|B|^{\frac{3}{2}}\varepsilon\,\Vert\partial_{i}%
\psi\Vert_{H^{1}(B)}^{2}\leq c\,|B|^{\frac{3}{2}}\varepsilon\,\Vert\psi
\Vert_{H^{2}(B)}^{2}\leq c\,|B|^{\frac{3}{2}}\varepsilon\,\Vert\langle
x\rangle^{\alpha}\Vert_{L^{\infty}(B)}\Vert\psi\Vert_{H_{-\alpha}%
^{2}(\mathbb{R}^{3})}^{2}\,;\right.
\end{align*}
3) by the continuous embedding of $H^{2}(B)$ into the space of
H\"{o}lder-continuous functions of order $\frac{1}{2}$,
\[
\Vert\psi_{\varepsilon}-\psi_{0}\Vert_{L^{2}(B)}^{2}=\int_{B}|\psi(\varepsilon
y)-\psi(0)|^{2}\,dy\leq c\,\varepsilon\int_{B}|y|\,dy\,\,\Vert\psi\Vert
_{H^{2}(B)}^{2}\leq c\,\varepsilon\,\Vert\langle x\rangle^{\alpha}%
\Vert_{L^{\infty}(B)}\int_{B}|y|\,dy\,\,\Vert\psi\Vert_{H_{-\alpha}%
^{2}(\mathbb{R}^{3})}^{2}\,.
\]
\hfill
\end{proof}

\begin{corollary}
\label{Lemma_trace_R_eps_k_U_eps} Let $\operatorname{Im}z\geq0$, $z\not =0$,
let $y_{0}\in\mathbb{R}^{3}$ and define the linear operators $\Xi
_{\varepsilon,z}$ and $\Phi_{\varepsilon,z}$ by
\[
\Xi_{\varepsilon,z}u=\gamma_{0}R_{\varepsilon z}U_{\varepsilon}^{-1}%
u-\varepsilon^{-1/2}\left(  R_{z}u\right)  (y_{0})\,,
\]%
\[
\Phi_{\varepsilon,z}\,\phi=U_{\varepsilon}R_{\varepsilon z}\gamma_{0}^{\ast
}\phi-\varepsilon^{-1/2}\langle\phi\rangle\ \mathcal{G}_{z}\,,\quad
\qquad\langle\phi\rangle:=\langle1,\phi\rangle_{H^{3/2}(\Gamma),H^{-3/2}%
(\Gamma)}\,.
\]
Then, for any $\epsilon>0$,
\begin{equation}
\left\Vert \Xi_{\varepsilon,z}\right\Vert _{L_{\alpha}^{2}\left(
\mathbb{R}^{3}\right)  ,H^{3/2}\left(  \Gamma\right)  }\leq c_{\alpha
,B}\,\Vert\gamma_{0}\Vert_{H^{2}(B),H^{3/2}\left(  \Gamma\right)  }\,\Vert
R_{z}\Vert_{L_{\alpha}^{2}(\mathbb{R}^{3}),H_{-\alpha}^{2}(\mathbb{R}^{3})}
\label{Xi}%
\end{equation}
and
\begin{equation}
\left\Vert \Phi_{\varepsilon,z}\right\Vert _{H^{-3/2}(\Gamma),L_{\alpha}%
^{2}(\mathbb{R}^{3})}\leq c_{\alpha,B}\,\Vert\gamma_{0}\Vert_{H^{2}%
(B),H^{3/2}\left(  \Gamma\right)  }\,\Vert R_{z}\Vert_{L_{\alpha}%
^{2}(\mathbb{R}^{3}),H_{-\alpha}^{2}(\mathbb{R}^{3})}\,, \label{Phi}%
\end{equation}
where $B$ is any star-shaped open and bounded set such that $B\supset
\overline{\Omega}$, $c_{\alpha,B}$ and $\alpha$ are the same as in Lemma
\ref{Lemma_R_k_eps_exp}.
\end{corollary}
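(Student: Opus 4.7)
The plan is to reduce both bounds to Lemma \ref{Lemma_R_k_eps_exp} via the dilation identity \eqref{Res_conj}. First I would rewrite $\Xi_{\varepsilon,z}$ in terms of the operator $\Psi_{\varepsilon,z}$ introduced in \eqref{R_k_eps_exp}. Unravelling the conjugation $R_{z}=\varepsilon^{2}U_{\varepsilon}R_{\varepsilon z}U_{\varepsilon}^{-1}$ one obtains the pointwise identity
\[
(R_{\varepsilon z}U_{\varepsilon}^{-1}u)(y)=\varepsilon^{-1/2}\,(R_{z}u)\bigl(\Phi_{\varepsilon}(y)\bigr),
\]
so that, after subtracting $\varepsilon^{-1/2}(R_{z}u)(y_{0})$ and taking the Dirichlet trace on $\Gamma$,
\[
\Xi_{\varepsilon,z}\,u=\varepsilon^{-1/2}\,\gamma_{0}\bigl(\Psi_{\varepsilon,z}\,u\bigr).
\]
Estimate \eqref{Xi} is then immediate by combining \eqref{R_k_eps_rem_est_{lap}} with the continuity of $\gamma_{0}\in\mathcal{L}(H^{2}(B),H^{3/2}(\Gamma))$: the two factors $\varepsilon^{\pm 1/2}$ cancel, which is why no $\varepsilon$-power survives in the final bound.

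For $\Phi_{\varepsilon,z}$ I would avoid a second kernel computation and instead use a duality argument. The unitarity $U_{\varepsilon}^{\ast}=U_{\varepsilon}^{-1}$, the adjoint identity $R_{\varepsilon z}^{\ast}=R_{\varepsilon\bar{z}}$, the relation $\langle\gamma_{0}^{\ast}\phi,v\rangle=\langle\phi,\gamma_{0}v\rangle$, and the observation $\langle\mathcal{G}_{z}^{y_{0}},u\rangle_{L^{2}}=(R_{\bar{z}}u)(y_{0})$ (which is precisely the rank-one correction subtracted in the definition of $\Phi_{\varepsilon,z}$) together yield
\[
\langle\Phi_{\varepsilon,z}\phi,u\rangle_{L^{2}(\mathbb{R}^{3})}
=\langle\phi,\Xi_{\varepsilon,\bar{z}}\,u\rangle_{H^{-3/2}(\Gamma),H^{3/2}(\Gamma)}.
\]
Hence $\Phi_{\varepsilon,z}$ is the Banach-space adjoint of $\Xi_{\varepsilon,\bar{z}}$ with respect to the natural sesquilinear pairings. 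Since the admissible range of $\alpha$ in Lemma \ref{Lemma_R_k_eps_exp} depends only on whether $z$ lies on the real axis (so the prescriptions for $z$ and $\bar{z}$ coincide), the bound for $\Xi_{\varepsilon,\bar{z}}$ just established transposes, through the standard duality between the dual weighted $L^{2}$ spaces, into the estimate \eqref{Phi}.

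No substantial difficulty is expected: the quantitative Taylor-type remainder at $y_{0}$ is already packaged in Lemma \ref{Lemma_R_k_eps_exp}, and the Corollary merely reformulates it via the trace map and, by duality, via the single-layer potential. The only bookkeeping to carry out with care is the matching of the $\varepsilon$-powers, together with the recognition that the singular prefactor $\varepsilon^{-1/2}\langle\phi\rangle\mathcal{G}_{z}$ subtracted in $\Phi_{\varepsilon,z}$ is exactly the constant-in-$y$ boundary datum $\varepsilon^{-1/2}(R_{\bar{z}}u)(y_{0})$ subtracted in $\Xi_{\varepsilon,z}$, tested against $\phi$; without this matching the two rank-one corrections would not cancel under duality and neither operator would be uniformly bounded as $\varepsilon\to 0_{+}$.
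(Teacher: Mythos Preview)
Your approach is exactly the paper's: rewrite $\Xi_{\varepsilon,z}=\varepsilon^{-1/2}\gamma_{0}\Psi_{\varepsilon,z}$ via the dilation identity and then obtain $\Phi_{\varepsilon,z}$ by duality. One correction, though: the adjoint of the free resolvent is $R_{z}^{\ast}=R_{-\bar z}$ (since $(-\bar z)^{2}=\bar z^{2}$ and $-\bar z\in\overline{\mathbb{C}_{+}}$ when $z\in\overline{\mathbb{C}_{+}}$), and likewise $\overline{\mathcal{G}_{z}}=\mathcal{G}_{-\bar z}$, so the pairing gives $\langle\mathcal{G}_{z}^{y_{0}},u\rangle_{L^{2}}=(R_{-\bar z}u)(y_{0})$. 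Hence the duality reads $\Phi_{\varepsilon,z}=\Xi_{\varepsilon,-\bar z}^{\ast}$, not $\Xi_{\varepsilon,\bar z}^{\ast}$. This matters: with your $\bar z$, when $\operatorname{Im}z>0$ one has $\operatorname{Im}\bar z<0$, so Lemma~\ref{Lemma_R_k_eps_exp} does not apply; with $-\bar z$ one has $\operatorname{Im}(-\bar z)=\operatorname{Im}z$, and your remark about the $\alpha$-prescription then goes through verbatim.
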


\begin{proof}
By \eqref{Res_conj}, one has
\[
(R_{\varepsilon z}U_{\varepsilon}^{-1}u)(y)=\varepsilon^{-2}(U_{\varepsilon
}^{-1}R_{z}u)(y)=\varepsilon^{-1/2}(R_{z}u)(y_{0}+\varepsilon(y-y_{0}))
\]
and so, by \eqref{R_k_eps_exp},
\[
\Xi_{\varepsilon,z}u=\gamma_{0}R_{\varepsilon z}U_{\varepsilon}^{-1}%
u-\varepsilon^{-1/2}\left(  R_{z}u\right)  (y_{0})=\varepsilon^{-1/2}%
\gamma_{0}\Psi_{\varepsilon,z}\,.
\]
Hence, whenever $B\supset\overline{\Omega}$, one gets
\[
\left\Vert \Xi_{\varepsilon,z}\right\Vert _{L_{\alpha}^{2}\left(
\mathbb{R}^{3}\right)  ,H^{3/2}\left(  \Gamma\right)  }\leq\Vert\gamma
_{0}\Vert_{H^{2}(B),H^{3/2}\left(  \Gamma\right)  }\,\varepsilon^{-1/2}%
\Vert\Psi_{\varepsilon,z}\Vert_{L_{\alpha}^{2}(B),H^{2}(B)}%
\]
and \eqref{Xi} follows from \eqref{R_k_eps_rem_est_{lap}}. By
\[
\left\langle u,U_{\varepsilon}R_{\varepsilon z}\gamma_{0}^{\ast}%
\phi\right\rangle _{L^{2}\left(  \mathbb{R}^{3}\right)  }=\left\langle
U_{\varepsilon}^{-1}\gamma_{0}R_{-\varepsilon\bar{z}}u,\phi\right\rangle
_{_{H^{3/2}(\Gamma),H^{-3/2}(\Gamma)}}%
\]
and by
\[
\left\langle u,\langle\phi\rangle\mathcal{G}_{z}\right\rangle _{L^{2}\left(
\mathbb{R}^{3}\right)  }=\langle\phi\rangle(R_{z}u)(y_{0})=\left\langle
(R_{-\bar{z}}u)(y_{0}),\phi\right\rangle _{_{H^{3/2}(\Gamma),H^{-3/2}(\Gamma
)}}\,,
\]
one gets
\[
\Phi_{z,\varepsilon}=\Xi_{-\bar{z},\varepsilon}^{\ast}\,
\]
and so \eqref{Phi} is consequence of \eqref{Xi}. \hfill
\end{proof}

\begin{lemma}
\label{Lemma_S_DN} For any $z\in\mathbb{C}$, one has
\[
S_{z}D\!N_{z}=Q_{0}\left(  \frac{1}{2}+K_{0}\right)  Q_{0}+K_{(2)}%
\,z^{2}+O(|z|^{3})\,.
\]
%where
%\begin{equation}
%\left\Vert T_{z}\right\Vert _{H^{1/2}(\Gamma),H^{1/2}(\Gamma)    }\leq C_{R}\,|z|^{3}\,.
%\label{S_DN_eps_k_exp_rem}%
%\end{equation}

\end{lemma}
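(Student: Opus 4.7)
The strategy is purely algebraic: combine the factorization of $D\!N_z$ through $S_z$ with the analytic expansion of $K_z$ from Lemma~\ref{anK} and the spectral decomposition of $K_0$ in \eqref{K_0_spectral_decomp}. First I would use the identity \eqref{DN_k_id}, namely $D\!N_z=S_z^{-1}(1/2+K_z)$, which immediately gives
\[
S_z D\!N_z=\tfrac{1}{2}+K_z\,,
\]
reducing the statement to an assertion about $\tfrac12+K_z$ alone. This is valid for $z\in\mathbb{C}\setminus D_\Omega$; the extension to all $z\in\mathbb{C}$ is harmless since $K_z$ is entire by Lemma~\ref{anK}.

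Next I would invoke the series expansion \eqref{tqq}, observing that the coefficient $K_{(1)}$ is absent because $\mathcal{G}_{(1)}(x)=\frac{i}{4\pi}$ is constant (cf.~\eqref{gexp}), so its surface gradient vanishes in the integral kernel \eqref{Kn}; hence
\[
K_z=K_0+K_{(2)}z^2+O(|z|^3)
\]
in $\mathcal{L}(H^{1/2}(\Gamma))$, uniformly on any bounded subset of $\mathbb{C}$, thanks to the Hilbert--Schmidt convergence proved in Lemma~\ref{anK}.

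The final step rewrites $\tfrac12+K_0$ using \eqref{K_0_spectral_decomp}: since $K_0=-\tfrac12 P_0+Q_0K_0Q_0$ and $Q_0=1-P_0$ is a projector, one has
\[
\tfrac{1}{2}+K_0=\tfrac{1}{2}(1-P_0)+Q_0K_0Q_0=\tfrac{1}{2}Q_0^2+Q_0K_0Q_0=Q_0\!\left(\tfrac{1}{2}+K_0\right)\!Q_0\,.
\]
Inserting this identity and the $K_z$-expansion into $S_zD\!N_z=\tfrac12+K_z$ yields the claimed formula. There is no serious obstacle: the only point requiring a moment of care is the vanishing of the linear-in-$z$ term, which follows directly from \eqref{Kn} with $n=1$, and the absorption of the constant $1/2$ into the projector $Q_0$ via $Q_0^2=Q_0$.
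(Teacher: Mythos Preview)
Your proof is correct and follows exactly the same route as the paper: apply \eqref{DN_k_id} to reduce $S_zD\!N_z$ to $\tfrac12+K_z$, expand via Lemma~\ref{anK}, and then invoke the spectral decomposition \eqref{K_0_spectral_decomp}. Your additional remarks on why $K_{(1)}=0$ and how $\tfrac12+K_0$ collapses to $Q_0(\tfrac12+K_0)Q_0$ are helpful elaborations, but the underlying argument is identical.
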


\begin{proof}
By \eqref{DN_k_id}, there follows $S_{z}D\!N_{z}=\left(  1/2+K_{z}\right)  $.
By \eqref{anK}, one has
\[
S_{z}D\!N_{z}=\frac{1}{2}+K_{0}+K_{(2)}\,z^{2}+O(|z|^{3})\,.
\]
The proof is then concluded by \eqref{K_0_spectral_decomp}.\hfill
\end{proof}

\bigskip

\end{document}